\theoremstyle{plain}
\newtheorem{tw}{Theorem}[section]
\newtheorem{lem}[tw]{Lemma}
\newtheorem{propn}[tw]{Proposition}
\newtheorem{cor}[tw]{Corollary}
\theoremstyle{definition}
\newtheorem{deft}[tw]{Definition}
\newcommand\Ind{\mathcal{I}}
\newcommand{\br}{\mathbb{R}}
\newcommand{\bn}{\mathbb{N}}
\newcommand{\bc}{\mathbb{C}}
\newcommand{\ltwoH}{L^2(\mlg,\varphi)}
\newcommand{\Dom}{\textup{Dom }}
\newcommand\ltwo{\mathrm{L}^2}
\newcommand{\G}{\mathbb{G}}
\newcommand{\ot}{\otimes}
\newcommand{\id}{\mathrm{id}}
\newcommand{\QG}{\G}
\newcommand{\hQG}{\widehat{\QG}}
\newcommand{\wt}{\widetilde}
\newcommand{\Hil}{\mathsf{H}}
\newcommand{\clg}{\mathsf{C}}
\newcommand{\Four}{\mathcal{F}}
\newcommand{\mlg}{\mathsf{M}}
\newcommand{\cM}{\mathsf{M}}
\newcommand{\cN}{\mathsf{N}}
\newcommand{\nphi}{\mathfrak{n}_{\varphi}}
\newcommand{\mphi}{\mathfrak{m}_{\varphi}}
\newcommand{\Lin}{\tu{Lin} \,}
\newcommand{\la}{\langle}
\newcommand{\ra}{\rangle}
\newcommand{\Tr}{\tu{Tr}}
\newcommand{\hqu}{D_{\varphi}^{\frac{1}{4}}}
\newcommand{\hsq}{D_{\varphi}^{\frac{1}{2}}}
\numberwithin{equation}{section}
\newenvironment{rlist}
{

\begin{enumerate}}
{\end{enumerate}}
\newcommand{\Ww}{\mathds{W}}
\newcommand{\wW}{\text{\reflectbox{$\Ww$}}\:\!}
\newcommand{\tu}{\textup}
\begin{document}

\title[Haagerup property via quantum Markov semigroups]{The  Haagerup approximation property for von Neumann algebras via quantum Markov semigroups and Dirichlet forms}
\begin{abstract}
The Haagerup approximation property for a von Neumann algebra equipped with a faithful normal state $\varphi$ is shown to imply existence of unital, $\varphi$-preserving and KMS-symmetric approximating maps. This is used to obtain a characterisation of the Haagerup approximation property via quantum Markov semigroups (extending the tracial case result due to Jolissaint and Martin) and further via quantum Dirichlet forms.
\end{abstract}

\author{Martijn Caspers}
 \address{M. Caspers, Fachbereich Mathematik und Informatik der Universit\"at M\"unster,
Einsteinstrasse 62,
48149 M\"unster, Germany}
 \email{martijn.caspers@uni-muenster.de}
\thanks{MC is supported by the grant SFB 878 ``{\it Groups, geometry and actions}''}

\author{Adam Skalski}
\address{Institute of Mathematics of the Polish Academy of Sciences,
ul.~\'Sniadeckich 8, 00--956 Warszawa, Poland \newline \indent Faculty of Mathematics, Informatics and Mechanics, University of Warsaw, ul.~Banacha 2,
02-097 Warsaw, Poland}
\thanks{AS is partially supported by the Iuventus Plus grant IP2012 043872.}
\email{a.skalski@impan.pl}

\keywords{Haagerup property, von Neumann algebra, Markov semigroup, Dirichlet form}
\subjclass[2010]{Primary 46L10, Secondary 46L55}

\maketitle


The  Haagerup approximation property for a finite von Neumann algebra $\mlg$ equipped with a faithful normal tracial state $\tau$, motivated by the celebrated Haagerup property for discrete groups (see \cite{book}), was introduced in \cite{Cho}. It asserts the existence of  a family of completely positive, normal, $\tau$-non-increasing  maps on $\mlg$ whose $L^2$-implementations are compact and converge  strongly to the identity.  This property was later studied in depth by P.\,Jolissaint (\cite{Jol}), who showed that it does not depend on the choice of the trace, and that the approximating maps can be chosen unital and trace preserving. If $\Gamma$ is a group with the Haagerup property it follows from the characterisation via a conditionally negative definite function $\psi$ on $\Gamma$ that the corresponding approximating maps for the von Neumann algebra of $\Gamma$ -- which is known to have the Haagerup approximation property -- can be chosen so that they form a semigroup (it is a semigroup of Schur multipliers associated with the positive definite functions arising from $\psi$ by the Sch\"onberg correspondence). In \cite{JM} Jolissaint and F.\,Martin, inspired by \cite{Sau}, showed that this is in fact true in the abstract finite von Neumann algebra setup -- the maps in the definition of the Haagerup approximation property can  always  be chosen so that they form a so-called quantum Markov semigroup on $(\mlg, \tau)$. It is worth noting that the Haagerup property for groups has turned out to play a fundamental role in N.\,Higson and G.Kasparov's approach to Baum-Connes conjecture (\cite{HK}) and the corresponding property for  von Neumann algebras features prominently in S.\,Popa's deformation-rigidity program for II$_1$ factors (\cite{Po2}).

Recent years brought a lot of interest in the Haagerup property for discrete (and locally compact) \emph{quantum groups} (see \cite{DawFimSkaWhi} and references therein). This in turn inspired investigations in the extensions of the Haagerup approximation property for arbitrary von Neumann algebras and two such generalisations were proposed, respectively in \cite{OkaTom} and \cite{CasSka}. Soon it turned out that they are equivalent (we refer to the original preprints and to the note \cite{COST} summarising their content for details of the definitions, see also Section 4 of the current paper). It needs to be noted that the non-tracial context allows far more freedom in choosing a pertinent definition: in particular the relation between the algebra $\mlg$ and the corresponding $L^2$-space becomes subtler. The rich choice of embeddings of $\mlg$ into $L^2(\mlg)$ plays a fundamental role in \cite{OkaTom2}, where the authors also connect the Haagerup property of the von Neumann algebra to the analogous property for the associated $L^p$-spaces.

In the current article we show that in the case of a von Neumann algebra $\mlg$ equipped with a faithful normal state $\varphi$, which has the Haagerup approximation property, one can always choose the approximating maps to be unital and $\varphi$-preserving (in other words \emph{Markov}). This is perhaps surprisingly rather technical, and the most difficult part of the proof is showing that the maps can be chosen to be contractive (Lemma \ref{Lem=Contractive}). Keeping the question of quantum Markov semigroup characterisation in mind, we work in the context of symmetric embeddings, closer in spirit to that of \cite{OkaTom} than to that of \cite{CasSka}. Note that the correspondence between different choices of embeddings was studied in detail in \cite{OkaTom2}, but only under the assumptions that the approximating maps are contractive from the beginning. We would also like to recall that the possibility of choosing approximation built of Markov maps was earlier obtained in \cite{OkaTom} under the additional assumption that $\mlg$ has the \emph{modular Haagerup property} (see Section \ref{Sect=Equiv}).

The fact that the approximating maps can be chosen Markov, and in fact also \emph{KMS-symmetric}, has several applications. First it implies that the Haagerup property of von Neumann algebras with faithful normal states is stable under taking von Neumann algebraic free products, the fact due to F.\,Boca \cite{Boca} in the tracial case and to R.\,Okayasu and R.\,Tomatsu \cite{OkaTom} for the modular Haagerup property. Further it allows us to follow the path (and techniques -- although we promote the Hilbert space point of view) of \cite{JM} and show that  $\mlg$ has the Haagerup approximation property if and only if it admits an \emph{immediately $L^2$-compact KMS-symmetric Markov semigroup}. Here the inspiration comes again from the quantum group world, where \cite{DawFimSkaWhi} shows that the von Neumann algebra of a discrete quantum group with the Haagerup property admits approximating maps forming a semigroup of Schur multipliers (associated to a certain convolution semigroup of states on the dual compact quantum group). We then take one further step and characterise in Theorem \ref{Thm:DirichletHAP} the Haagerup approximation property by the existence of suitable \emph{quantum Dirichlet forms} \`a la S.\,Goldstein and J.M.\,Lindsay (\cite{GL1}) and F. Cipriani (\cite{Cip}). This on one hand opens the perspective towards constructing natural derivations on the von Neumann algebras with the Haagerup approximation property, using the techniques of \cite{CipSau}, and on the other once again connects us to the recent quantum group studies, this time those conducted in by Cipriani, U.\,Franz and A.\,Kula in \cite{FabioUweAnna}, where convolution semigroups of states and related Dirichlet forms were used to construct certain Dirac operators. Finally note that, pursuing the discrete group analogy and remembering that a Dirichlet form may be viewed as an incarnation of the generator of the associated Markov semigroup, we can interpret quantum Dirichlet forms satisfying conditions of Theorem \ref{Thm:DirichletHAP} as abstract von Neumann algebraic counterparts of proper conditionally negative definite functions.

The plan of the article is as follows: in Section 1 we recall the relevant facts concerning crossed products and Haagerup $L^p$-spaces and introduce the necessary notation. Section 2 contains a quick introduction of the symmetric Haagerup property of a von Neumann algebra (soon to be shown to be equivalent to the one studied in \cite{CasSka}), which provides a more convenient framework for the study of quantum Markov semigroups. In Section 3, the most technical part of the paper, we show that the approximating maps for the pair $(\mlg, \varphi)$, where $\mlg$ is a von Neumann algebra with the  Haagerup property and $\varphi$ is a faithful normal state on $\mlg$, can be chosen to be Markov. Section 4 explains the equivalence between various possible notions of the Haagerup property (Markovian or not) for  von Neumann algebras and discusses briefly its modular version. Here we also mention the free product result. Section 5 is devoted to characterising the Haagerup property via existence of immediately $L^2$-compact KMS-symmetric Markov semigroups and Section 6 to characterising the Haagerup property via the existence of quantum Dirichlet forms of particular type.

As in \cite{CasSka} we assume that all the von Neumann algebras considered in the paper have separable preduals.

{\bf Acknowledgement.} We would like to thank R.\,Okayasu and R.\,Tomatsu for useful comments and sharing with us
(after this work was completed) a new, forthcoming version of \cite{OkaTom2}, which  offers an alternative path
to some results in the first part of this article. We also express our gratitude to the anonymous referees for
their careful reading of our article and several useful comments.  We thank Narutaka Ozawa for pointing out a
mistake in an earlier version of this paper.

\section{Preliminaries regarding Haagerup $L^p$-spaces and related embeddings} \label{Sect=HaagerupLp}

We briefly recall basic facts and fix notational conventions regarding the Haagerup non-commutative $L^p$-spaces. For details, see \cite{TerpI}; we will also use some notations of \cite{GL1} and \cite{GL2}. Consider a von Neumann algebra $\cM$ with normal, semifinite, faithful weight $\varphi$. Let $\sigma^\varphi$ be the modular automorphism group of $\varphi$ and let $\cN = \cM \rtimes_{\sigma^\varphi} \mathbb{R}$ be the corresponding crossed product. As usual, $\cN$ is generated by operators $\pi(x), x\in \cM$ and $\lambda_t, t\in \mathbb{R}$. In this section we shall omit $\pi$ and identify $\cM$ with its image in $\cN$.  Let $D_\varphi$ be the generator of the left regular representation, i.e.\ a self-adjoint, positive densely defined operator affiliated to $\cN$ such that  $D_{\varphi}^{it} = \lambda_t, t \in \mathbb{R}$. Let $\tilde{\varphi}$ be the dual weight of $\varphi$, which is a weight on $\cN$, and let $\theta$ be the dual action  of $\mathbb{R}$ on $\cN$.  There exists a unique normal, semifinite faithful trace $\tau$ on $\cN$ such that we have $(D \tilde{\varphi}/D\tau)_t = \lambda_t, t \in \mathbb{R}$ (the cocycle derivative). The Haagerup non-commutative $L^p$-space $L^p(\cM, \varphi)$ (for $p\in [1,\infty]$) is now defined as the collection of all $\tau$-measurable operators $x$ affiliated with $\cN$ such that $\theta_s(x) = e^{-s/p}\: x, s\in \mathbb{R}$. We denote by $L^p(\cM, \varphi)_h$ the self-adjoint part of $L^p(\cM, \varphi)$ and refer to \cite{TerpI} for the norm on $L^p(\cM, \varphi)$. If $x \in \mphi$ and $x = \sum_i y_i^\ast z_i$ (finite sum) with $y_i, z_i \in \nphi$ then
\begin{equation}\label{Eqn=ElementOfLp}
D_\varphi^{\frac{1}{2p}}y_i^\ast \cdot [z_i D_\varphi^{\frac{1}{2p}}] \in L^p(\cM, \varphi).
 \end{equation}
In fact such elements form a dense subset. From this point we will simply write $D_\varphi^{\frac{1}{2p}}x D_\varphi^{\frac{1}{2p}}$ for \eqref{Eqn=ElementOfLp}. The operator does not depend on the decomposition of $x$ as a sum $\sum_i y_i^\ast z_i$ (for a precise construction of the map $ x \mapsto D_\varphi^{\frac{1}{2p}}x D_\varphi^{\frac{1}{2p}}$ for infinite $\varphi$ we refer to Section 2 of \cite{GL1}, where it is denoted by $i^{(p)}$).  Note that when $\varphi$ is a normal, faithful \emph{state} then $D_\varphi^{\frac{1}{p}} \in L^p(\cM, \varphi)$ and we are dealing with the standard product of $\tau$-measurable operators. We will in fact be mainly concerned with the situation $p=2$. In particular $J_{\varphi}$ (or simply $J$) will denote the \emph{modular conjugation} acting on $L^2(\cM,\varphi)$ and $\Lambda_{\varphi}$ (or $\Lambda$) the corresponding \emph{GNS embedding}, so that $\Lambda(x) = x D_{\varphi}^{\frac{1}{2}}$ for $x \in \cM$ such that $\varphi(x^*x)< \infty$.

The following result for the state case is Theorem 5.1 of \cite{HaaJunXu}. It can be extended to the case of a normal semifinite faithful weight without much difficulty, see also Remark 5.6 of \cite{HaaJunXu}.

\begin{lem}\label{Lem=L2Interpolation}
Let $\cM, \cN$ be von Neumann algebras with normal, semifinite, faithful weights $\varphi$, respectively $\psi$. Let $\Phi: \cM \rightarrow \cN$ be a positive map such that $\psi \circ \Phi \leq \varphi$. Then there exists a bounded map:
\[
\Phi^{(2)}: L^2(\cM, \varphi) \rightarrow L^2(\cN, \psi): D_\varphi^{\frac{1}{4}} x D_{\varphi}^{\frac{1}{4}} \mapsto D_\psi^{\frac{1}{4}} \Phi(x) D_\psi^{\frac{1}{4}}.
\]
We call $\Phi^{(2)}$ the KMS $L^2$-implementation of $\Phi$.
\end{lem}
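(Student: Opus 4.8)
The plan is to define $\Phi^{(2)}$ by the prescribed formula on the dense subspace of symmetric embeddings and then to obtain its boundedness by complex interpolation between the $L^\infty$- and $L^1$-levels. First I would fix the domain: by \eqref{Eqn=ElementOfLp} (with $p=2$, so $\frac{1}{2p}=\frac14$), the elements $D_\varphi^{\frac14}xD_\varphi^{\frac14}$ with $x\in\mphi$ run over a dense subspace of $L^2(\cM,\varphi)$; in the weight case these are the values of the map $i^{(2)}$ of \cite{GL1} and density is part of that construction. Since $\varphi$ is faithful the symmetric embedding $x\mapsto D_\varphi^{\frac14}xD_\varphi^{\frac14}$ is injective, so $x$ is uniquely recovered from the vector and the formula $D_\varphi^{\frac14}xD_\varphi^{\frac14}\mapsto D_\psi^{\frac14}\Phi(x)D_\psi^{\frac14}$ unambiguously defines a linear map on this subspace; the hypothesis $\psi\circ\Phi\le\varphi$ guarantees that $\Phi(x)$ lies in $\mathfrak{m}_\psi$, so that the right-hand side indeed names an element of $L^2(\cN,\psi)$. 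The entire content is thus the norm estimate.

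For the $L^1$-endpoint I would exploit the domination directly. For $x\in\mphi^+$ one has $\|D_\varphi^{\frac12}xD_\varphi^{\frac12}\|_1=\Tr(D_\varphi x)=\varphi(x)$ and likewise $\|D_\psi^{\frac12}\Phi(x)D_\psi^{\frac12}\|_1=\psi(\Phi(x))\le\varphi(x)$, so the assignment $D_\varphi^{\frac12}xD_\varphi^{\frac12}\mapsto D_\psi^{\frac12}\Phi(x)D_\psi^{\frac12}$ is contractive on the positive cone. As $\Phi$ is positive (hence $\ast$-preserving) and the $L^1$-norm is additive on positive elements, the standard argument that a positive map between preduals attains its norm on positive functionals shows that this extends to a contraction $\Phi^{(1)}\colon L^1(\cM,\varphi)\to L^1(\cN,\psi)$. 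At the $L^\infty$-endpoint there is nothing to prove beyond recalling that a positive map of a unital C$^*$-algebra is automatically bounded, with $\Phi^{(\infty)}=\Phi$ and $\|\Phi\|=\|\Phi(1)\|<\infty$.

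It then remains to interpolate. Kosaki's interpolation theorem, with the \emph{symmetric} embedding, identifies $L^2(\cM,\varphi)=[\cM,L^1(\cM,\varphi)]_{\frac12}$ and $L^2(\cN,\psi)=[\cN,L^1(\cN,\psi)]_{\frac12}$, the midpoint embedding being precisely $x\mapsto D^{\frac14}xD^{\frac14}$ since $\frac14=\tfrac12\cdot\tfrac12+\tfrac12\cdot 0$. Interpolating $\Phi^{(\infty)}$ and $\Phi^{(1)}$ then yields a bounded map at the midpoint, which on the dense subspace is the one described above, with $\|\Phi^{(2)}\|\le\|\Phi\|^{\frac12}\,\|\Phi^{(1)}\|^{\frac12}\le\|\Phi\|^{\frac12}$; this is all the statement asks for. (When $\Phi$ is in addition subunital, i.e.\ $\Phi(1)\le 1$, one gets $\|\Phi\|\le 1$ and hence a genuine contraction.)

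I expect the main obstacle to be the compatibility of the two endpoint maps, i.e.\ exhibiting them as restrictions of a single object on the interpolation couple so that Stein's interpolation theorem applies. Because the power of $D$ in the embedding varies with $p$, the clean way is to build an analytic family $z\mapsto T_z$ on the strip $\{0\le\Re z\le1\}$ by shifting $D_\varphi^{z/2}$ and $D_\psi^{z/2}$ across $\Phi$ using $\sigma^\varphi$- and $\sigma^\psi$-analytic elements, with $T_0$ and $T_1$ realising the two endpoint bounds and $T_{1/2}=\Phi^{(2)}$; the three-lines lemma applied to $z\mapsto\langle T_z\xi,\eta\rangle$ then gives the estimate. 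In the genuinely weight-theoretic case the remaining care is entirely in the unboundedness of $D_\varphi$ and the correct dense domains, which is exactly where one leans on the $i^{(p)}$-formalism of \cite{GL1}, and this is the sense in which the passage from the state case (Theorem~5.1 of \cite{HaaJunXu}) to weights is routine.
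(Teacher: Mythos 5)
Your proposal follows the same route as the paper itself, which simply quotes Theorem 5.1 of \cite{HaaJunXu} for the state case and asserts that the weight case follows from three ingredients: a Radon--Nikodym type result (Proposition 3.2 of \cite{GL2}), the expression of the $L^1$-norm via positive elements (Lemma 5.2 of \cite{HaaJunXu}), and interpolation. Your treatment of the $L^\infty$ endpoint, of the identification of the midpoint of the interpolation couple with the symmetric embedding (Kosaki for states, Terp \cite{TerpII} for weights), and of the compatibility of the endpoint maps is consistent with that scheme. However, there is a genuine gap at the $L^1$ endpoint, and it sits exactly where the two ingredients you do not use are needed.

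The ``standard argument that a positive map between preduals attains its norm on positive functionals'' presupposes an \emph{everywhere-defined} map: one Jordan-decomposes a hermitian functional and applies the positive-cone bound to each part. Your $L^1$-map is defined only on $\{D_\varphi^{\frac{1}{2}} x D_\varphi^{\frac{1}{2}} : x \in \mphi\}$, and this domain is not stable under Jordan decomposition: the positive and negative parts of $D_\varphi^{\frac{1}{2}} x D_\varphi^{\frac{1}{2}}$ correspond formally to $y_\pm = D_\varphi^{-\frac{1}{2}} \bigl(D_\varphi^{\frac{1}{2}} x D_\varphi^{\frac{1}{2}}\bigr)_\pm D_\varphi^{-\frac{1}{2}}$, which in the non-tracial case are in general unbounded, hence not in $\mphi$. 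The only decompositions available inside the domain are operator-level ones, $x = y_1 - y_2$ with $y_i \in \mphi^+$, and cone-contractivity then gives only $\Vert D_\psi^{\frac{1}{2}} \Phi(x) D_\psi^{\frac{1}{2}} \Vert_1 \leq \varphi(y_1) + \varphi(y_2)$; with the natural choice $y_i = x_\pm$ this bound is $\varphi(|x|)$, which is \emph{not} controlled by $\Vert D_\varphi^{\frac{1}{2}} x D_\varphi^{\frac{1}{2}} \Vert_1$. Indeed, already for $\cM = M_2$, $\varphi = \mathrm{tr}(h\,\cdot\,)$ with $h = \mathrm{diag}(1-\epsilon,\epsilon)$ and $x$ the symmetry $\bigl(\begin{smallmatrix} 0 & 1 \\ 1 & 0 \end{smallmatrix}\bigr)$, one has $\varphi(|x|) = 1$ while $\Vert h^{\frac{1}{2}} x h^{\frac{1}{2}} \Vert_1 = 2\sqrt{\epsilon(1-\epsilon)}$, so no uniform constant can come out of this argument (and in infinite dimensions it yields nothing at all). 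What must actually be proved is that the infimum of $\varphi(y_1)+\varphi(y_2)$ over decompositions inside $\mphi^+$ recovers $\Vert D_\varphi^{\frac{1}{2}} x D_\varphi^{\frac{1}{2}} \Vert_1$, i.e.\ that the Jordan decomposition can be approximated from within the domain; equivalently, one realizes $\Phi^{(1)}$ as the pre-adjoint of the everywhere-defined positive normal map $\cN \to \cM$ sending $y$ to the Radon--Nikodym derivative of $\psi_y \circ \Phi$ with respect to $\varphi$, and deduces boundedness from that. This is precisely the role of Proposition 3.2 of \cite{GL2} and Lemma 5.2 of \cite{HaaJunXu} in the paper's sketch; it is the one genuinely nontrivial step of the lemma, and your proposal treats it as routine when it is not.
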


The above statement (suitably reformulated) holds also for other values of $p$. The key steps in the proof are formed by a Radon-Nikodym type result (see for example Proposition 3.2 of \cite{GL2}), the fact that the $L^1$-norm can be expressed via positive elements (Lemma 5.2 of \cite{HaaJunXu}) and interpolation.

\subsection*{In the remainder of the preliminaries assume that $\varphi$ is a normal faithful state}
Given a completely positive normal map $\Phi:\mlg\to \mlg$ which is $\varphi$-reducing (i.e.\ $\varphi \circ \Phi \leq \varphi $)  we can consider its \emph{GNS $L^2$-implementation} $T \in B(\Hil_{\varphi})$, where $\Hil_{\varphi}$ is the GNS Hilbert space for $(\mlg, \varphi)$, given by the formula
\[ T (x\Omega_{\varphi}) = \Phi(x) \Omega_{\varphi}, \;\;\; x \in \mlg,\]
where $\Omega_{\varphi} \in \Hil_{\varphi}$ is the GNS vector of $\varphi$ (note that such implementations were employed in \cite{CasSka}).

The Haagerup space $L^2(\mlg, \varphi)$ is of course naturally isometrically isomorphic to $\Hil_\varphi$, so that in the Haagerup picture the GNS $L^2$-implementation is given by the formula
\[ T( x \hsq ) = \Phi(x) \hsq , \;\;\; x \in \mlg.\]

It is easy to see that if $\Phi$ commutes with the modular automorphism group of $\varphi$ then both $L^2$-implementations coincide.

\begin{deft}
Let $(\mlg,\varphi)$ be a pair of a von Neumann algebra with a faithful normal state. We will say that a map $\Phi:\mlg \to \mlg$ is \emph{Markov}, if it is $\sigma$-weakly continuous (i.e. normal), unital, completely positive and $\varphi$-preserving (note that in fact a variety of versions of this definition appears in literature, often requiring only that for example $\Phi(1)\leq 1$).
\end{deft}

The following result can be read out for example from Section 2 of \cite{GL1} (specifically the discussion before Proposition 2.3 in \cite{GL1} and also Section 4 of \cite{GL2}) -- note that the complete positivity follows from the result for usual positivity, as when we consider the matrix lifting $M_n \ot \mlg$ with the faithful normal state $\textup{tr} \ot \varphi$ we obtain $(\id_n \ot \Phi)^{\dagger} = (\id_n \ot \Phi^{\dagger})$.

\begin{lem}
Let $\Phi:\mlg \to \mlg$ be a Markov map. Then there exists a Markov map $\Phi^{\dagger}:\mlg \to \mlg$ such that for all $x, y \in \mlg$
\begin{equation} \langle \hqu \Phi(x) \hqu, \hqu y \hqu \rangle = \langle \hqu x \hqu, \hqu \Phi^{\dagger}(y) \hqu \rangle. \label{dagger}\end{equation}
\end{lem}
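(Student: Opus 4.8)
The plan is to reduce the statement to a question about the Hilbert space adjoint of the KMS $L^2$-implementation of $\Phi$, and then to recognise that adjoint as itself coming from a map on $\mlg$. First I would apply Lemma \ref{Lem=L2Interpolation} to $\Phi$ (with $\cM = \cN = \mlg$ and $\varphi=\psi$; note that $\varphi\circ\Phi = \varphi\leq\varphi$ since $\Phi$ is $\varphi$-preserving) to obtain the bounded operator $\Phi^{(2)}$ on $\Hil = \ltwoH$ determined by $\Phi^{(2)}(\hqu x \hqu) = \hqu \Phi(x)\hqu$. Writing out the left-hand side of \eqref{dagger} gives $\langle \hqu\Phi(x)\hqu, \hqu y\hqu\rangle = \langle \hqu x\hqu, (\Phi^{(2)})^*(\hqu y\hqu)\rangle$, so \eqref{dagger} holds precisely when $\Phi^\dagger$ is a map whose own KMS $L^2$-implementation equals the Hilbert space adjoint $(\Phi^{(2)})^*$. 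Thus the entire content of the lemma is the existence of a Markov map $\Phi^\dagger$ with $(\Phi^\dagger)^{(2)} = (\Phi^{(2)})^*$; uniqueness is automatic, since $\{\hqu y \hqu : y \in \mlg\}$ is dense in $\Hil$ and the KMS embedding is injective.

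Second, I would record the structural properties of $(\Phi^{(2)})^*$ and check that, once $\Phi^\dagger$ is known to exist, they force it to be Markov. Since $\Phi$ is positive, $\Phi^{(2)}$ preserves the natural positive cone $\mathcal{P} = \ltwoH_+$: for $x\geq 0$ the element $\hqu x \hqu = D_\varphi^{\frac14} x D_\varphi^{\frac14}$ is a positive $\tau$-measurable operator, and such elements are dense in $\mathcal{P}$. Because $\mathcal{P}$ is self-dual, its preservation passes to the adjoint: for $\xi,\eta\in\mathcal{P}$ we have $\langle (\Phi^{(2)})^*\xi,\eta\rangle = \langle\xi,\Phi^{(2)}\eta\rangle\geq 0$, whence $(\Phi^{(2)})^*\mathcal{P}\subseteq\mathcal{P}$. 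Moreover $\Phi(1)=1$ gives $\Phi^{(2)}(\hsq) = \hsq$, while $\varphi$-preservation, written as $\varphi(y) = \langle \hqu y\hqu, \hsq\rangle$, yields $\langle \hqu y\hqu, (\Phi^{(2)})^*\hsq\rangle = \langle\hqu y\hqu,\hsq\rangle$ for all $y$, hence $(\Phi^{(2)})^*\hsq = \hsq$. Consequently any $\Phi^\dagger$ implementing $(\Phi^{(2)})^*$ is automatically positive (cone preservation), unital ($(\Phi^{(2)})^*\hsq=\hsq$) and $\varphi$-preserving ($\Phi^{(2)}\hsq=\hsq$); complete positivity then follows from positivity by the matrix amplification argument indicated before the statement, applied over $M_n\ot\mlg$ with the state $\textup{tr}\ot\varphi$, which gives $(\id_n\ot\Phi)^\dagger = \id_n\ot\Phi^\dagger$.

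The main obstacle is the existence step: showing that the bounded operator $(\Phi^{(2)})^*$, a priori only an operator on $\Hil$, is genuinely the KMS $L^2$-implementation of a \emph{normal} map on $\mlg$, that is, that $(\Phi^{(2)})^*(\hqu y\hqu)$ lies in the image of the embedding $z\mapsto\hqu z\hqu$ for every $y\in\mlg$, and that $y\mapsto z =: \Phi^\dagger(y)$ is $\sigma$-weakly continuous. Here modular theory enters essentially. I would approach this by first producing $\Phi^\dagger$ on the $\sigma$-weakly dense $*$-subalgebra of entire analytic elements for $\sigma^\varphi$: a formal computation of the left-hand side of \eqref{dagger} yields $\langle\hqu\Phi(x)\hqu,\hqu y\hqu\rangle = \varphi\bigl(\Phi(x)\,\sigma^{\varphi}_{-i/2}(y^*)\bigr)$, which suggests a candidate built from the predual map $\Phi_*$ (under the identification $\mlg_*\cong L^1(\mlg,\varphi)$) conjugated by the unbounded analytic modular automorphisms $\sigma^{\varphi}_{\pm i/2}$, to be extended to all of $\mlg$ by normality using the boundedness of $(\Phi^{(2)})^*$ already in hand. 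Controlling these analytic continuations, and verifying that the resulting map is well defined, normal, and independent of the chosen analytic approximations, is exactly the delicate point; this is the content developed in Section~2 of \cite{GL1} and Section~4 of \cite{GL2}, into which the boundedness from Lemma \ref{Lem=L2Interpolation} together with the cone and fixed-vector observations above can be fed to conclude.
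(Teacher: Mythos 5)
Your proposal is correct and takes essentially the same route as the paper: the paper gives no self-contained proof but derives the lemma from Section 2 of \cite{GL1} and Section 4 of \cite{GL2} (precisely the existence/normality step you also defer to those references) and obtains complete positivity by the same matrix-lifting identity $(\id_n\ot\Phi)^{\dagger}=\id_n\ot\Phi^{\dagger}$. Your additional Hilbert-space bookkeeping — the reduction to $(\Phi^{(2)})^*$, preservation of the self-dual cone, and the fixed-vector arguments for $\hsq$ — is accurate and consistent with what those references supply.
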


The map $\Phi^{\dagger}$, uniquely determined by \eqref{dagger}, is called the \emph{KMS-adjoint} of $\Phi$. Using the language of the KMS-implementations  the formula $\eqref{dagger}$ means simply, by density of $\hqu \mlg \hqu$ in $L^2(\mlg, \varphi)$, that  $(\Phi^{(2)})^* = (\Phi^{\dagger})^{(2)}$.

\begin{deft}
We say that a Markov map $\Phi:\mlg \to \mlg$ is \emph{KMS-symmetric} if $\Phi=\Phi^{\dagger}$.
\end{deft}

\begin{lem} \label{KMS2M}
If $T\in B(\ltwoH)$ is a self-adjoint, completely positive operator (the latter in the sense of \cite{OkaTom}, note that the representation of $\mlg$ on $\ltwoH$ is canonically a standard form representation) such that for all $\xi \in \ltwoH$ the inequality $0\leq \xi \leq \hsq$ implies $0\leq T\xi \leq \hsq$ and $T(\hsq) = \hsq$ then there exists a KMS-symmetric Markov operator $\Phi$ on $\mlg$ such that $T= \Phi^{(2)}$.
\end{lem}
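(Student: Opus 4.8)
The plan is to reconstruct $\Phi$ directly from $T$ by exploiting the order structure of $\ltwoH$ and the fact that $\hsq$ is the image of the unit under the KMS embedding $x\mapsto\hqu x\hqu$. I would first record that, once a Markov map $\Phi$ with $T=\Phi^{(2)}$ is produced, KMS-symmetry comes for free: self-adjointness of $T$ gives $(\Phi^{\dagger})^{(2)}=(\Phi^{(2)})^{*}=T^{*}=T=\Phi^{(2)}$, whence $\Phi^{\dagger}=\Phi$ by injectivity of $\Psi\mapsto\Psi^{(2)}$ (which holds because $\hqu\mlg\hqu$ is dense in $\ltwoH$). So the real task is to produce the underlying algebra map.

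The construction of $\Phi$ rests on an identification of order intervals. If $0\le x\le 1$ in $\mlg$ then $\hqu x\hqu\ge0$ and $\hsq-\hqu x\hqu=\hqu(1-x)\hqu\ge0$, so $0\le\hqu x\hqu\le\hsq$; conversely, by the Radon--Nikodym type result (Proposition 3.2 of \cite{GL2}) every $\xi$ with $0\le\xi\le\hsq$ has the form $\xi=\hqu x\hqu$ for a unique $x\in\mlg$ with $0\le x\le1$. Thus $x\mapsto\hqu x\hqu$ is an affine order isomorphism of $[0,1]\subseteq\mlg$ onto $[0,\hsq]\subseteq\ltwoH$. Since $T$ maps $[0,\hsq]$ into itself, I would define $\Phi_0\colon[0,1]\to[0,1]$ by $\hqu\Phi_0(x)\hqu=T(\hqu x\hqu)$; linearity of $T$ and injectivity of the embedding make $\Phi_0$ additive, and extending by positive homogeneity and then linearly yields a positive linear map $\Phi\colon\mlg\to\mlg$ with $\hqu\Phi(x)\hqu=T(\hqu x\hqu)$ for all $x$. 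The hypothesis $T(\hsq)=\hsq$ reads $\Phi(1)=1$, so $\Phi$ is unital (hence contractive). Using $\varphi(y)=\la\hqu y\hqu,\hsq\ra$ together with $T=T^{*}$ and $T\hsq=\hsq$ one gets
\[ \varphi(\Phi(x))=\la\hqu\Phi(x)\hqu,\hsq\ra=\la T(\hqu x\hqu),\hsq\ra=\la\hqu x\hqu,T\hsq\ra=\la\hqu x\hqu,\hsq\ra=\varphi(x), \]
so $\Phi$ is $\varphi$-preserving. For complete positivity I would rerun the same construction over $(M_n(\mlg),\tu{tr}_n\ot\varphi)$: complete positivity of $T$ in the sense of \cite{OkaTom} means precisely that each amplification $\id_n\ot T$ preserves the self-dual positive cone, which is the closure of the image of the positive elements under $X\mapsto D_{\tu{tr}_n\ot\varphi}^{\frac14}XD_{\tu{tr}_n\ot\varphi}^{\frac14}$; since $\id_n\ot T$ acts as the identity on the matrix factor, applying it to such an element returns $D_{\tu{tr}_n\ot\varphi}^{\frac14}(\id_n\ot\Phi)(X)D_{\tu{tr}_n\ot\varphi}^{\frac14}$, and as the embedding is an order isomorphism onto its image this forces $(\id_n\ot\Phi)(X)\ge0$. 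Hence $\Phi$ is completely positive.

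To finish I would verify normality and $T=\Phi^{(2)}$. For normality, let $x_\alpha\nearrow x$ be a bounded increasing net in $\mlg_+$ and let $z$ be the $\sigma$-weak limit of the increasing bounded net $\Phi(x_\alpha)$, so $z\le\Phi(x)$. For fixed $b\in\mlg$ the pairing $\la\hqu x_\alpha\hqu,\hqu b\hqu\ra=\tau(\hsq b^{*}\hsq x_\alpha)$ is the value of $x_\alpha$ against the fixed $L^1$-element $\hsq b^{*}\hsq\in\mlg_{*}$, so the KMS embedding is $\sigma$-weak-to-weak-$L^2$ continuous on bounded sets. Therefore $\hqu x_\alpha\hqu\to\hqu x\hqu$ and $\hqu\Phi(x_\alpha)\hqu\to\hqu z\hqu$ weakly in $\ltwoH$; applying the bounded operator $T$ to the former gives $\hqu\Phi(x_\alpha)\hqu=T(\hqu x_\alpha\hqu)\to\hqu\Phi(x)\hqu$, whence $z=\Phi(x)$ by injectivity, i.e.\ $\Phi$ is normal and thus Markov. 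Finally $T$ and $\Phi^{(2)}$ agree on $\hqu\mlg\hqu$ by construction, and this set is dense in $\ltwoH$, so $T=\Phi^{(2)}$; KMS-symmetry then follows as explained.

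The hard part is the first step: $T$ is only assumed to send the $L^2$-interval $[0,\hsq]$ into itself, and the crux is to know that its values actually lie in the image of the \emph{bounded} part $[0,1]\subseteq\mlg$, so that $\Phi$ is an honest map on $\mlg$. This is exactly the content of the Radon--Nikodym identification of $[0,\hsq]$ with the image of $[0,1]$; everything else (additivity, positivity, unitality, $\varphi$-invariance, complete positivity) is then routine. The verification of normality, which hinges on the $\sigma$-weak-to-weak-$L^2$ continuity of the KMS embedding on bounded sets, is the second point requiring care.
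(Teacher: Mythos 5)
Your proof is correct and takes essentially the same route as the paper: the paper simply outsources the construction to Okayasu--Tomatsu (Lemma 4.1 and the first paragraph of the proof of their Theorem 4.8), whose content is exactly your Radon--Nikodym identification of the order interval $[0,\hsq]$ in $\ltwoH$ with the image of the unit interval of $\mlg$ under $x\mapsto\hqu x\hqu$, and it likewise gets KMS-symmetry from self-adjointness of $T$ and complete positivity by transporting the argument to matrix levels. The additional verifications you write out (additivity, unitality, $\varphi$-invariance, normality via $\sigma$-weak-to-weak $L^2$ continuity, and the density argument giving $T=\Phi^{(2)}$) are precisely what the paper's citation covers.
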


\begin{proof}
The fact that $0\leq \xi \leq \hsq$  implies $0\leq T\xi \leq \hsq$ guarantees the existence of a positive map $\Phi$ on $\mlg$ such that $T= \Phi^{(2)}$, see the proof of Theorem 4.8 in \cite{OkaTom}. The map $\Phi$ (which is $\Phi_n$ in \cite{OkaTom}) is constructed in the first paragraph of the proof of that theorem via Lemma 4.1 of \cite{OkaTom}. Recall that since $T$ is self-adjoint, $\Phi$ is KMS-symmetric. The complete positivity of $T$ allows us to transport this argument to all matrix levels.
\end{proof}

The operators on $\ltwoH$ satisfying the assumptions of the above lemma will be called \emph{KMS-symmetric $L^2$-Markov operators}.

Finally note that for Markov maps $(\Phi_n)_{n=1}^{\infty}$ and another Markov map $\Phi$ the convergence $\Phi_n \stackrel{n\to\infty}{\longrightarrow}\Phi$ point-$\sigma$-weakly coincides with the weak convergence $\Phi_n^{(2)} \stackrel{n\to\infty}{\longrightarrow}\Phi^{(2)}$ -- this follows as then the maps $\Phi_n^{(2)}$ and $\Phi^{(2)}$ are contractions. One can also achieve strong convergence of the $L^2$-implementations, passing, if necessary, to convex combinations of the original maps (see Theorem II 2.6 (IV) in \cite{TakI}). Further using once more contractivity of the maps in question we see that for Markov maps the point-$\sigma$-weak convergence $\Phi_n \stackrel{n\to\infty}{\longrightarrow}\Phi$  is equivalent to the following:
\[ \langle \hqu \Phi_n(x) \hqu, \hqu y \hqu \rangle  \stackrel{n\to\infty}{\longrightarrow} \langle \hqu \Phi(x) \hqu, \hqu y \hqu \rangle, \;\;\; x, y\in \mlg. \]

\section{Definition of the symmetric Haagerup property}\label{Section=SymmetricHAP}

The aim of this section is a short introduction of the symmetric Haagerup property. We will see later in the paper that it is equivalent to the Haagerup property of \cite{COST} (see also \cite{OkaTom2}). We refer to Section \ref{Sect=HaagerupLp}  for conventions on the Haagerup non-commutative $L^2$-space $L^2(\cM, \varphi)$.

\begin{deft}\label{Dfn=HAPSymmetric}
A pair $(\cM, \varphi)$ of a von Neumann algebra $\cM$ with normal, semifinite, faithful weight $\varphi$ is said to have the {\it symmetric Haagerup property} if there exists a sequence $\Phi_k: \cM \rightarrow \cM$ of normal, completely positive maps such that $\varphi \circ \Phi_k \leq \varphi$ and their KMS $L^2$-implementations $\Phi^{(2)}_k \in B(\ltwoH)$  are compact operators converging to the identity of $L^2(\cM, \varphi)$ strongly.
\end{deft}

The difference with the Haagerup property studied in \cite{CasSka} is that Definition \ref{Dfn=HAPSymmetric} uses the {\it symmetric} injection of non-commutative $L^2$-spaces \cite{Kos} instead of the {\it right} injection (KMS embeddings instead of the GNS embeddings).  The usage of KMS embeddings is more suitable for the applications in Section \ref{Sect=Dirichlet}. It is clear that the two possibilities coincide if $\varphi$ is a tracial weight; in fact the two approaches turn out to be equivalent, as follows from results in this paper and \cite{CasSka} (see Section \ref{Sect=Equiv}). It is important to note that in \cite{OkaTom} also the symmetric correspondence between the $L^2$- an $L^\infty$-level was used to establish a suitable notion of the Haagerup approximation property in terms of the standard form of a von Neumann algebra. However, the approach of \cite{OkaTom} starts from completely positive maps (with respect to a positive cone) acting on the standard form Hilbert space $L^2(\cM, \varphi)$ and reconstructs the maps $\Phi_k$ on the von Neumann algebra level from it. It is not clear if the maps $\Phi_k$ behave well with respect to the weight $\varphi$, that is if $\varphi \circ \Phi_k \leq \varphi$ and if the maps $\Phi_k$ can always be chosen contractive. These properties turn out to be crucial in Section \ref{Sect=Dirichlet} and justify a short redevelopment of the theory of \cite{CasSka} for the symmetric Haagerup property.


\section{Markov property of the approximating maps and weight independence}

In this section we prove that the symmetric Haagerup property is in fact equivalent to the `contractive' symmetric Haagerup property,
and deduce from this fact that in the case of a state the approximating maps may be chosen as Markov. Along the same line we prove that
the Haagerup property is independent of the choice of the weight. Let us first recall the following theorem describing the situation in
the tracial case, due to Jolissaint \cite{Jol}.

\begin{tw}[\cite{Jol}]\label{Thm=TracialCase}
Let $\cM$ be a finite von Neumann algebra with normal faithful tracial state $\tau$. If $(\cM, \tau)$ has the Haagerup property, then the completely positive maps $\Phi_k$ of Definition \ref{Dfn=HAPSymmetric} may be chosen unital and trace preserving.
\end{tw}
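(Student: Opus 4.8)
Since $\tau$ is a trace, the GNS and KMS embeddings coincide, so the $L^2$-implementation acts by $\Phi^{(2)}(x\Omega)=\Phi(x)\Omega$ (with $\Omega$ the cyclic trace vector), and the KMS-adjoint $\Phi^{\dagger}$ of \eqref{dagger} is simply the trace-adjoint, determined by $\tau(\Phi(x)^*y)=\tau(x^*\Phi^{\dagger}(y))$; it is again normal and completely positive and satisfies $(\Phi^{(2)})^*=(\Phi^{\dagger})^{(2)}$. A Markov map is exactly a normal completely positive $\Phi$ with $\Phi(1)=1$ and $\Phi^{\dagger}(1)=1$, the latter condition encoding trace preservation. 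The plan is to upgrade the maps $\Phi_k$ of Definition \ref{Dfn=HAPSymmetric} in three steps: first arrange that they are contractive (sub-unital), then symmetrise them so that the unital and the trace-preserving requirements collapse into one, and finally correct them by a trace-weighted rank-one map that restores both conditions while preserving compactness of the $L^2$-implementations.

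First I would make the maps contractive, which is the conceptual crux (the tracial analogue of Lemma \ref{Lem=Contractive}). Set $a_k=\Phi_k(1)\ge 0$; since $\Phi_k^{(2)}\to\mathrm{id}$ strongly we have $a_k\Omega=\Phi_k^{(2)}\Omega\to\Omega$, i.e.\ $a_k\to 1$ in $\|\cdot\|_2$. Choosing $\delta_k\downarrow 0$ suitably and letting $p_k=\mathbf 1_{[0,1+\delta_k]}(a_k)$ be the spectral projection, I would replace $\Phi_k$ by
\[
\Phi_k'(x)=\tfrac{1}{1+\delta_k}\,p_k\Phi_k(x)p_k .
\]
This is normal and completely positive, it is $\tau$-reducing since $\tau(p_k\Phi_k(x)p_k)\le\tau(\Phi_k(x))\le\tau(x)$ for $x\ge 0$, and $\Phi_k'(1)=\tfrac{1}{1+\delta_k}a_kp_k\le 1$, so $\Phi_k'$ is sub-unital. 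Its $L^2$-implementation $\tfrac{1}{1+\delta_k}L_{p_k}R_{p_k}\Phi_k^{(2)}$ is a bounded operator composed with a compact one, hence compact; a Chebyshev estimate gives $\tau(1-p_k)\to 0$, so $p_k\to 1$ strongly and $\Phi_k'^{(2)}\to\mathrm{id}$ strongly, while the interpolation bound $\|a_k(1-p_k)\|_2\le\|a_k-1\|_2+\tau(1-p_k)^{1/2}$ shows $\Phi_k'(1)\to 1$ in $\|\cdot\|_2$. Crucially, sub-unitality now yields, via the Kadison--Schwarz inequality, that each $\Phi_k'^{(2)}$ is a contraction.

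Next I would symmetrise. As $\Phi_k'^{(2)}$ is now a contraction, $(\Phi_k'^{\dagger})^{(2)}=(\Phi_k'^{(2)})^*\to\mathrm{id}$ weakly, so $\Psi_k:=\tfrac12(\Phi_k'+\Phi_k'^{\dagger})$ is a normal completely positive, $\tau$-reducing, sub-unital map with $\Psi_k=\Psi_k^{\dagger}$, whose $L^2$-implementation is self-adjoint, compact and converges weakly to $\mathrm{id}$; passing if necessary to convex combinations (Mazur's theorem, as in Theorem II.2.6 of \cite{TakI}, which preserves all the listed properties) I may take this convergence to be strong. Symmetry is decisive: for $\Psi_k=\Psi_k^{\dagger}$ one has $\tau\circ\Psi_k(x)=\tau(x\,\Psi_k(1))$, so $\Psi_k$ is unital if and only if it is trace-preserving, both being equivalent to $\Psi_k(1)=1$. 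We still have $a_k:=\Psi_k(1)\le 1$ and $a_k\to 1$ in $\|\cdot\|_2$.

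Finally I would remove the residual defect $1-a_k\ge 0$ by the trace-weighted rank-one map
\[
C_k(x)=\frac{\tau\big((1-a_k)x\big)}{\tau(1-a_k)}\,(1-a_k)
\]
(interpreted as $C_k=0$ when $a_k=1$), and set $\Theta_k=\Psi_k+C_k$. As a positive functional times a positive element, $C_k$ is completely positive, hence so is $\Theta_k$; one checks $C_k=C_k^{\dagger}$ with $C_k(1)=1-a_k$, whence $\Theta_k(1)=1$ and $\Theta_k^{\dagger}(1)=a_k+(1-a_k)=1$, so $\Theta_k$ is Markov. The point of this particular correction is that $C_k^{(2)}$ has rank one, its range being $\mathbb{C}\,(1-a_k)\Omega$, hence is compact, so $\Theta_k^{(2)}=\Psi_k^{(2)}+C_k^{(2)}$ remains compact; and since $\|C_k^{(2)}(x\Omega)\|\le\|x\|\,\|1-a_k\|_2\to 0$ on bounded $x$ while $\|C_k^{(2)}\|\le\|1-a_k\|_\infty\le 1$, one obtains $C_k^{(2)}\to 0$ strongly, so $\Theta_k^{(2)}\to\mathrm{id}$ strongly. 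Thus $(\Theta_k)$ witnesses the Haagerup property by Markov maps. The one genuine obstacle is the contractivity step: additive corrections that would fix trace preservation (such as $x\mapsto(1-a)^{1/2}x(1-a)^{1/2}$) destroy compactness, whereas multiplicative normalisations $x\mapsto a^{-1/2}\Phi(x)a^{-1/2}$ cannot respect the unital and the trace-preserving constraints simultaneously; it is precisely the combination of prior symmetrisation with the rank-one choice of $C_k$ that reconciles the two while keeping the implementations compact, and this only becomes available once the maps have been made sub-unital.
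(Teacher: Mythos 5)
Your proof is correct. Be aware, though, that the paper itself never proves Theorem \ref{Thm=TracialCase}: it imports it from Jolissaint \cite{Jol} and uses it as a black box (in Lemma \ref{Lem=ExtraJolissaint} and in Step 1 of Lemma \ref{Lem=MainResult}), so the meaningful comparison is with the paper's own machinery for the harder \emph{state} case, namely Lemma \ref{Lem=MainResult}/Lemma \ref{Lem=Contractive} (contractivity) followed by Proposition \ref{Prop=UnitalPreserving} (Markov correction). Your argument has the same two-phase skeleton but exploits genuinely tracial shortcuts in both phases, plus one twist of your own. For contractivity, your cut-off $\Phi_k'=\tfrac{1}{1+\delta_k}\,p_k\Phi_k(\cdot)p_k$ with $p_k=\chi_{[0,1+\delta_k]}(\Phi_k(1))$ works precisely because the trace has trivial modular theory: the compression has $L^2$-implementation $\tfrac{1}{1+\delta_k}\,p_kJp_kJ\Phi_k^{(2)}$, so compactness survives, the Chebyshev bound $\tau(1-p_k)\le\|\Phi_k(1)-1\|_2^2/\delta_k^2$ gives $p_k\to1$ strongly, and Kadison--Schwarz turns sub-unitality plus $\tau$-reduction into contractivity of the implementations; in the state case the projection $p_k$ would not interact well with $\sigma^\varphi$, and that is exactly what forces the paper into the Gaussian smearing and holomorphic functional calculus of Lemma \ref{Lem=MainResult}. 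For the Markov correction, the paper (Proposition \ref{Prop=UnitalPreserving}) must first extract a Radon--Nikodym element $c_k$ with $\varphi\circ\Phi_k=\varphi_{c_k}$ (Lemma \ref{Lem=NCLP}, which needs Haagerup $L^1$-space arguments) and then add the term $a_k^{1/2}\varphi_{b_k}(\cdot)a_k^{1/2}$; you avoid the Radon--Nikodym step altogether by symmetrising first, since for $\Psi_k=\Psi_k^\dagger$ the density of $\tau\circ\Psi_k$ \emph{is} $\Psi_k(1)$, so a single rank-one perturbation $C_k$ repairs unitality and trace preservation simultaneously (Jolissaint's own argument is closer to the paper's: no symmetrisation, but a correction built from the density of $\tau\circ\Phi_k$, which works because $\tau(\Phi_k(1))=\tau(c_k)$). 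Your route buys something extra: the resulting $\Theta_k$ are KMS-symmetric, so you obtain condition (iv) of Theorem \ref{Thm=Equiv} directly, not merely (v). The few steps you leave implicit --- existence, normality and complete positivity of the trace-adjoint of a sub-unital $\tau$-reducing map, and the fact that the convex combinations of Theorem II.2.6 of \cite{TakI} can be taken from tails so that compactness, symmetry and $\Psi_k(1)\Omega\to\Omega$ all survive --- are standard in the tracial setting, so there is no gap.
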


We will soon need a following simple lemma.

\begin{lem}\label{Lem=ExtraJolissaint}
Let $\cM$ be a semifinite von Neumann algebra with normal semifinite faithful trace $\tau$. If $(\cM, \tau)$ has
the (symmetric) Haagerup property then the approximating cp maps $\Phi_k$ of Definition \ref{Dfn=HAPSymmetric}
may be chosen contractive.
\end{lem}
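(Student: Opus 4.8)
The plan is to reduce the semifinite case to Jolissaint's tracial Theorem \ref{Thm=TracialCase} by an exhaustion/renormalisation argument, and otherwise to directly rescale the given approximating maps so that they become contractive on $\cM$ without destroying the defining properties of the symmetric Haagerup property.

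First I would recall that we are given cp normal maps $\Phi_k:\cM\to\cM$ with $\tau\circ\Phi_k\leq\tau$ whose KMS $L^2$-implementations $\Phi_k^{(2)}$ are compact and converge strongly to the identity on $L^2(\cM,\tau)$. Since $\tau$ is tracial the KMS and GNS pictures coincide, so $\Phi_k^{(2)}\Lambda(x)=\Lambda(\Phi_k(x))$. The key point is that $\|\Phi_k\|=\|\Phi_k(1)\|$ for a positive map, so contractivity on $\cM$ is equivalent to the bound $\Phi_k(1)\leq 1$. The condition $\tau\circ\Phi_k\leq\tau$ only controls $\Phi_k(1)$ in an averaged sense, not pointwise, so the task is genuinely to improve the maps. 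In the finite case the cleanest route is simply to invoke Jolissaint's theorem: by Theorem \ref{Thm=TracialCase} the maps may be chosen unital and trace preserving, hence in particular contractive (a unital cp map is automatically contractive). Thus the lemma is immediate when $\tau$ is a \emph{finite} trace.

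The remaining work is therefore the genuinely semifinite case, where $\tau$ is not finite and Jolissaint's result does not apply verbatim. Here I would localise: choose an increasing net of projections $p_n\in\cM$ with $\tau(p_n)<\infty$ and $p_n\nearrow 1$ strongly, and cut down to the corners $p_n\cM p_n$, which carry the finite trace $\tau(p_n\cdot p_n)/\tau(p_n)$. The idea is to transfer the Haagerup data to these finite corners, apply the finite-case conclusion there to obtain contractive approximants, and then reassemble. Concretely, one considers the compressions $x\mapsto p_n\Phi_k(x)p_n$ (or the induced maps on the corner) and checks that their $L^2$-implementations are still compact and approximate the identity on the increasing subspaces $\Lambda(p_n\cM p_n)$; a diagonal argument over $k$ and $n$ then produces a single sequence of contractive cp maps with compact, strongly-convergent $L^2$-implementations. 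An alternative, more hands-on approach that avoids exhaustion is to renormalise directly: replace $\Phi_k$ by $c_k\Phi_k$ where $c_k=\min(1,\|\Phi_k(1)\|^{-1})$, or better by $\Phi_k\circ(\text{compression by the spectral projection of }\Phi_k(1)\text{ for }[0,1])$, arranging $\Phi_k(1)\leq 1$ while perturbing $\Phi_k^{(2)}$ only by an operator that is small once $\Phi_k^{(2)}\to\id$ strongly. Both compactness of $\Phi_k^{(2)}$ and the inequality $\tau\circ\Phi_k\leq\tau$ are preserved under such truncations since the correction is a cp contraction.

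The main obstacle I anticipate is controlling the $L^2$-convergence after truncation. Replacing $\Phi_k$ by a truncated map changes $\Phi_k^{(2)}$, and one must verify that the difference tends to zero strongly on $L^2(\cM,\tau)$; this requires knowing that $\|\Phi_k(1)\|\to 1$ (or at least stays bounded with the excess mass vanishing in the relevant topology), which in turn should follow from the strong convergence $\Phi_k^{(2)}\to\id$ applied to vectors approximating $\Lambda(1)$ — a subtle point precisely when $\tau(1)=\infty$ and $\Lambda(1)\notin L^2$. Handling this carefully, through the exhausting projections $p_n$ where $\tau(p_n)<\infty$ so that $\Lambda(p_n)$ genuinely lives in $L^2$, is where the real care is needed; everything else is a routine check that the truncated maps remain normal, completely positive, and $\tau$-reducing.
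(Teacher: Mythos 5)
Your main route --- exhausting $\cM$ by $\tau$-finite projections $p_n\nearrow 1$, compressing the given maps to the finite-trace corners $p_n\cM p_n$, invoking Theorem \ref{Thm=TracialCase} there to get unital (hence contractive) approximants, and reassembling via $x\mapsto\Phi'_{k,n}(p_n x p_n)$ with a diagonal argument over $k,n$ --- is exactly the paper's proof. Your secondary ``direct renormalisation'' idea is the one the paper avoids, and you correctly diagnose why it is problematic when $\tau(1)=\infty$; the exhaustion argument you put first is the right (and the paper's) one.
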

\begin{proof}
Let $\{ e_n \}_{n \in \mathbb{N}}$ be a sequence of $\tau$-finite projections in $\cM$ converging strongly to 1. Let $\Phi_k$ be the approximating cp maps witnessing the Haagerup property of $(\cM, \tau)$. Then $e_n \Phi_k(\: \cdot \:) e_n$ are cp maps witnessing the Haagerup property of $(e_n\cM e_n, e_n \tau e_n)$. Theorem \ref{Thm=TracialCase} shows that we may replace these maps by contractive maps $\Phi'_k$ witnessing the Haagerup property of $(e_n\cM e_n, e_n \tau e_n)$. Then for suitable $k,n$  the maps $\Phi_{k,n}'(e_n \: \cdot \: e_n)$ will form a net of contractive cp maps witnessing the Haagerup property of $(\cM, \tau)$.
\end{proof}

Quite importantly, Theorem \ref{Thm=TracialCase} implies in particular that one can achieve a uniform bound for operators  $\Vert \Phi_k \Vert = \Phi_k(1)$ (specifically these operators may be assumed to be all dominated by $1$).  We can now prove the following lemma which is the key, most technical step in the proof of Theorem \ref{Thm=MainTheorem}.

\begin{lem}\label{Lem=MainResult}
Let $\cM$ be a finite von Neumann algebra equipped with normal faithful tracial state $\tau$. Let $h \in \cM^+$ be boundedly invertible and let $\varphi(\: \cdot \:) =  \tau(h\: \cdot \: h)$. If $(\cM, \varphi)$ has the symmetric Haagerup property, then the completely positive maps $\Phi_k, k \in \mathbb{N}$ in Definition \ref{Dfn=HAPSymmetric} may be chosen contractive.
\end{lem}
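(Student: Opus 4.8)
The plan is to pass to the tracial picture, where Theorem~\ref{Thm=TracialCase} and Lemma~\ref{Lem=ExtraJolissaint} are available, produce nice contractive tracial approximants there, and transport them back to $(\cM,\varphi)$. The genuine difficulty — as the paper itself announces — is that the natural transport yields only \emph{uniformly bounded} maps, and upgrading a uniform bound to honest contractivity (while keeping $\varphi$-reduction) is the real content.

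First I would set up the dictionary. Since $\varphi(\,\cdot\,)=\tau(h\,\cdot\,h)=\tau(h^2\,\cdot\,)$ with $h\in\cM^+$ bounded and boundedly invertible, the density is $D_\varphi=h^2$, so that $\hqu=h^{1/2}$ and $\hsq=h$; the Haagerup space $L^2(\cM,\varphi)$ is isometric to the tracial $L^2(\cM,\tau)$ via $D_\varphi^{1/4}xD_\varphi^{1/4}=h^{1/2}xh^{1/2}$, and the KMS implementation of a $\varphi$-reducing completely positive map $\Phi$ reads $\Phi^{(2)}(h^{1/2}xh^{1/2})=h^{1/2}\Phi(x)h^{1/2}$. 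Write $\M_{h^{s}}$ for the bounded, boundedly invertible operator $\zeta\mapsto h^{s}\zeta h^{s}$ on $L^2(\cM,\tau)$ (its boundedness and invertibility are exactly where $h^{\pm1}\in\cM$ is used). Given approximating maps $\Phi_k$ for $(\cM,\varphi)$ with $\varphi\circ\Phi_k\le\varphi$ and $T_k:=\Phi_k^{(2)}$ compact and converging strongly to the identity, I define the \emph{asymmetric twist} $\hat\Phi_k(y):=h\,\Phi_k(h^{-1}yh^{-1})\,h$. These are normal and completely positive, and they are trace-reducing: for $y\ge0$,
\[
\tau(\hat\Phi_k(y))=\tau\bigl(h^2\Phi_k(h^{-1}yh^{-1})\bigr)=\varphi\bigl(\Phi_k(h^{-1}yh^{-1})\bigr)\le\varphi(h^{-1}yh^{-1})=\tau(y).
\]
A direct computation gives $\hat\Phi_k^{(2)}=\M_{h^{1/2}}\,T_k\,\M_{h^{-1/2}}$ at the tracial $L^2$-level, which is compact (conjugate of a compact operator by bounded operators) and converges strongly to the identity (because the outer factors compose to $\id$). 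Hence $(\cM,\tau)$ has the Haagerup property in the sense of Definition~\ref{Dfn=HAPSymmetric}.

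Now I invoke Theorem~\ref{Thm=TracialCase} (together with Lemma~\ref{Lem=ExtraJolissaint}) to obtain for $(\cM,\tau)$ approximating maps $\Theta_j$ that are unital and trace-preserving, hence contractive, with compact tracial implementations $U_j:=\Theta_j^{(2)}$ converging strongly to the identity. I transport these back by the inverse twist $\check\Phi_j(x):=h^{-1}\Theta_j(hxh)h^{-1}$. These are normal, completely positive, and $\varphi$-preserving, since for $x\ge0$
\[
\varphi(\check\Phi_j(x))=\tau\bigl(h^2h^{-1}\Theta_j(hxh)h^{-1}\bigr)=\tau(\Theta_j(hxh))=\tau(hxh)=\varphi(x),
\]
and their KMS implementations equal $\check\Phi_j^{(2)}=\M_{h^{-1/2}}\,U_j\,\M_{h^{1/2}}$, again compact and strongly convergent to the identity. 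Thus $\check\Phi_j$ already witnesses the symmetric Haagerup property of $(\cM,\varphi)$ — but only with a \emph{uniform} bound: as $\Theta_j$ is unital, $\check\Phi_j(1)=h^{-1}\Theta_j(h^2)h^{-1}\le\|h\|^2\,h^{-2}$, so $\|\check\Phi_j\|\le\|h\|^2\,\|h^{-1}\|^2$, which need not be $\le 1$.

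The main obstacle is precisely this last gap. It cannot be closed by elementary manoeuvres: rescaling $\check\Phi_j$ by $1/\|\check\Phi_j\|$ restores contractivity but destroys convergence to the identity, because the norms $\|\check\Phi_j(1)\|$ need not tend to $1$ (one only has $\check\Phi_j(1)\to 1$ strongly, not in norm); conversely any one-sided normalisation or functional-calculus truncation that forces the value at $1$ below $1$ destroys the $\varphi$-reduction $\varphi\circ\check\Phi_j\le\varphi$, and indeed no single algebraic twist $x\mapsto h^{s}\Theta(h^{t}xh^{t})h^{s}$ can achieve contractivity and $\varphi$-reduction simultaneously for general $h$. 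The root cause is that the similarity $\M_{h^{s}}$ relating the $\tau$- and $\varphi$-pictures is bounded-invertible but \emph{not} unitary, so it preserves neither KMS-symmetry nor the order interval $\{\xi:0\le\xi\le\hsq\}$ that governs contractivity in Lemma~\ref{KMS2M}. The route I would take to overcome this is to perform the normalisation at the $L^2$-level inside the self-dual cone rather than algebraically: starting from the $\varphi$-preserving maps above (and using \eqref{dagger}, so that $(\check\Phi_j^{\dagger})^{(2)}=(\check\Phi_j^{(2)})^{*}$, to exploit that both $\check\Phi_j^{(2)}$ and its adjoint converge strongly to the identity), one truncates the implementations into sub-Markov $L^2$-operators satisfying the order condition of Lemma~\ref{KMS2M}, localising along the spectral decomposition of $h$ into finitely many pieces on which $h$ is nearly constant — so that $\varphi$ is nearly a multiple of $\tau$ and the error terms are controlled uniformly by the bounded invertibility of $h$ — and then reconstructs genuinely contractive completely positive maps, passing to the limit. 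This truncation-and-reconstruction, carried out so as to preserve compactness, strong convergence, and $\varphi$-reduction at once, is the technical heart of the argument.
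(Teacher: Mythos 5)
Your first half reproduces, essentially verbatim, Step 1 of the paper's own proof: the twist $y\mapsto h\Phi_k(h^{-1}yh^{-1})h$ into the tracial picture, Theorem~\ref{Thm=TracialCase} (with Lemma~\ref{Lem=ExtraJolissaint}) there, and the inverse twist back, producing approximants for $(\cM,\varphi)$ that are $\varphi$-reducing (in your version even $\varphi$-preserving) and uniformly bounded by $(\|h\|\|h^{-1}\|)^2$. You also diagnose correctly why this is not yet the lemma and why naive rescaling fails. But everything after ``The route I would take'' is a plan, not an argument, and it is exactly the part the lemma is about. The paper's Step 2 fills this gap with a concrete construction that has no counterpart in your sketch: first smooth along the modular flow, $\Phi_k^l(x)=\sqrt{1/(l\pi)}\int_{-\infty}^{\infty}e^{-t^2/l}\,\sigma_t^\varphi(\Phi_k(\sigma_{-t}^\varphi(x)))\,dt$ (a Bochner integral precisely because $h$ is boundedly invertible, so compactness of the implementations survives), which gives norm control, uniform in $k$, of $\sigma_{-i/4}^\varphi$ and $\sigma_{-i/2}^\varphi$ on $g_k^l:=\Phi_k^l(1)$; then correct \emph{multiplicatively by functional calculus of the value at $1$}, setting $f_k^{n,l}=F_n(g_k^l)$ with $F_n(z)=e^{-n(z-1)^2}$ and $\Psi_j=\frac{1}{(1+\epsilon_j)^2}f_{k(j)}^{n(j),l(j)}\,\Phi_{k(j)}^{l(j)}(\,\cdot\,)\,f_{k(j)}^{n(j),l(j)}$. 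Contractivity then comes from the scalar bound \eqref{Eqn=MaxLambda}, $\varphi$-reduction from the Tomita--Takesaki estimate $\varphi(fyf)\le\|\sigma_{-i/2}^\varphi(f)\|^2\varphi(y)$, and strong convergence of the implementations from a chain of explicit estimates; note in particular that your observation that no twist by powers of $h$ can work is consistent with this, since the correcting operator depends on $\Phi_k^l$ itself, not on $h$.

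Beyond being unexecuted, the two specific ideas in your sketch face concrete obstructions. First, ``truncating the implementations into sub-Markov $L^2$-operators'' is not an available operation: the nearest-point projection onto operators preserving the interval $\{\xi: 0\le\xi\le\hsq\}$ is nonlinear at the level of operators and has no reason to preserve complete positivity, compactness, or strong convergence, while Lemma~\ref{KMS2M} needs all hypotheses simultaneously; you have relocated the original difficulty to the $L^2$ level without solving it. Second, localising along a finite spectral decomposition $1=\sum_i p_i$ of $h$ controls $\varphi$ only on the diagonal corners $p_i\cM p_i$; any construction assembled from the corners implicitly involves the pinching $x\mapsto\sum_i p_ixp_i$, whose $L^2$-implementation is the fixed projection killing every off-diagonal corner $p_i\cM p_j$, $i\neq j$, so the resulting maps cannot converge strongly to the identity (and refining the partition makes this worse, not better). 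Moreover, on those off-diagonal corners the modular group acts nontrivially (roughly by the scalars $(c_i/c_j)^{2it}$), so ``$\varphi$ is nearly a multiple of $\tau$'' fails exactly where the non-tracial difficulty lives. The gap is therefore genuine: the statement to be proved is, in essence, the paper's Step 2, and your proposal does not prove it.
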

\begin{proof} We split the proof into two steps.

\vspace{0.3cm}


\noindent {\bf Step 1:} Firstly, suppose that $\Phi_k$ are the completely positive maps witnessing the symmetric Haagerup property of $(\cM, \varphi)$. Then,
\[
\Psi_k( \: \cdot \:) = h \Phi_k(h^{-1} \: \cdot \: h^{-1} ) h,
\]
witnesses the symmetric Haagerup property for the pair $(\cM, \tau)$ (we leave the details to the reader). Then since we know that $(\cM, \tau)$ has the symmetric Haagerup property, Theorem \ref{Thm=TracialCase} implies that the approximating maps, say this time $\Psi_k'$, may be taken unital and $\tau$-preserving. This implies that,
\[
\Phi_k'(\: \cdot \:) = h^{-1} \Psi_k'(h \: \cdot \: h) h^{-1},
\]
witnesses the Haagerup property for $(\cM, \varphi)$ (again the details are for the reader). The important conclusion is that this means that we may assume that there is a uniform bound in $k$ for the completely positive maps $\Phi_k'$ that witness the symmetric Haagerup property for $(\cM, \varphi)$. Note that indeed an explicit bound is given by $(\Vert h^{-1} \Vert \Vert h \Vert)^2$.

\vspace{0.3cm}

\noindent {\bf Step 2:} Let again $\Phi_k$ be the approximating cp maps witnessing the Haagerup
property of $(\cM, \varphi)$ and by Step 1 assume that $\Vert \Phi_k \Vert$ is uniformly bounded
in $k$. We shall now prove the lemma, that is that   we can take completely positive maps with
$\Phi_k(1) \leq 1$. In order to do so set,
\[
\Phi_k^l(x) = \sqrt{\frac{1}{l \pi  }} \int_{-\infty}^\infty e^{- t^2/l} \sigma_t^{\varphi}(\Phi_k( \sigma_{-t}^\varphi(x  ) )) dt.
\]
Since $\varphi \circ \sigma_t^\varphi = \varphi$ we see that $\varphi \circ \Phi_k^l \leq \varphi$ and its KMS $L^2$-implementation is given by
\begin{equation}\label{Eqn=Tkl}
T_k^l := \sqrt{\frac{1}{l \pi}} \int_{-\infty}^\infty e^{-t^2/l}  h^{it} J h^{it} J \: T_k \: h^{-it} J h^{-it} J \: dt.
\end{equation}
Because $h$ is boundedly invertible the mapping $t \mapsto h^{it}$ is norm continuous as follows from the continuous functional calculus. This implies that \eqref{Eqn=Tkl} is in fact a Bochner integral and hence $T_k^l$ is compact.   Moreover, approximating the integral in norm with  suitable step functions it follows from a $3\epsilon$-argument that for every $l \in \mathbb{N}$ we have $T_k^l \rightarrow 1$ strongly.  Define,
\[
g_k^l := \Phi_k^l(1) = \sqrt{\frac{1}{l \pi}} \int_{-\infty}^\infty e^{-t^2/l} \sigma^\varphi_{t}(\Phi_k(1)) \:dt.
\]
Then $g_k^l$ is analytic for $\sigma^\varphi$ and we have,
\[
\sigma_{-i/4}^\varphi(g_k^l) = \sqrt{ \frac{1}{l \pi} } \int_{-\infty}^{\infty} e^{-(t+i/4)^2/l} \sigma_t^\varphi(\Phi_k(1))\: dt.
\]
We claim that we have,
\begin{equation}\label{Eqn=ClaimNormAnalytic}
\Vert \sigma_{-i/4}^\varphi (g_k^l) - g_k^l \Vert \rightarrow 0 \qquad \textrm{ as } \qquad  l \rightarrow \infty.
\end{equation}
Moreover, this convergence is uniform in $k$. Indeed, let $\epsilon >0$ be arbitrary. Choose $\delta> 0$ such that for every $t \in (-\delta, \delta)$ we have $\vert 1 - e^{it/2} \vert < \epsilon$. Since,
\[
2 \sqrt{\frac{1}{l \pi}} \int_{\delta l}^\infty e^{-t^2/l} dt = \frac{2}{\sqrt{\pi}} \int_{\delta \sqrt{l}}^\infty e^{-s^2} ds \rightarrow 0,
\]
as $l \rightarrow \infty$ we may choose $l \in \mathbb{N}$ such that
\[
2 \sqrt{\frac{1}{l \pi}} \int_{\delta l}^\infty e^{-t^2/l} dt < \epsilon \quad \textrm{ and } \quad \vert e^{1/16l} - 1 \vert < \epsilon.
\]
Then we find the following estimates,
\[
\begin{split}
 \Vert \sigma_{-i/4}^\varphi(g_k^l) - g_k^l \Vert \leq & \sqrt{\frac{1}{l \pi}} \int_{-\infty}^\infty \vert  e^{-(t+i/4)^2/l } - e^{-t^2/l} \vert \: dt\: \Vert \Phi_k(1) \Vert \\
\leq & \sqrt{\frac{1}{l \pi}} \int_{-\delta l}^{\delta l} e^{-t^2/l} \vert e^{-(it/2-1/16)/l } -1 \vert \: dt \:\Vert \Phi_k(1) \Vert  \\
& + \:\:  \sqrt{\frac{1}{l \pi}} \int_{(-\infty, -\delta l) \cup (\delta l, \infty)} \!\!\!\!\!\!\!\!\!\!\!\!\!\!\! e^{-t^2/l} \vert e^{-(it/2-1/16)/l } -1 \vert \: dt \:\Vert \Phi_k(1) \Vert.
\end{split}
\]
By our assumptions, the integral on the last line is smaller than $\epsilon (2+ \epsilon) \Vert \Phi_k(1) \Vert$. The integral before the last line is smaller than $\Vert \Phi_k(1) \Vert$ times the maximum over $t \in [-\delta l, \delta l]$ of $\vert e^{-(it/2-1/16)/l} - 1 \vert \leq \vert e^{-it/2l} (e^{1/16l} -1) \vert + \vert e^{-it/2l} -1 \vert$, which by assumption can be estimated from above  by $2 \epsilon \Vert \Phi_k(1) \Vert$.  This proves that \eqref{Eqn=ClaimNormAnalytic} holds. Note that the convergence is uniform in $k$ because we assumed that the maps $\Phi_k$ were uniformly bounded. With some trivial modifications of the argument above we can also prove the following convergence result, which holds uniformly in $k \in \mathbb{N}$:
\begin{equation}\label{Eqn=ClaimNormAnalyticII}
\Vert \sigma_{-i/2}^\varphi (g_k^l) - g_k^l \Vert \rightarrow 0 \qquad \textrm{ as } \qquad  l \rightarrow \infty.
\end{equation}

Next, let $F_n (z) = e^{-n (z-1)^2}$ with $z\in \mathbb{C}$. In particular $F_n$ is holomorphic. Set $f_k^{n,l} = F_n(g_k^l)$. From holomorphic functional calculus applied to the Banach algebra of $\cM$-valued functions on the strip $\mathcal{S} = \{ z \in \mathbb{C} \mid -1/2 \leq {\rm Im}(z) \leq 0 \}$ that are analytic on the interior and norm-continuous on its boundaries we see that $z \mapsto \sigma_z^{\varphi}(f_k^{n,l}) ( = F_n(z \mapsto \sigma_z^\varphi(g_k^l)))$ must extend to a function that is analytic on the interior of $\mathcal{S}$ and continuous on its boundaries.

We are now ready to prove that there exists a sequence of completely positive maps $\Psi_{j}: \cM \rightarrow \cM$ that witness the symmetric Haagerup property of $\cM$ and such that $\Psi_{j}(1) \leq 1$.  Let $\epsilon_j = \frac{1}{2^j}$ and let $\{ F_j \}_{j \in \mathbb{N}}$ be an increasing sequence of finite sets in the square of the Tomita algebra $\mathcal{T}_\varphi^2$ such that the union over $j \in \mathbb{N}$ of $ D_\varphi^{\frac{1}{4}} F_j D_\varphi^{\frac{1}{4}}$ is dense in $L^2(\cM, \varphi)$. Choose $n(j) \in \mathbb{N}$ such that for every $n' \geq n(j)$ we have,
\begin{equation}\label{Eqn=MaxLambda}
\max_{\lambda \geq 0} \lambda e^{-2n' (\lambda-1)^2} \leq 1 + \epsilon_j.
\end{equation}
If this property holds for $n$, then it automatically holds for $n' \geq n$. Let $C_n$ be a constant such that,
\begin{equation}\label{Eqn=CnConstant}
(e^{-n(\lambda -1)^2 } - 1)^2 \leq C_n (\lambda -1)^2,
\end{equation}
(an elementary check of the derivatives of these functions at $\lambda = 1$ implies that such a constant  indeed exists).

The spectrum $\Sigma(g_k^l)$ of $g_k^l \in \cM$ is real and the union $\cup_{k,l} \Sigma(g_k^l)$ is contained in a compact set since $g_k^l$ is uniformly bounded in $k$ and $l$. From \eqref{Eqn=ClaimNormAnalytic}, \eqref{Eqn=ClaimNormAnalyticII} and the fact that the invertible elements form an open set in $\cM$ it follows  that for each $\delta > 0$ we may find $l \in \mathbb{N}$ such that for every $k \in \mathbb{N}$ the imaginary parts of the spectra $\Sigma(  \sigma^\varphi_{-i/2}(g_k^l) )$ and  $\Sigma(  \sigma^\varphi_{-i/4}(g_k^l) )$ are contained in $[- \delta, \delta]$. The limits \eqref{Eqn=ClaimNormAnalytic} and \eqref{Eqn=ClaimNormAnalyticII} together with continuity of (the power series of) $F_n$ imply that
there exists an $l := l(j) \in \mathbb{N}$ such that for every $k \in \mathbb{N}$,
\begin{equation}\label{Eqn=AnalyticEstimate}
\begin{split}
\Vert \sigma_{-i/4}^\varphi( f_k^{n(j), l}  ) \Vert \leq 1 +  \epsilon_j,& \qquad \Vert \sigma_{-i/4}^\varphi( f_k^{n(j), l}  ) - f_k^{n(j), l}   \Vert \leq  \epsilon_j, \\
 \Vert \sigma_{-i/2}^\varphi( f_k^{n(j), l}  ) \Vert \leq 1 +  \epsilon_j,& \qquad \Vert \sigma_{-i/4}^{\varphi}( g_{k}^{l} ) - g_k^l \Vert \leq \frac{\epsilon_j}{\sqrt{C_{n(j)}}}.
\end{split}
\end{equation}
Since for every $l \in \mathbb{N}$, we have  $T_k^l \rightarrow 1$ strongly in $k$, we  can choose $k := k(j)$ such that for every $x \in F_j$,
\[
\begin{split}
\Vert (T_{k(j)}^{l(j)} -1 ) D_\varphi^{\frac{1}{4}}xD_\varphi^{\frac{1}{4}}\Vert_2 \leq \epsilon_j, \qquad
 \Vert (T_{k(j)}^{l(j)} - 1) D_\varphi^{\frac{1}{2}} \Vert_2 \leq & \frac{\epsilon_j}{\sqrt{C_{n(j)}}}.
\end{split}
\]

Now, set
\[
\Psi_j( \: \cdot \:) = \frac{1}{(1+  \epsilon_j)^2} f_{k(j)}^{n(j), l(j)} \Phi_{k(j)}^{l(j)}(\: \cdot \:) f_{k(j)}^{n(j), l(j)}.
\]
By \eqref{Eqn=MaxLambda} we have $\Psi_{j}(1) \leq \frac{1+\epsilon_j}{(1+ \epsilon_j)^2} \leq 1$. Also, for $x \in \cM^+$ the estimate \eqref{Eqn=AnalyticEstimate} yields the following estimate,  which uses in the first inequality  a standard application of Tomita-Takesaki theory (see \cite[Lemma 2.5]{HaaJunXu} for a proof),
\begin{equation} \label{Eqn=Weightreducing}
\varphi \circ \Psi_j(x) \leq  \frac{1}{(1+   \epsilon_j)^2}  \Vert \sigma^\varphi_{-i/2}( f_{k(j)}^{n(j), l(j)}) \Vert^2 \varphi \circ \Phi_{k(j)}^{l(j)}(x) \leq    \varphi (x).
\end{equation}
The KMS $L^2$-implementation of $\Psi_j$ is given by
\[
S_j:= \frac{1}{(1+  \epsilon_j)^2} \sigma_{-i/4}^\varphi( f_{k(j)}^{ n(j), l(j) }) J \sigma_{-i/4}^\varphi(f_{k(j)}^{n(j), l(j)}) J T_{k(j)}^{l(j)},
\]
which is compact and its norm is uniformly bounded in $j$ (this follows from $\Psi_j(1) \leq 1$ and the Kadison-Schwarz inequality). We shall now prove that $S_j \rightarrow 1$ strongly.   We  estimate,
\begin{equation}\label{Eqn=HugeEstimate}
\begin{split}
& \Vert \frac{1}{(1+  \epsilon_j)^2}  \sigma_{-i/4}^\varphi( f_{k(j)}^{n(j), l(j)}) J \sigma_{-i/4}^\varphi( f_{k(j)}^{n(j), l(j)} ) J T_{k(j)}^{l(j)} D_\varphi^{\frac{1}{4}}xD_\varphi^{\frac{1}{4}} -  D_\varphi^{\frac{1}{4}}xD_\varphi^{\frac{1}{4}} \Vert_2 \\
\leq & \Vert \frac{1}{(1+ \epsilon_j)^2} \sigma_{-i/4}^\varphi(f_{k(j)}^{n(j), l(j)}) J \sigma_{-i/4}^\varphi( f_{k(j)}^{n(j), l(j)} ) J T_{k(j)}^{l(j)}  D_\varphi^{\frac{1}{4}}xD_\varphi^{\frac{1}{4}} \\
 &\qquad-  \sigma_{-i/4}^\varphi(f_{k(j)}^{n(j), l(j)}) J \sigma_{-i/4}^\varphi( f_{k(j)}^{n(j), l(j)} ) J T_{k(j)}^{l(j)}  D_\varphi^{\frac{1}{4}}xD_\varphi^{\frac{1}{4}} \Vert_2 \\
& + \Vert  \sigma_{-i/4}^\varphi( f_{k(j)}^{n(j), l(j)}) J \sigma_{-i/4}^\varphi( f_{k(j)}^{n(j), l(j)} ) J T_{k(j)}^{l(j)} D_\varphi^{\frac{1}{4}}xD_\varphi^{\frac{1}{4}}  \\
&\qquad -  \sigma_{-i/4}^\varphi(f_{k(j)}^{n(j), l(j)}) J \sigma_{-i/4}^\varphi( f_k^{n(k), l(k)} ) J   D_\varphi^{\frac{1}{4}}xD_\varphi^{\frac{1}{4}} \Vert_2 \\
&  + \Vert  \sigma_{-i/4}^\varphi(f_{k(j)}^{n(j), l(j)}) J \sigma_{-i/4}^\varphi( f_{k(j)}^{n(j), l(j)} ) J   D_\varphi^{\frac{1}{4}}xD_\varphi^{\frac{1}{4}}  -  \sigma_{-i/4}^\varphi( f_{k(j)}^{n(j), l(j)})  D_\varphi^{\frac{1}{4}}xD_\varphi^{\frac{1}{4}} \Vert_2 \\
&  + \Vert  \sigma_{-i/4}^\varphi( f_{k(j)}^{n(j), l(j)}) D_\varphi^{\frac{1}{4}}xD_\varphi^{\frac{1}{4}}   -  D_\varphi^{\frac{1}{4}}xD_\varphi^{\frac{1}{4}} \Vert_2.
\end{split}
\end{equation}
The first summand on the right hand side converges to 0 as $j \rightarrow \infty$, since $\Vert T_{k(j)}^{l(j)} \Vert$ and $\|\sigma_{-i/4}^\varphi( f_{k(j)}^{n(j), l(j)} )\|$ are all bounded in $j$ and $\frac{1}{(1+ \epsilon_j)^2} \rightarrow 1$. Also, the second summand converges to 0 by our choice of $k(j)$. It remains to estimate the latter two summands.

By the assumption \eqref{Eqn=CnConstant} we have $(f_{k(j)}^{n(j), l(j)} - 1)^2 \leq C_{n(j)} (\Phi_{k(j)}^{l(j)}(1)  - 1)^2$. So,
\begin{equation}\label{Eqn=FirstEQN}
\begin{split}
& \Vert \sigma_{-i/4}^\varphi(f_{k(j)}^{n(j), l(j)} ) J \sigma_{-i/4}^{\varphi}(f_{k(j)}^{n(j),l(j)} ) J D_\varphi^{\frac{1}{4}} x D_\varphi^{\frac{1}{4}} - \sigma_{-i/4}^\varphi(f_{k(j)}^{n(j), l(j)}) D_\varphi^{\frac{1}{4}} x D_\varphi^{\frac{1}{4}} \Vert_2 \\ \leq & (1+ \epsilon_j) \Vert D_\varphi^{\frac{1}{4}} (x f_{k(j)}^{n(j), l(j)} - x) D_\varphi^{\frac{1}{4}} \Vert_ 2\\
\leq & (1+\epsilon_j) \Vert \sigma_{-i/4}^\varphi(x) \Vert \Vert (\sigma^\varphi_{-i/4}( f_{k(j)}^{n(j), l(j)})-1 ) D_\varphi^{\frac{1}{2}} \Vert_2 \\
\leq & (1+ \epsilon_j) \Vert \sigma_{-i/4}^\varphi(x) \Vert ( \Vert (f_{k(j)}^{n(j), l(j)} - 1) D_\varphi^{\frac{1}{2}}  \Vert_2 + \Vert ( \sigma_{-i/4}^\varphi(f_{k(j)}^{n(j), l(j)}) - f_{k(j)}^{n(j), l(j)}) D_\varphi^{\frac{1}{2}} \Vert_2 )\\
\leq & (1+ \epsilon_j) \Vert \sigma_{-i/4}^\varphi(x) \Vert ( \Vert (f_{k(j)}^{n(j), l(j)} -1) D_\varphi^{\frac{1}{2}}\Vert_2 + \epsilon_j \Vert D_\varphi^{\frac{1}{2}} \Vert_2 ).
\end{split}
\end{equation}
We have,
\begin{equation}\label{Eqn=SecondEQN}
\begin{split}
\Vert (f_{k(j)}^{n(j), l(j)} -1 ) D_\varphi^{\frac{1}{2}} \Vert_2^2 = & \varphi( (f_{k(j)}^{n(j), l(j)} -1)^2 )
\leq  C_{n(j)} \varphi( (\Phi_{k(j)}^{l(j)}(1) -1 )^2 )\\
=   C_{n(j)} \varphi( (g_{k(j)}^{l(j)} - 1  )^2 )
= & C_{n(j)} \Vert (g_{k(j)}^{l(j)} - 1  )D_\varphi^{\frac{1}{2}} \Vert_2^2.
\end{split}
\end{equation}
And in turn,
\begin{equation}\label{Eqn=ThirdEQN}
\begin{split}
& \sqrt{C_{n(j)}} \Vert (g_{k(j)}^{l(j)} - 1 ) D_\varphi^{\frac{1}{2}} \Vert_2 \\
\leq & \sqrt{C_{n(j)}} \left( \Vert ( \sigma_{-i/4}^\varphi(g_{k(j)}^{l(j)}) - 1)  D_\varphi^{\frac{1}{2}} \Vert_2 + \Vert (\sigma_{-i/4}^\varphi( g_{k(j)}^{l(j)} ) - g_{k(j)}^{l(j)}  ) D_\varphi^{\frac{1}{2}} \Vert_2 \right) \\
\leq & \sqrt{C_{n(j)}} \Vert D_\varphi^{\frac{1}{4}} (g_{k(j)}^{l(j)} - 1 ) D_\varphi^{\frac{1}{4}}\Vert_2 + \epsilon_j\Vert D_\varphi^{\frac{1}{2}} \Vert_2   \\
= & \sqrt{C_{n(j)}}   \Vert (T_{k(j)}^{l(j)} - 1 ) D_\varphi^{\frac{1}{2}}\Vert_2 + \epsilon_j \Vert D_\varphi^{\frac{1}{2}} \Vert_2.
\end{split}
\end{equation}
Combining \eqref{Eqn=FirstEQN}, \eqref{Eqn=SecondEQN} and \eqref{Eqn=ThirdEQN} we find that,
\begin{align*}
\Vert \sigma_{-i/4}^\varphi(f_{k(j)}^{n(j), l(j)} ) & J \sigma_{-i/4}^{\varphi}(f_{k(j)}^{n(j),l(j)} ) J D_\varphi^{\frac{1}{4}} x D_\varphi^{\frac{1}{4}} - \sigma_{-i/4}^\varphi(f_{k(j)}^{n(j), l(j)}) D_\varphi^{\frac{1}{4}} x D_\varphi^{\frac{1}{4}} \Vert_2 \\ &
\leq     (1+ \epsilon_j)^2 \Vert \sigma_{-i/4}^\varphi(x) \Vert (\epsilon_j+ 2 \epsilon_j \Vert D_\varphi^{\frac{1}{2}} \Vert_2).
\end{align*}

This shows that the third term in \eqref{Eqn=HugeEstimate} tends to 0 as $j \rightarrow \infty$. Lastly, we estimate the fourth summand in \eqref{Eqn=HugeEstimate}. We have,
\begin{align*}
&\Vert \sigma_{-i/4}^\varphi( f_{k(j)}^{n(j), l(j)})   D_\varphi^{\frac{1}{4}} x D_\varphi^{\frac{1}{4}}  - D_\varphi^{\frac{1}{4}} x D_\varphi^{\frac{1}{4}} \Vert_2 \\
 \leq &  \Vert f_{k(j)}^{n(j), l(j)} D_\varphi^{\frac{1}{4}} x D_\varphi^{\frac{1}{4}} - D_\varphi^{\frac{1}{4}} x D_\varphi^{\frac{1}{4}} \Vert_2 + \Vert ( f_{k(j)}^{n(j), l(j)} - \sigma_{-i/4}^{\varphi}(f_{k(j)}^{n(j), l(j)} ) ) D_\varphi^{\frac{1}{4}} x D_\varphi^{\frac{1}{4}} \Vert_2 \\
 \leq & \Vert (f_{k(j)}^{n(j), l(j)} - 1) J\sigma_{-i/4}^{\varphi}(x^\ast)J D_\varphi^{\frac{1}{2}} \Vert_2 + \epsilon_j \Vert D_\varphi^{\frac{1}{4}} x D_\varphi^{\frac{1}{4}} \Vert_ 2\\
 \leq  &   \Vert \sigma_{-i/4}^\varphi( x^\ast) \Vert \Vert (f_{k(j)}^{n(j), l(j)} -1 ) D_\varphi^{\frac{1}{2}} \Vert_2 + \epsilon_j \Vert D_\varphi^{\frac{1}{4}} x D_\varphi^{\frac{1}{4}} \Vert_2.
\end{align*}
Next we have,
\begin{align*}
\Vert ( f_{k(j)}^{n(j), l(j)} - 1  ) D_\varphi^{\frac{1}{2}} \Vert_2^2 & = \varphi(  ( f_{k(j) }^{n(j), l(j)} - 1  )^2 )
\leq  C_{n(j)} \varphi( (\Phi_{k(j)}^{l(j)}(1) - 1  )^2 )\\
& \leq  C_{n(j)} \Vert (g_{k(j)}^{l(j)} - 1) D_\varphi^{\frac{1}{2}} \Vert_2^2.
\end{align*}
Then,
\begin{align*}
\sqrt{C_{n(j)}} \Vert (g_{k(j)}^{l(j)} - 1 )D_\varphi^{\frac{1}{2}} \Vert_ 2 &\leq \sqrt{C_{n(j)}} \Vert ( \sigma_{-i/4}^\varphi(g_{k(j)}^{l(j)}) -1) D_\varphi^{\frac{1}{2}} \Vert_2 + \sqrt{C_{n(j)}} \Vert ( \sigma_{-i/4}^\varphi(g_{k(j)}^{l(j)} )- g_{k(j)}^{l(j)}  )D_\varphi^{\frac{1}{2}} \Vert_2 \\
& \leq  \sqrt{C_{n(j)} } \Vert (T_{k(j)}^{l(j)} - 1) D_\varphi^{\frac{1}{2}} \Vert_2 + \sqrt{C_{n(j)}} \Vert ( \sigma_{-i/4}^\varphi(g_{k(j)}^{l(j)}) - g_{k(j)}^{l(j)} ) D_\varphi^{\frac{1}{2}} \Vert_2 \\
& \leq  \sqrt{C_{n(j)}} \Vert (T_{k(j)}^{l(j)} - 1) D_\varphi^{\frac{1}{2}} \Vert_2 + \epsilon_j \leq 2 \epsilon_j.
\end{align*}
Combining the previous three estimates, this proves that the fourth summand of \eqref{Eqn=HugeEstimate} converges to 0.
In all, $(\Psi_j)_{j=1}^{\infty}$ forms a sequence of completely positive maps with $\Psi_j(1) \leq 1$ that witnesses the symmetric Haagerup property for $(\cM, \varphi)$.
\end{proof}

\begin{propn}\label{Prop=Corner}
Let $(\cM, \varphi)$ be a von Neumann algebra equipped with normal, semifinite, faithful weight $\varphi$. Let $\{ e_n \}_{n \in \mathbb{N}}$ be a sequence of projections in the centralizer of $\varphi$ strongly convergent to the identity. If $(\cM, \varphi)$ has the symmetric Haagerup property then for every $n \in \mathbb{N}$ $(e_n \cM e_n, e_n \varphi e_n)$ has the symmetric Haagerup property. Conversely,  if for every $n \in \mathbb{N}$ $(e_n \cM e_n, e_n \varphi e_n)$ has the symmetric Haagerup property with contractive approximating cp maps then $(\cM, \varphi)$ has  the symmetric Haagerup property with contractive approximating cp maps.
\end{propn}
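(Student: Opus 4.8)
The plan is to realise each corner, at the level of the Haagerup $L^2$-space, as the compression by a canonical projection, to transport the approximating maps along the inclusion $e_n\cM e_n\hookrightarrow\cM$ and the compression $x\mapsto e_nxe_n$, and to finish the converse direction with a diagonal argument. First I would record the relevant identifications. Since each $e_n$ lies in the centralizer of $\varphi$, we have $\sigma_t^\varphi(e_n)=e_n$, so $e_n$ commutes with $\lambda_t=D_\varphi^{it}$ and hence with $\hqu$ and $\hsq$, and the modular group of the corner weight $e_n\varphi e_n$ on $e_n\cM e_n$ is the restriction of $\sigma^\varphi$. This yields a canonical isometry
\[
V_n\colon L^2(e_n\cM e_n, e_n\varphi e_n)\longrightarrow L^2(\cM,\varphi),\qquad \hqu x\hqu\longmapsto \hqu x\hqu\quad(x\in e_n\cM e_n),
\]
whose range is the range of the projection $P_n$ on $L^2(\cM,\varphi)$ implementing $\xi\mapsto e_n\xi e_n$ (this projection commutes with both the left and right actions of $e_n$); since $e_n\to 1$ strongly we have $P_n\to 1$ strongly. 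I would also record the weight estimate: because $e_n$ commutes with $\hsq$ and $0\leq e_n\leq 1$, the trace property of the Haagerup $L^1$-trace gives $\varphi(e_n a e_n)\leq\varphi(a)$ for $a\in\cM^+$, which is exactly what makes all the compressions below $\varphi$-reducing. Finally, the KMS $L^2$-implementation is functorial: for $\varphi$-reducing positive maps it is defined by the same sandwiching by $\hqu$, so it respects composition, and by Lemma \ref{Lem=L2Interpolation} it is an $L^2$-contraction whenever the underlying map is contractive and $\varphi$-reducing.

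For the forward implication, given cp maps $\Phi_k$ witnessing the symmetric Haagerup property of $(\cM,\varphi)$ I set $\Phi_k^n(x)=e_n\Phi_k(x)e_n$ for $x\in e_n\cM e_n$. These are normal and completely positive, and the weight estimate gives $(e_n\varphi e_n)\circ\Phi_k^n\leq e_n\varphi e_n$, since $(e_n\varphi e_n)(\Phi_k^n(x))=\varphi(e_n\Phi_k(x)e_n)\leq\varphi(\Phi_k(x))\leq\varphi(x)$. Writing $\iota_n$ for the inclusion and $C_n(x)=e_nxe_n$ for the compression, functoriality yields $(\Phi_k^n)^{(2)}=V_n^*\,\Phi_k^{(2)}\,V_n$, a compression of the compact operator $\Phi_k^{(2)}$ and hence compact; and since $\Phi_k^{(2)}\to 1$ strongly and $V_n$ is an isometry, $(\Phi_k^n)^{(2)}\to V_n^*V_n=\id$ strongly. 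This establishes the symmetric Haagerup property of $(e_n\cM e_n,e_n\varphi e_n)$.

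For the converse, let $\Phi_k^n$ be \emph{contractive} cp maps witnessing the symmetric Haagerup property of $(e_n\cM e_n,e_n\varphi e_n)$, and put $\Psi_{k,n}=\iota_n\circ\Phi_k^n\circ C_n$, that is $\Psi_{k,n}(x)=\Phi_k^n(e_nxe_n)$. Each $\Psi_{k,n}$ is a normal cp map with $\Psi_{k,n}(1)=\Phi_k^n(e_n)\leq e_n\leq 1$, hence contractive, and the weight estimate together with the corner-reducing property gives $\varphi\circ\Psi_{k,n}\leq\varphi$. Functoriality now gives $\Psi_{k,n}^{(2)}=V_n(\Phi_k^n)^{(2)}V_n^*$, which is compact (a conjugate of a compact operator by isometries) and an $L^2$-contraction. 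For fixed $n$, as $k\to\infty$ we have $(\Phi_k^n)^{(2)}\to\id$ strongly, whence $\Psi_{k,n}^{(2)}\to V_nV_n^*=P_n$ strongly, while $P_n\to 1$ strongly as $n\to\infty$.

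It then remains to extract a single sequence whose $L^2$-implementations converge strongly to $1$. Using separability of $L^2(\cM,\varphi)$, fix a dense sequence $(\xi_i)$; for each $m$ choose $n(m)$ with $\|P_{n(m)}\xi_i-\xi_i\|\leq 1/m$ for $i\leq m$, and then, using $\Psi_{k,n(m)}^{(2)}\to P_{n(m)}$ strongly, choose $k(m)$ with $\|\Psi_{k(m),n(m)}^{(2)}\xi_i-P_{n(m)}\xi_i\|\leq 1/m$ for $i\leq m$. Setting $\Theta_m=\Psi_{k(m),n(m)}$ gives contractive, $\varphi$-reducing cp maps on $\cM$ with compact $L^2$-implementations; as the $\Theta_m^{(2)}$ are uniformly bounded by $1$, the estimate on the dense set forces $\Theta_m^{(2)}\to 1$ strongly, so $(\cM,\varphi)$ has the symmetric Haagerup property with contractive approximating maps. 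The main obstacle is the preliminary identification of the first paragraph: checking that the corner Haagerup $L^2$-space embeds as the range of the compression projection $P_n$ compatibly with the KMS embeddings, and that the compressions are genuinely $\varphi$-reducing. Once this is in place, the compactness assertions, the two functoriality formulas, and the diagonalisation are all routine.
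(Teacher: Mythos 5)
Your proposal is correct and follows essentially the same route as the paper: compressing the approximating maps by $e_n$ (your $V_n^*\Phi_k^{(2)}V_n$ is the paper's $e_nJe_nJ\,T_k$ restricted to the corner) for the forward direction, pre-compressing ($\Psi_{k,n}=\Phi_k^n(e_n\cdot e_n)$, with implementation $T_{k,n}e_nJe_nJ$ in the paper's notation) for the converse, and concluding with the same uniform-boundedness-plus-density argument, which the paper phrases as a $3\epsilon$-estimate over finite sets rather than your diagonal extraction. The only cosmetic differences are your explicit use of the isometry $V_n$ and the projection $P_n=e_nJe_nJ$, which the paper uses implicitly, and your appeal to interpolation in place of the paper's Kadison--Schwarz inequality for contractivity of the $L^2$-implementations.
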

\begin{proof}
Let $\Phi_k$ be  completely positive maps for $(\cM, \varphi)$ witnessing the symmetric Haagerup property with KMS $L^2$-implementations $T_k:=\Phi_k^{(2)}$.  Put $\Psi_{k,n}( \: \cdot \: ) = e_n \Phi_k(\: \cdot \:) e_n$. Then $\Psi_{k,n}$ is  completely positive with $(e_n \varphi e_n) \circ \Psi_{k,n} \leq e_n \varphi e_n$ (see \cite[Lemma 2.3]{CasSka}) and its KMS $L^2$-implementation $T_{k,n}$ is given by
\[
e_n D_\varphi^{\frac{1}{4}} x D_\varphi^{\frac{1}{4}}e_n \mapsto e_n D_\varphi^{\frac{1}{4}} e_n \Phi_k(x) e_n D_\varphi^{\frac{1}{4}} e_n = e_n D_\varphi^{\frac{1}{4}} \Phi_k(x) D^{\frac{1}{4}}_\varphi e_n = e_n J e_n J T_k(D_\varphi^{\frac{1}{4}}x D_\varphi^{\frac{1}{4}}).
\]
The latter mapping is clearly compact. Let $F \subseteq \mphi$ be a finite set. Let $n \in \mathbb{N}$ be such that for every $x \in F$ we have $\Vert e_n D_\varphi^{\frac{1}{4}} x D_\varphi^{\frac{1}{4}} e_n - D_\varphi^{\frac{1}{4}} x D_\varphi^{\frac{1}{4}} \Vert_2 \leq \epsilon$. Then choose $k \in \mathbb{N}$ such that $\Vert T_k (e_n D_\varphi^{\frac{1}{4}}x D_\varphi^{\frac{1}{4}} e_n) - e_n D^{\frac{1}{4}}_\varphi x D^{\frac{1}{4}}_\varphi e_n \Vert_2 \leq \epsilon$. Then the triangle inequality gives $\Vert T_{k,n} (D_\varphi^{\frac{1}{4}}x D_\varphi^{\frac{1}{4}}) - D_\varphi^{\frac{1}{4}} x D_\varphi^{\frac{1}{4}}   \Vert_2 \leq 2 \epsilon$. This proves that $T_{k,n} \rightarrow 1$ strongly in $k$ by a $3\epsilon$-estimate and the fact that $T_{k,n}$ is uniformly bounded in $k$ by construction.

\vspace{0.3cm}

Next we prove the converse. So suppose that every $(e_n \cM e_n, e_n \varphi e_n)$ has the symmetric Haagerup property. Let $\Phi_{k,n}$ be the corresponding completely positive maps which by assumption may be taken contractive. Set $\Psi_{k,n}( \: \cdot \:) = \Phi_{k,n}(e_n \: \cdot \: e_n)$. Then $\Psi_{k,n}$ is a contractive, completely positive map with $\varphi \circ \Psi_{k,n} \leq \varphi$  (see \cite[Lemma 2.3]{CasSka}) and its KMS $L^2$-implementation is given by
\[
S_{k,n}: D_\varphi^{\frac{1}{4}} x D_\varphi^{\frac{1}{4}} \mapsto D_\varphi^{\frac{1}{4}} \Psi_{k,n}(x) D_\varphi^{\frac{1}{4}} = T_{k,n}e_n J e_n J (D_\varphi^{\frac{1}{4}} x D_\varphi^{\frac{1}{4}}),
\]
which is compact. Let $F \subseteq \mphi$ be finite. Choose $n \in \mathbb{N}$ such that for every $x \in F$ we have $\Vert e_n D_\varphi^{\frac{1}{4}} x D_\varphi^{\frac{1}{4}} e_n - D_\varphi^{\frac{1}{4}} x D_\varphi^{\frac{1}{4}} \Vert_2 \leq \epsilon$. Next choose $k \in \mathbb{N}$ such that $\Vert T_{k,n} (e_n D_\varphi^{\frac{1}{4}} x D_\varphi^{\frac{1}{4}} e_n   ) - e_n D_\varphi^{\frac{1}{4}} x D_\varphi^{\frac{1}{4}} e_n \Vert_2 \leq \epsilon$. The triangle inequality then yields $\Vert S_{k,n} (D_\varphi^{\frac{1}{4}} x D_\varphi^{\frac{1}{4}} ) - D_\varphi^{\frac{1}{4}} x D_\varphi^{\frac{1}{4}}  \Vert_2 \leq 2 \epsilon$. The lemma follows then from a $3\epsilon$-argument and the fact that $\Psi_{k,n}$ being contractive implies through the Kadison-Schwarz inequality that $T_{k,n}$ is contractive for every $k,n \in \mathbb{N}$.
\end{proof}

\begin{propn}\label{Prop=RadonNikodym}
Let $\cM$ be a finite von Neumann algebra equipped with normal semifinite faithful trace $\tau$. Let $h \in \cM^+$ be a positive self-adjoint operator with trivial kernel (so its inverse exists as an unbounded operator) and let $\varphi(\: \cdot \:) =  \tau(h\: \cdot \: h)$ (formally, so the Connes cocycle derivative is determined by $(D\varphi/D\tau)_t = h^{2it}, t \in \mathbb{R}$). Then $(\cM, \varphi)$ has the symmetric Haagerup property if and only if $(\cM, \tau)$ has the symmetric Haagerup property. If either of these two pairs has the symmetric Haagerup property then the approximating cp maps may be taking contractive.
\end{propn}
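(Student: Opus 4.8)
The plan is to reduce everything to the boundedly invertible, finite-trace situation of Lemma \ref{Lem=MainResult} (together with Lemma \ref{Lem=ExtraJolissaint}) by compressing to corners on which $h$ becomes bounded and bounded below, and then to reassemble the corner statements using Proposition \ref{Prop=Corner}. The two facts I would keep in mind throughout are that, since $\varphi=\tau(h\,\cdot\,h)$ with $\tau$ tracial, the modular group is $\sigma_t^{\varphi}=\mathrm{Ad}(h^{2it})$, so the centralizer $\cM^{\varphi}$ coincides with the commutant $\{h\}'$ of $h$ in $\cM$; and that $\tau=\varphi(h^{-2}\,\cdot\,)$, the two weights being mutually dominated wherever $h^{\pm 2}$ is bounded.

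First I would produce the projections to compress against. Put $q_m=\mathbf{1}_{[1/m,m]}(h)$; these lie in $\{h\}''\subseteq Z(\cM^{\varphi})$, they increase strongly to $1$ because $h$ has trivial kernel and is affiliated with $\cM$, and $h$ is boundedly invertible on $q_m\cM q_m$. It is standard that the restriction of $\varphi$ to its centralizer $\cM^{\varphi}$ is a normal semifinite faithful trace, so on each central corner $q_m\cM^{\varphi}$ I may choose $\varphi$-finite projections increasing to $q_m$; since $h^{-2}\le m^2$ there, the identity $\tau=\varphi(h^{-2}\,\cdot\,)$ forces these projections to be $\tau$-finite as well. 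A diagonal exhaustion (using separability of the predual) then yields a single increasing sequence $(e_n)$ of projections in $\cM^{\varphi}=\{h\}'$, strongly convergent to $1$, each $\tau$-finite, with $h$ boundedly invertible on $\cM_n:=e_n\cM e_n$. On $\cM_n$ we are exactly in the hypotheses of Lemma \ref{Lem=MainResult}: a finite algebra with finite trace $\tau_n:=\tau|_{\cM_n}$ (a state after normalisation), a boundedly invertible $h_n:=h e_n$, and $e_n\varphi e_n=\tau_n(h_n\,\cdot\,h_n)$.

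With these in hand both implications run in parallel. For $(\cM,\varphi)\Rightarrow(\cM,\tau)$: Proposition \ref{Prop=Corner} turns approximating maps for $(\cM,\varphi)$ into approximating maps for $(\cM_n,e_n\varphi e_n)$; the rescaling of Step 1 of Lemma \ref{Lem=MainResult}, namely $\Psi(\,\cdot\,)=h_n\Phi(h_n^{-1}\,\cdot\,h_n^{-1})h_n$, transfers these to $(\cM_n,\tau_n)$; Theorem \ref{Thm=TracialCase} (or Lemma \ref{Lem=ExtraJolissaint}) makes the corner maps contractive; and the converse half of Proposition \ref{Prop=Corner}, whose hypothesis is precisely contractivity on corners, delivers the symmetric Haagerup property of $(\cM,\tau)$ with contractive maps. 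The reverse implication is identical after swapping the roles of $\varphi$ and $\tau$ and using the inverse rescaling $h_n^{-1}\Phi(h_n\,\cdot\,h_n)h_n^{-1}$; the only difference is that contractivity of the corner $\varphi$-maps is now supplied by Lemma \ref{Lem=MainResult} itself, which applies because $\tau_n$ is finite and $h_n$ boundedly invertible. The final contractivity assertion is covered by these same arguments: from $\tau$-HAP one already gets contractive $\tau$-maps globally via Lemma \ref{Lem=ExtraJolissaint}, and from $\varphi$-HAP one gets contractive $\varphi$-maps by compressing, applying Lemma \ref{Lem=MainResult} on each $\cM_n$, and gluing with Proposition \ref{Prop=Corner}.

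I expect the genuine obstacle to be the construction of the second paragraph: a single sequence of projections that simultaneously lies in the centralizer of $\varphi$ (so that Proposition \ref{Prop=Corner} is available), makes $h$ boundedly invertible, and is $\tau$-finite (so that the corner meets the finite-trace, boundedly invertible hypotheses of Lemma \ref{Lem=MainResult}), while still increasing to $1$. These requirements pull in different directions -- $\tau$-finite projections need not bound $h$, and the spectral projections $q_m$ that do bound $h$ need not be $\tau$-finite -- so the two constraints must be reconciled by first localising with $q_m$ and only then extracting finite projections inside $q_m\cM^{\varphi}$, where the semifinite trace $\varphi|_{\cM^{\varphi}}$ provides them and the bound $h^{-2}\le m^2$ converts $\varphi$-finiteness into $\tau$-finiteness. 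Once the projections are fixed, everything else is routine bookkeeping with Proposition \ref{Prop=Corner}, the Step 1 rescaling, and the two contractivity lemmas.
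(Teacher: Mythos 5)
Your argument is correct and shares the paper's overall architecture: compress to corners on which $h$ is bounded with bounded inverse, pass between $\varphi$ and $\tau$ there via the rescaling $\Phi\mapsto h\Phi(h^{-1}\cdot h^{-1})h$, use the tracial results (Theorem \ref{Thm=TracialCase}, Lemma \ref{Lem=ExtraJolissaint}, Lemma \ref{Lem=MainResult}) to get contractivity, and reassemble with Proposition \ref{Prop=Corner}. The genuine difference is your choice of projections, and it is not cosmetic. The paper compresses only by the spectral projections $e_n=\chi_{(1/n,n)}(h)$ and then cites Lemma \ref{Lem=MainResult} on the corners $(e_n\cM e_n, e_n\tau e_n)$; but that lemma is stated for a tracial \emph{state}, and its Step 2 really uses finiteness (it manipulates $D_\varphi^{\frac{1}{2}}$ as a vector of $L^2$, with $\Vert D_\varphi^{\frac{1}{2}}\Vert_2=\varphi(1)^{\frac{1}{2}}$), whereas under the bare hypotheses of the proposition $\tau(e_n)$ can be infinite (e.g.\ $h$ bounded with bounded inverse and $\tau$ infinite gives $e_n=1$). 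Finiteness of the corners is automatic when $\varphi$ is a state, and it holds in the paper's actual application inside Lemma \ref{Lem=Contractive} (where Terp's lemma gives $\tau(p_n)<\infty$), but not in general. Your second compression -- first localising with $q_m=\chi_{[1/m,m]}(h)$ so that $\varphi$- and $\tau$-finiteness become equivalent, then extracting finite projections in the centralizer -- is exactly what is needed to invoke Lemma \ref{Lem=MainResult} legitimately, so on this point your proof is more complete than the one in the paper.

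The one soft spot is your appeal to the ``standard'' fact that $\varphi|_{\cM^{\varphi}}$ is a \emph{semifinite} trace. For a general normal semifinite faithful weight this is strict semifiniteness, which can fail: on $B(\Hil)$ with $h$ positive, injective and generating a masa, the weight $\mathrm{Tr}(h\,\cdot\,h)$ restricted to its centralizer (that masa) takes only the values $0$ and $\infty$. In your situation it does hold, but for a reason specific to the finite setting: since $\cM$ is finite with separable predual, there is a faithful normal tracial state $\tau_0$ and a positive operator $k$ affiliated with $Z(\cM)$ such that $\tau=\tau_0(k\,\cdot\,)$; as $Z(\cM)\subseteq\{h\}'\cap\cM=\cM^{\varphi}$, the projections $z_j=\chi_{[0,j]}(k)$ lie in $\cM^{\varphi}$, and $\tau(z_j q_m)\leq j\,\tau_0(q_m)<\infty$ while $\varphi(z_j q_m)\leq m^2\tau(z_j q_m)<\infty$. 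Setting $e_n=z_n q_n$ gives directly an increasing sequence of $\tau$-finite (and $\varphi$-finite) projections in $\cM^{\varphi}$ converging strongly to $1$ with $h$ boundedly invertible on each corner, which also removes the need for your diagonal exhaustion. With that substitution the proof is complete.
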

\begin{proof}
Let $(\Phi_k)_{k=1}^{\infty}$ be completely positive maps witnessing the  symmetric Haagerup property of $(\cM, \varphi)$. Suppose first that $h$ is bounded and boundedly invertible. Then set $\Psi_k(\: \cdot \:) = h \Phi_k(h^{-1} \: \cdot \: h^{-1} ) h$. The maps $\Psi_k$ are completely positive, $\tau \circ \Psi_k \leq \tau$ and their KMS $L^2$-implementations are given by $h J h J T_k h^{-1} J h^{-1} J$, so are compact and converge to 1 strongly as $k$ tends to infinity.

In the case of a general $h$ consider $e_n = \chi_{(\frac{1}{n}, n)}(h)$. Then $(\cM, \tau)$ has the symmetric Haagerup property if and only if $(\cM, \tau)$ has the symmetric Haagerup property with contractive approximating cp maps (Lemma \ref{Lem=ExtraJolissaint}) if and only if for every $n \in \mathbb{N}$, $(e_n \cM e_n, e_n \tau e_n)$ has the symmetric Haagerup property with contractive approximating cp maps (Proposition \ref{Prop=Corner}) if and only if for every $n \in \mathbb{N}$, $(e_n \cM e_n, e_n \varphi e_n)$ has the symmetric Haagerup property with contractive approximating cp maps (first paragraph and Lemma \ref{Lem=MainResult}) if and only if $(\cM, \varphi)$ has the Haagerup property with contractive approximating cp maps (Proposition \ref{Prop=Corner} and its proof).
\end{proof}

\begin{lem}\label{Lem=Convergence}
Let $f_j(t) = \sqrt{\frac{j}{\pi}} e^{-\frac{1}{2} j t^2}$. Take $x \in \mathcal{T}_\varphi^2$. As $j \rightarrow \infty$ we have,
\[
\Vert D_\varphi^{\frac{1}{4}} \pi^{-1} \circ T ( \lambda(f_j) \pi(x) \lambda(f_j) ) D_\varphi^{\frac{1}{4}} - D_\varphi^{\frac{1}{4}}x D_\varphi^{\frac{1}{4}} \Vert_2 \rightarrow 0.
\]
\end{lem}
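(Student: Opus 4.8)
The plan is to compute the element $\pi^{-1}\circ T(\lambda(f_j)\pi(x)\lambda(f_j))$ explicitly, recognise it as a Gaussian average of the modular flow applied to $x$, and then deduce the claim from a standard approximate-identity estimate in $L^2(\cM,\varphi)$. I will use that $\lambda_t=D_\varphi^{it}$ implements $\sigma_t^\varphi$ on $\pi(\cM)$, that the dual action satisfies $\theta_s(\pi(a))=\pi(a)$ and $\theta_s(\lambda_t)=e^{ist}\lambda_t$, and that $T=\int_{\mathbb{R}}\theta_s(\,\cdot\,)\,ds$ is the canonical operator-valued weight from $\cN$ onto the fixed-point algebra $\cN^\theta=\pi(\cM)$ (normalised as in the present setting).

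First I would carry out the computation. Writing $\lambda(f_j)\pi(x)\lambda(f_j)=\iint f_j(t)f_j(u)\,\lambda_t\pi(x)\lambda_u\,dt\,du$ and applying $\theta_s$ produces the extra scalar $e^{is(t+u)}$ in the integrand; integrating over $s\in\mathbb{R}$ and using $\int_{\mathbb{R}}e^{is(t+u)}\,ds=2\pi\,\delta(t+u)$ collapses the double integral, so that
\[
\pi^{-1}\circ T(\lambda(f_j)\pi(x)\lambda(f_j))=\int_{\mathbb{R}}\rho_j(t)\,\sigma_t^\varphi(x)\,dt,
\]
where $\rho_j$ is proportional to $f_j^2$, a centred Gaussian whose variance tends to $0$, normalised (through the constants in $\lambda$ and in $T$) so that $\int_{\mathbb{R}}\rho_j=1$. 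The assumption $x\in\mathcal{T}_\varphi^2$ is what makes this rigorous: it guarantees that $\lambda(f_j)\pi(x)\lambda(f_j)$ lies in $\Dom T$ and that the integrals above converge as Bochner integrals, the map $t\mapsto\sigma_t^\varphi(x)$ being norm-continuous and bounded.

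Next I would run the $L^2$-estimate. Put $\xi=D_\varphi^{\frac{1}{4}}x D_\varphi^{\frac{1}{4}}\in L^2(\cM,\varphi)$ and note the key identity
\[
D_\varphi^{\frac{1}{4}}\sigma_t^\varphi(x)D_\varphi^{\frac{1}{4}}=D_\varphi^{\frac{1}{4}+it}\,x\,D_\varphi^{\frac{1}{4}-it}=\lambda_t\,\xi\,\lambda_{-t}.
\]
Since $\int_{\mathbb{R}}\rho_j=1$, the quantity to be estimated equals
\[
\Big\Vert\int_{\mathbb{R}}\rho_j(t)\big(\lambda_t\xi\lambda_{-t}-\xi\big)\,dt\Big\Vert_2\leq\int_{\mathbb{R}}\rho_j(t)\,\Vert\lambda_t\xi\lambda_{-t}-\xi\Vert_2\,dt.
\]
The function $t\mapsto\lambda_t\xi\lambda_{-t}$ is $\Vert\cdot\Vert_2$-continuous (left and right multiplication by the strongly continuous unitary group $\lambda_t=D_\varphi^{it}$ are $L^2$-isometries depending strongly continuously on $t$), it takes the value $\xi$ at $t=0$, and it is bounded by $2\Vert\xi\Vert_2$. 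As $\rho_j$ is a probability density concentrating at $t=0$, splitting the integral over $\{|t|<\delta\}$ and $\{|t|\geq\delta\}$ and letting first $j\to\infty$ and then $\delta\to0$ forces the right-hand side to $0$.

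The part requiring the most care is the operator-valued-weight computation: one must justify the interchange of $\int_{\mathbb{R}}\theta_s\,ds$ with the double integral and the distributional identity $\int_{\mathbb{R}}e^{is(t+u)}\,ds=2\pi\,\delta(t+u)$ at the level of (unbounded) $\tau$-measurable operators, and confirm that $\lambda(f_j)\pi(x)\lambda(f_j)$ genuinely lies in $\Dom T$. This is precisely where $x\in\mathcal{T}_\varphi^2$ is used. Once the explicit Gaussian average is established, the remaining continuity and approximate-identity arguments are routine.
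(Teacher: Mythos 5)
Your proposal is correct, and its first half is essentially the paper's: the paper's proof also rests on identifying $\pi^{-1}\circ T(\lambda(f_j)\pi(\,\cdot\,)\lambda(f_j))$ with a Gaussian average of the modular flow, but instead of rederiving this through the $\delta$-function heuristic it imports the formula from the literature, citing \cite[Lemma 3.5]{HaaKra} together with \cite[Theorem 1.17]{TakII} --- which is precisely the justification you flag as ``the part requiring the most care'', so quoting those results closes that step. Where you genuinely diverge is the concluding $L^2$-estimate. The paper appeals to analyticity a second time, writing $D_\varphi^{\frac{1}{4}}zD_\varphi^{\frac{1}{4}}=\sigma_{-i/4}^\varphi(z)D_\varphi^{\frac{1}{2}}$, and then expands the squared norm into the double integral $\int_{\mathbb{R}}\int_{\mathbb{R}}\vert f_j(t)\vert^2\vert f_j(s)\vert^2\,\varphi\bigl((\sigma_{-i/4-t}^\varphi(x)-\sigma_{-i/4}^\varphi(x))^\ast(\sigma_{-i/4-s}^\varphi(x)-\sigma_{-i/4}^\varphi(x))\bigr)\,ds\,dt$, which tends to $0$ by the joint continuity of $(s,t)\mapsto\varphi(\sigma_s^\varphi(x)\sigma_t^\varphi(y))$ for $x,y\in\mathcal{T}_\varphi^2$; you instead use $D_\varphi^{\frac{1}{4}}\sigma_t^\varphi(x)D_\varphi^{\frac{1}{4}}=\lambda_t\,\xi\,\lambda_{-t}$ with $\xi=D_\varphi^{\frac{1}{4}}xD_\varphi^{\frac{1}{4}}$, pull the norm through the Bochner integral, and run a scalar approximate-identity argument. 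Your route is more elementary at this stage, needing analyticity of $x$ only for the Bochner-integrability and domain questions, whereas the paper's double-integral form reduces everything to continuity of scalar correlation functions (which is the shape in which \cite{HaaKra} delivers the formula). Two small repairs to your write-up: first, left and right multiplication by $\lambda_t$ \emph{separately} do not preserve $L^2(\cM,\varphi)$, since they perturb the defining relation $\theta_s(\xi)=e^{-s/2}\xi$ by a phase; only the conjugation $\xi\mapsto\lambda_t\xi\lambda_{-t}$, in which the phases cancel, acts on $L^2(\cM,\varphi)$, and its strong continuity is cleanest to obtain from the fact that it coincides with the modular unitary group $\Delta_\varphi^{it}$ of the standard form. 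Second, one must check that $\rho_j$ really integrates to $1$ under the chosen conventions for $\lambda$ and $T$ (a normalisation point on which the paper itself is somewhat cavalier). Neither affects the substance: with these adjustments your argument is complete.
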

\begin{proof}
Note that $\lambda(f_j) \geq 0$ since the Fourier transform of $f_j$ is a positive function. We have, using \cite[Theorem 1.17]{TakII} and \cite[Lemma 3.5]{HaaKra},
\[
\begin{split}
& \Vert D_\varphi^{\frac{1}{4}} \pi^{-1} \circ T ( \lambda(f_j) \pi(x) \lambda(f_j) ) D_\varphi^{\frac{1}{4}} - D_\varphi^{\frac{1}{4}}x D_\varphi^{\frac{1}{4}} \Vert_2^2 \\
= &  \Vert \pi^{-1} \circ T ( \lambda(f_j) \pi(\sigma_{-i/4}^\varphi(x)) \lambda(f_j) ) D_\varphi^{\frac{1}{2}} -  \sigma_{-i/4}^\varphi(x) D_\varphi^{\frac{1}{2}} \Vert_2^2 \\
= & \int_{\mathbb{R}} \int_{\mathbb{R}} \vert f_j(t)\vert^2 \vert f_j(s)\vert^2  \varphi((\sigma_{-i/4-t}^\varphi(x)- \sigma_{-i/4}^\varphi(x))^\ast  (\sigma_{-i/4-s}^\varphi(x)- \sigma_{-i/4}^\varphi(x)) ) ds dt
\rightarrow  0.
\end{split}
\]
The identification of the limit follows from the fact that for $x, y \in \mathcal{T}_\varphi^2$ the mapping $\mathbb{R}^2 \ni (s,t) \mapsto \varphi(\sigma_s^\varphi(x) \sigma_t^\varphi(y))$ is continuous.
\end{proof}

At this point we recall some basic facts regarding crossed products. We use the same notational conventions as \cite{CasSka}. In particular, let $\cM$ be a von Neumann algebra with normal, semifinite, faithful weight $\varphi$. Let again $\sigma^\varphi: \mathbb{R} \rightarrow {\rm Aut}(\cM)$ be the modular automorphism group. Let $\cN = \cM \rtimes_{\sigma^\varphi} \mathbb{R}$ be the corresponding crossed product, also called the core of $\cM$. By $T: \cN^+ \rightarrow \cM^+_{{\rm ext}}$ we denote the canonical operator valued weight, so that $\tilde{\varphi} = \varphi \circ \pi^{-1} \circ T$ is the dual weight of $\varphi$. We let $\pi: \cM \rightarrow \cN$ be the natural embedding and for $f \in L^1(\mathbb{R})$, we let $\lambda(f)$ be the left regular representation of $f$ in the crossed product $\cN$. Finally $\mathcal{T}_\varphi$ denotes the Tomita algebra inside $\cM$. Recall that it consists of all $x \in \cM$ that are analytic for $\sigma^\varphi$ and such that $\sigma^\varphi_z(x) \in \nphi \cap \nphi^\ast$ with $z \in \mathbb{C}$. Let $\theta: \mathbb{R} \rightarrow {\rm Aut}(\cN)$ be the dual action. Then $\cN \rtimes_\theta \mathbb{R}$ is isomorphic to $\cM \otimes L^2(\mathbb{R})$ and the double dual weight $\tilde{\tilde{\varphi}}$ can be identified with $\varphi \otimes {\rm Tr}$. We shall regard $\cN$ as a subalgebra of $\cM \otimes L^2(\mathbb{R})$ (the embedding being canonical).  Let $S: (\cM \otimes L^2(\mathbb{R}) )^+ \rightarrow \cN^+_{{\rm ext}}$ denote the canonical operator valued weight, so that $\tilde{\tilde{\varphi}} = \tilde{\varphi} \circ  S$ is the double dual weight of $\varphi$. For more details we refer to Section 5 of \cite{CasSka}.

\begin{propn}\label{Prop=CrossedProduct}
Suppose that $(\cN, \tilde{\varphi})$ has the symmetric Haagerup property. Then $(\cM, \varphi)$ has the symmetric Haagerup property. Conversely, if $(\cM, \varphi)$ has the symmetric Haagerup property then so does $(\cN, \tilde{\varphi})$.
\end{propn}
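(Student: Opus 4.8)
The plan is to prove the two implications separately, with Lemma~\ref{Lem=Convergence} as the analytic engine for the first and Takesaki duality reducing the second to it. For ``$(\cN,\tilde\varphi)$ has the symmetric Haagerup property $\Rightarrow(\cM,\varphi)$ has it'' I would start from cp maps $\Psi_k\colon\cN\to\cN$ witnessing the property for $(\cN,\tilde\varphi)$, so that $\tilde\varphi\circ\Psi_k\le\tilde\varphi$ and the implementations $\Psi_k^{(2)}$ are compact with $\Psi_k^{(2)}\to\mathrm{id}$ strongly. With $f_j$ as in Lemma~\ref{Lem=Convergence}, put
\[
\Phi_{k,j}(\,\cdot\,)=\pi^{-1}\circ T\big(\lambda(f_j)\,\Psi_k(\pi(\,\cdot\,))\,\lambda(f_j)\big)\colon\cM\to\cM .
\]
Each $\Phi_{k,j}$ is a normal cp map, being a composition of $\pi$, the compression by the positive element $\lambda(f_j)$, the cp map $\Psi_k$, the operator-valued weight $T\colon\cN^+\to\cM^+_{\mathrm{ext}}$ and the $\ast$-isomorphism $\pi^{-1}$.

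Three things must then be verified. The weight estimate $\varphi\circ\Phi_{k,j}\le\varphi$ follows by writing $\varphi(\Phi_{k,j}(x))=\tilde\varphi(\lambda(f_j)\Psi_k(\pi(x))\lambda(f_j))$ for $x\in\cM^+$ (using $\tilde\varphi=\varphi\circ\pi^{-1}\circ T$), noting that $\lambda(f_j)$ commutes with $D_\varphi$ and is a contraction, so that compression by it is $\tilde\varphi$-reducing (a short computation with the trace $\tau$ and $(D\tilde\varphi/D\tau)_t=\lambda_t$), and combining this with $\tilde\varphi\circ\Psi_k\le\tilde\varphi$. For compactness and convergence I would factor the KMS $L^2$-implementation as $\Phi_{k,j}^{(2)}=R_j\,\Psi_k^{(2)}\,\iota$, where $\iota\colon L^2(\cM,\varphi)\to L^2(\cN,\tilde\varphi)$ is the bounded implementation of $\pi$ and $R_j\colon L^2(\cN,\tilde\varphi)\to L^2(\cM,\varphi)$ the bounded implementation of $z\mapsto\pi^{-1}T(\lambda(f_j)z\lambda(f_j))$. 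Since $\Psi_k^{(2)}$ is compact and $\iota,R_j$ are bounded, $\Phi_{k,j}^{(2)}$ is compact; moreover $R_j\iota$ is the implementation of $x\mapsto\pi^{-1}T(\lambda(f_j)\pi(x)\lambda(f_j))$, which converges strongly to $\mathrm{id}$ by Lemma~\ref{Lem=Convergence}. As $\|R_j(\Psi_k^{(2)}-\mathrm{id})\iota\xi\|\to0$ for each fixed $j$ as $k\to\infty$, a diagonal $3\epsilon$-argument over an increasing family of finite subsets of the dense set $D_\varphi^{\frac{1}{4}}\mathcal{T}_\varphi^2 D_\varphi^{\frac{1}{4}}$ produces maps $\Phi_{k(j),j}$ realising the symmetric Haagerup property of $(\cM,\varphi)$.

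For the converse I would bootstrap from the implication just proved. First, tensoring approximating maps $\Phi_k$ of $(\cM,\varphi)$ with the finite compressions $\Theta_l(a)=p_la p_l$ ($p_l\uparrow1$ finite-rank projections on $L^2(\mathbb{R})$) shows that $(\cM\vnot B(L^2(\mathbb{R})),\varphi\ot\Tr)$ has the symmetric Haagerup property: the maps $\Phi_k\ot\Theta_l$ are cp and $(\varphi\ot\Tr)$-reducing, and $\Phi_k^{(2)}\ot\Theta_l^{(2)}$ is compact (the second factor being finite-rank on the Hilbert--Schmidt space) and strongly convergent to the identity along a diagonal. By the duality isomorphism recalled before the statement, $\cM\vnot B(L^2(\mathbb{R}))\cong\cN\rtimes_\theta\mathbb{R}$ carries $\varphi\ot\Tr$ to the double dual weight $\tilde{\tilde\varphi}=\tilde\varphi\circ S$, so $(\cN\rtimes_\theta\mathbb{R},\tilde{\tilde\varphi})$ has the property. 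I would then repeat the compression argument one level up, descending from $(\cN\rtimes_\theta\mathbb{R},\tilde{\tilde\varphi})$ to $(\cN,\tilde\varphi)$ via the operator-valued weight $S$ and the $S$-analogue of Lemma~\ref{Lem=Convergence}.

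I expect the main obstacle to be the first implication, and within it the coordination of the limits in $k$ and $j$: one must check that the compression $x\mapsto\pi^{-1}T(\lambda(f_j)\pi(x)\lambda(f_j))$ is genuinely $\varphi$-reducing (which rests on $\lambda(f_j)$ being a positive contraction, as normalised in Lemma~\ref{Lem=Convergence}) and that the $R_j$ in the factorisation $\Phi_{k,j}^{(2)}=R_j\Psi_k^{(2)}\iota$ are uniformly bounded, so that the diagonal argument closes; the delicate feature is that $\iota$ and $R_j$ move between two distinct Haagerup $L^2$-spaces. For the converse the remaining input is the $S$-version of Lemma~\ref{Lem=Convergence}, established by the same computation using the modular structure of the relevant dual weight, even though the implementing action is now the dual action $\theta$ rather than a modular automorphism group.
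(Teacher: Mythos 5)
Your overall scheme---compress by the Gaussians $\lambda(f_j)$, push down to $\cM$ via the operator-valued weight $T$, control the limit in $j$ by Lemma \ref{Lem=Convergence}, and prove the converse by tensoring up to $\cM \vnot B(L^2(\mathbb{R})) \cong \cN\rtimes_\theta\mathbb{R}$ and descending via $S$---is exactly the one the paper follows. However, there is a genuine gap in your forward direction: you work relative to the \emph{unperturbed} dual weight $\tilde\varphi$, and both of your key claims fail for it. The dual action $\theta$ fixes $\pi(\cM)$ pointwise, so $T(\pi(x))=\int_{\mathbb{R}}\theta_s(\pi(x))\,ds=+\infty$ for every nonzero $x\in\cM^+$; hence $\tilde\varphi\circ\pi\equiv+\infty$ on $\cM^+\setminus\{0\}$, i.e.\ $\pi(\cM)\cap\mathfrak{n}_{\tilde\varphi}=\{0\}$. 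Consequently (a) there is no bounded KMS implementation $\iota\colon L^2(\cM,\varphi)\to L^2(\cN,\tilde\varphi)$ of $\pi$: Lemma \ref{Lem=L2Interpolation} would require $\tilde\varphi\circ\pi\le\varphi$, and the would-be image vectors $D_{\tilde\varphi}^{\frac{1}{4}}\pi(x)D_{\tilde\varphi}^{\frac{1}{4}}$ are not even well-defined elements of $L^2(\cN,\tilde\varphi)$; and (b) your weight estimate is vacuous: the chain
\[
\varphi(\Phi_{k,j}(x))=\tilde\varphi\bigl(\lambda(f_j)\Psi_k(\pi(x))\lambda(f_j)\bigr)\le\tilde\varphi\bigl(\Psi_k(\pi(x))\bigr)\le\tilde\varphi(\pi(x))
\]
terminates in $+\infty$, not in $\varphi(x)$, so knowing $\tilde\varphi\circ\Psi_k\le\tilde\varphi$ for maps adapted to $\tilde\varphi$ cannot be converted into $\varphi\circ\Phi_{k,j}\le\varphi$ in this way.

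The paper's repair is precisely to change the weight \emph{before} choosing the approximating maps: set $\tilde\varphi_j=\tilde\varphi(\lambda(f_j)\,\cdot\,\lambda(f_j))$, which (using $\|f_j\|_{L^2(\mathbb{R})}=1$) satisfies $\tilde\varphi_j\circ\pi=\varphi$ exactly. Since $\cN$ is semifinite, Proposition \ref{Prop=RadonNikodym} together with Lemma \ref{Lem=ExtraJolissaint} shows that $(\cN,\tilde\varphi_j)$ has the symmetric Haagerup property with \emph{contractive} approximating maps $\Phi_k^{(j)}$, whose KMS implementations $T_k^{(j)}$ act on $L^2(\cN,\tilde\varphi_j)$; and it is for the pair $(\varphi,\tilde\varphi_j)$ that your $\iota$ and $R_j$ exist---the paper's maps $U_j$ and $V_j$ are contractive by Lemma \ref{Lem=L2Interpolation} precisely because $\tilde\varphi_j\circ\pi=\varphi$ and $\varphi\circ\pi^{-1}\circ T(\lambda(f_j)\,\cdot\,\lambda(f_j))=\tilde\varphi_j$. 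With that substitution your factorisation $S_k^{(j)}=V_jT_k^{(j)}U_j$, the compactness claim, the weight estimate (now $\varphi\circ\Psi_k^{(j)}\le\varphi$ follows from $\tilde\varphi_j\circ\Phi_k^{(j)}\le\tilde\varphi_j$), and the diagonal $3\epsilon$-argument all close, with the uniform boundedness you worried about supplied by contractivity of $\Phi_k^{(j)}$, $U_j$, $V_j$. The same correction is needed in your converse: $\tilde{\tilde\varphi}$ is likewise infinite on the embedded copy of $\cN$, so descending from $(\cN\rtimes_\theta\mathbb{R},\tilde{\tilde\varphi})$ to $(\cN,\tilde\varphi)$ one must again compress the weight first, which is what the paper does with $\nu(f_j)=1\ot f_j$, the centralizer projections $e_j=1\ot\chi_{[-j,j]}$, and the maps $e_j\Phi_k(\,\cdot\,)e_j$ adapted to $\tilde{\tilde\varphi}_j$.
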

\begin{proof}
Let $f_j(t) = \sqrt{\frac{j}{\pi}} e^{-\frac{1}{2} j t^2 }$, so that from the Fourier transform of
$f_j$, we see that the support of $\lambda(f_j)$ in $\cN$ equals 1 and that $\lambda(f_j) \geq 0$.
Furthermore $\lambda(f_j)$ is in the centralizer of $\tilde{\varphi}$ \cite{TakII}. Note also that
$\Vert f_j \Vert_{L^2 \mathbb{R}} = 1$. Define $\tilde{\varphi}_j( \: \cdot \:) = \tilde{\varphi}(
\lambda(f_j) \: \cdot \: \lambda(f_j))$. We see from Proposition \ref{Prop=RadonNikodym} together with the fact that $\cN$ is semifinite and Lemma \ref{Lem=ExtraJolissaint} that
$(\cN, \tilde{\varphi}_j)$ has the symmetric Haagerup property and that the approximating maps may
be chosen contractive. Let $\Phi_k^{(j)}$ be the completely positive contractive maps witnessing
this with $T_k^{(j)}$ their KMS $L^2$-implementations. Put
\[
\Psi_k^{(j)}( \: \cdot \:) = \pi^{-1} \circ T(\lambda(f_j) \Phi_k^{(j)} ( \pi( \: \cdot \: ) ) \lambda(f_j)  ).
\]
Then $\Psi_k^{(j)}$ is completely positive and $\varphi \circ \Psi_k^{(j)} \leq \varphi$ (see \cite[Lemma 5.2]{CasSka}). We need to show that the corresponding KMS $L^2$-implementations $S_k^{(j)}$ are compact and converge to the identity strongly for a suitable choice of a sequence in $j$ and $k$. Let
\[
U_j: L^2(\cM, \varphi) \rightarrow L^2(\cN, \tilde{\varphi}_j): D_\varphi^{\frac{1}{4}} x D_\varphi^{\frac{1}{4}} \mapsto D_{\tilde{\varphi}_j}^{\frac{1}{4}} \pi( x) D_{\tilde{\varphi}_j}^{\frac{1}{4}},
\]
and
\[
V_j: L^2(\cN, \tilde{\varphi}_j) \rightarrow L^2(\cM, \varphi): D_{\tilde{\varphi}_j}^{\frac{1}{4}} x D_{\tilde{\varphi}_j}^{\frac{1}{4}} \mapsto D_\varphi^{\frac{1}{4}} \pi^{-1} \circ T( \lambda(f_j) x \lambda(f_j) ) D_\varphi^{\frac{1}{4}}.
\]
It follows from Lemma \ref{Lem=L2Interpolation} and \cite[Lemma 3.5]{HaaKra} that these are well-defined contractive maps. Indeed, for example: let $x \in \cM^+$, then $\tilde{\varphi}_{j}(x) = \varphi \circ \pi^{-1} \circ T(\lambda(f_j) x \lambda(f_j)) = \varphi(x)$. Then  Lemma \ref{Lem=L2Interpolation} yields that $U_j$ is contractive. A similar argument holds for $V_j$.

 Now consider the composition $V_j T_k^{(j)} U_j$. We have,
\[
\begin{split}
& V_j T_k^{(j)} U_j (D_\varphi^{\frac{1}{4}} x D_\varphi^{\frac{1}{4}} ) \\
= &  V_j T_k^{(j)}   (D_{\tilde{\varphi}_j}^{\frac{1}{4}} \pi(x) D_{\tilde{\varphi}_j}^{\frac{1}{4}} ) \\
= & V_j D_{\tilde{\varphi}_j}^{\frac{1}{4}}  \Phi_k^{(j)}( \pi(x) ) D_{\tilde{\varphi}_j}^{\frac{1}{4}} \\
= & D_\varphi^{\frac{1}{4}} \pi^{-1} \circ T ( \lambda(f_j) \Phi_k^{(j)}(\pi(x)) \lambda(f_j) ) D_\varphi^{\frac{1}{4}}\\
= & S_k^{(j)} D_\varphi^{\frac{1}{4}}  x D_\varphi^{\frac{1}{4}}.
\end{split}
\]
This proves that $S_k^{(j)}$ is compact. Let $F \subseteq \mathcal{T}_\varphi^2$ be a finite set and let $\epsilon > 0$. First choose $j \in \mathbb{N}$ such that for every $x \in F$ we have,
\[
\Vert D_\varphi^{\frac{1}{4}} \pi^{-1} \circ T ( \lambda(f_j) \pi(x) \lambda(f_j) ) D_\varphi^{\frac{1}{4}} - D_\varphi^{\frac{1}{4}}x D_\varphi^{\frac{1}{4}} \Vert_2 \leq \epsilon.
\]
This is possible by Lemma \ref{Lem=Convergence}. Then choose $k \in \mathbb{N}$ such that for every $x \in F$ we have,
\[
\Vert T_k^{(j)} (D_{\tilde{\varphi}_j}^{\frac{1}{4}}\pi(x)D_{\tilde{\varphi}_j}^{\frac{1}{4}}) -  D_{\tilde{\varphi}_j}^{\frac{1}{4}}\pi(x)D_{\tilde{\varphi}_j}^{\frac{1}{4}} \Vert_2 \leq \epsilon.
\]
In all this gives,
\[
\begin{split}
&\Vert S_k^{(j)} (D_\varphi^{\frac{1}{4}} x D_\varphi^{\frac{1}{4}}) - (D_\varphi^{\frac{1}{4}} x D_\varphi^{\frac{1}{4}}) \Vert_2 \\
= & \Vert V_j T_k^{(j)} U_j (D_\varphi^{\frac{1}{4}} x D_\varphi^{\frac{1}{4}}) - V_j U_j (D_\varphi^{\frac{1}{4}} x D_\varphi^{\frac{1}{4}} )\Vert_2 + \Vert  V_j U_j (D_\varphi^{\frac{1}{4}} x D_\varphi^{\frac{1}{4}} )-   D_\varphi^{\frac{1}{4}} x D_\varphi^{\frac{1}{4}} \Vert_2 \leq 2 \epsilon.
\end{split}
\]
This proves the claim since the $S_k^{(j)}$'s are bounded uniformly in $k,j \in \mathbb{N}$.

\vspace{0.3cm}

We now prove the converse. The proof is essentially the same and we indicate the main differences. Firstly, since $(\cM, \varphi)$ has the symmetric Haagerup property so does $(\cM \otimes L^2(\mathbb{R}), \varphi \otimes {\rm Tr})$. Recall that $\varphi \otimes {\rm Tr}$ equals the double dual weight $\tilde{\tilde{\varphi}}$. Let $F_j = [-j, j]$ and set $f_j = \vert F_j \vert^{-1/2} \chi_{F_j}$. Consider $\nu(f_j) := 1 \otimes f_j \in \cM \otimes B(L^2(\mathbb{R}))$. Also set $e_j = \nu(\chi_{F_j}):= 1 \otimes \chi_{F_j}$ the support projection of $\nu(f_j)$. Set $\tilde{\tilde{\varphi}}_j( \: \cdot \: ) = \tilde{\varphi}( \nu(f_j) \: \cdot \: \nu(f_j))$. Since $\cM \otimes B(L^2(\mathbb{R}))$ has the symmetric Haagerup property, say with cp maps $\Phi_k$, so does $(e_j (\cM \otimes B(L^2(\mathbb{R})) ) e_j, \tilde{\tilde{\varphi}}_j)$ and we may take cp maps $\Phi_k^{(j)}( \: \cdot \: ) = e_j \Phi_k(\: \cdot \:) e_j$ (the proof is straightforward and uses that $e_j = \vert F_j\vert^{1/2} \nu(f_j)$ is in the centralizer of $\tilde{\tilde{\varphi}}$). Then put $\Psi_k^{(j)}( \: \cdot \:) = S(\nu(f_j)  \Phi_k^{(j)}( e_j \: \cdot \: e_j) \nu(f_j))$. As in the first part of the proof one can check that these maps witness the symmetric Haagerup property for $(\cN, \tilde{\varphi})$. At the places where Lemma \ref{Lem=Convergence} and \cite[Lemma 3.5]{HaaKra} are used one uses the computations made in \cite[Lemma 6.1 and Proposition 6.4]{CasSka} instead. The proof is similar to the considerations in \cite[Section 6]{CasSka}.
\end{proof}

\begin{lem}
Let $\cM$ be a semi-finite von Neumann algebra and $\varphi, \psi$ two normal, semifinite, faithful weights on $\cM$. $(\cM, \varphi)$ has the symmetric Haagerup property if and only if $(\cM, \psi)$ has the symmetric Haagerup property.
\end{lem}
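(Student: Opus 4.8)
The plan is to use semifiniteness of $\cM$ to fix a normal semifinite faithful trace $\tau$, and to prove that for an arbitrary normal semifinite faithful weight $\varphi$ the pair $(\cM,\varphi)$ has the symmetric Haagerup property if and only if $(\cM,\tau)$ does; the statement then follows by applying this to both $\varphi$ and $\psi$ and using transitivity. By the Radon--Nikodym theorem for weights I would write $\varphi(\,\cdot\,)=\tau(h\,\cdot\,h)$ for a positive self-adjoint operator $h$ affiliated with $\cM$ with trivial kernel, determined by $(D\varphi/D\tau)_t=h^{2it}$. Since $\tau$ is tracial, $\sigma^\varphi_t=\mathrm{Ad}(h^{2it})$, so the centralizer of $\varphi$ equals $\{h\}'\cap\cM$, and $h^{it}$ commutes with everything in it; thus $h$ is in fact affiliated with the centre of $\cM^\varphi$. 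This is the feature that will let me cut down by projections lying simultaneously in the centralizers of $\varphi$ and of $\tau$.

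First I would reduce to finite corners. Put $f_n=\chi_{(1/n,n)}(h)$; these projections belong to the centralizer of $\varphi$ and increase strongly to $1$, and on $\cM_n:=f_n\cM f_n$ the element $h_n:=hf_n$ is bounded and boundedly invertible. Consequently on $\cM_n$ the trace $\tau_n:=\tau|_{\cM_n}$ and the weight $\varphi_n:=\varphi|_{\cM_n}=\tau_n(h_n\,\cdot\,h_n)$ dominate one another, so that a projection in the centralizer $\{h_n\}'\cap\cM_n$ is $\tau_n$-finite precisely when it is $\varphi_n$-finite. As the restriction of $\varphi_n$ to its centralizer is a normal, semifinite, faithful trace (a standard consequence of modular theory), there are $\tau_n$-finite projections in $\{h_n\}'\cap\cM_n$ increasing to $f_n$; a diagonal choice then produces a single sequence $(p_j)_j$ of $\tau$-finite projections, each commuting with $h$, hence lying in the centralizers of both $\varphi$ and $\tau$, and converging strongly to $1$.

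On each corner $\cM_j:=p_j\cM p_j$, which is now a finite von Neumann algebra since $p_j$ is $\tau$-finite, the restricted weight is $(p_j\tau p_j)(hp_j\,\cdot\,hp_j)$ with $hp_j$ positive and of trivial kernel. Hence the finite-algebra Proposition \ref{Prop=RadonNikodym} applies and shows that $(\cM_j,\varphi|_{\cM_j})$ has the symmetric Haagerup property if and only if $(\cM_j,\tau|_{\cM_j})$ does, in either case with contractive approximating maps. It remains to transport this equivalence between the corners and $\cM$ by Proposition \ref{Prop=Corner}: its forward implication carries the symmetric Haagerup property of $(\cM,\varphi)$ (resp.\ $(\cM,\tau)$) down to all the corners $(\cM_j,\varphi|_{\cM_j})$ (resp.\ $(\cM_j,\tau|_{\cM_j})$), while its converse rebuilds the property on $\cM$ from the corners. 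Combining these with the corner equivalence above yields that $(\cM,\varphi)$ has the symmetric Haagerup property if and only if $(\cM,\tau)$ does, and transitivity completes the proof.

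The delicate point, and the reason for descending all the way to finite corners, is contractivity. The converse half of Proposition \ref{Prop=Corner} requires the approximating maps on the corners to be contractive, and for a genuinely non-tracial weight on an infinite (merely semifinite) corner none of the earlier results furnishes this directly. It is precisely on the finite corners $\cM_j$ that the key Lemma \ref{Lem=MainResult}, packaged inside Proposition \ref{Prop=RadonNikodym}, produces contractive maps; securing $\tau$-finite projections inside the centralizer of $\varphi$ (which is where affiliation of $h$ to the centre of $\cM^\varphi$ is used) is therefore the crux of the argument.
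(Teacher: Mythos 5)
Your proof is correct, and its skeleton coincides with the paper's: fix a normal semifinite faithful trace $\tau$ on $\cM$, reduce to the equivalence of $(\cM,\varphi)$ and $(\cM,\tau)$ by transitivity, write $\varphi=\tau(h\,\cdot\,h)$ by the Pedersen--Takesaki Radon--Nikodym theorem, and feed this into Proposition \ref{Prop=RadonNikodym}. The difference is one of completeness rather than of route. The paper's proof is a single sentence which applies Proposition \ref{Prop=RadonNikodym} to the semifinite algebra $\cM$, although that proposition is stated for \emph{finite} algebras; moreover its proof ultimately rests on Lemma \ref{Lem=MainResult}, which requires a tracial \emph{state}, so for a merely semifinite $\cM$ the spectral corners $\chi_{(1/n,n)}(h)\cM\chi_{(1/n,n)}(h)$ appearing there are not covered by any of the earlier results. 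Your intermediate step --- extracting $\tau$-finite projections $p_j$ lying in $\{h\}'\cap\cM$, hence in the centralizers of both weights, using mutual domination of $\varphi$ and $\tau$ on the corners where $h$ is bounded with bounded inverse together with semifiniteness of a weight restricted to its centralizer --- is exactly the reduction that legitimises the use of the finite-algebra machinery, after which Proposition \ref{Prop=Corner} transports the property (with contractive maps, as its converse half demands) back to $\cM$. In other words, you have supplied the argument that the paper leaves implicit, and your version is if anything the more careful one. Two small points deserve attention: the diagonal extraction of the sequence $(p_j)$ uses the standing separable-predual assumption, and on each corner you should normalise the finite trace $p_j\tau p_j$ to a tracial state (rescaling $hp_j$ accordingly) before invoking Lemma \ref{Lem=MainResult} inside Proposition \ref{Prop=RadonNikodym}; both are harmless.
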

\begin{proof}
This follows from Proposition \ref{Prop=RadonNikodym} since we may assume that $\psi$ is a trace and then $\varphi( \: \cdot \:) = \psi( h \: \cdot \: h)$ for some positive selfadjoint operator $h$ as in Proposition \ref{Prop=RadonNikodym}.
\end{proof}

Now we can conclude the following main result.

\begin{tw}
Let $\cM$ be a von Neumann algebra and $\varphi, \psi$ two normal, semifinite, faithful weights on $\cM$. $(\cM, \varphi)$ has the symmetric Haagerup property if and only if $(\cM, \psi)$ has the symmetric Haagerup property.
\end{tw}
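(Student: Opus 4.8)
The plan is to reduce the general, possibly type III, situation to the semifinite case already settled in the preceding lemma, using the core $\cN = \cM \rtimes_{\sigma^\varphi} \mathbb{R}$ as an intermediary. The essential structural input is that the core, together with its canonical trace, does not depend — up to a canonical isomorphism — on the choice of the normal semifinite faithful weight used to construct it; this is part of the Connes--Takesaki theory of the noncommutative flow of weights, and is precisely the Takesaki duality picture $\cN \rtimes_\theta \mathbb{R} \cong \cM \otimes B(L^2(\mathbb{R}))$ recalled in the discussion preceding Proposition \ref{Prop=CrossedProduct}. I would begin by recording this fact explicitly: for the two weights $\varphi$ and $\psi$ on $\cM$ there is a canonical isomorphism $\cM \rtimes_{\sigma^\varphi} \mathbb{R} \cong \cM \rtimes_{\sigma^\psi} \mathbb{R}$ which is the identity on the embedded copy of $\cM$ and intertwines the dual actions, so that both dual weights $\tilde\varphi$ and $\tilde\psi$ may be regarded as normal semifinite faithful weights on one and the same semifinite von Neumann algebra, which I again denote $\cN$.

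With this identification in place, the equivalence follows by concatenating three facts already available. First, Proposition \ref{Prop=CrossedProduct} gives that $(\cM, \varphi)$ has the symmetric Haagerup property if and only if $(\cN, \tilde\varphi)$ does. Secondly, since $\cN$ is semifinite, the preceding lemma applies to the pair of weights $\tilde\varphi$ and $\tilde\psi$ on $\cN$ and shows that $(\cN, \tilde\varphi)$ has the symmetric Haagerup property if and only if $(\cN, \tilde\psi)$ does. Thirdly, applying Proposition \ref{Prop=CrossedProduct} once more, now to $\psi$, yields that $(\cN, \tilde\psi)$ has the symmetric Haagerup property if and only if $(\cM, \psi)$ does. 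Reading these three equivalences in sequence produces exactly the asserted statement, with no estimates required beyond those already established.

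The only genuinely non-formal point — the one I would state carefully rather than compute — is the canonicity of the core. One must make sure that the abstract isomorphism of cores really carries the setting into a single fixed semifinite algebra on which both $\tilde\varphi$ and $\tilde\psi$ live, so that the semifinite lemma can legitimately be invoked for two weights on \emph{the same} algebra; this is where the Connes cocycle $(D\tilde\psi/D\tilde\varphi)_t$ and the weight-independence of $\cN$ enter. Once this identification is granted, the remainder of the argument is a purely formal chain of equivalences.
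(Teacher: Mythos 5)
Your argument is correct and is essentially the paper's own proof: the paper simply cites \cite[Theorem 5.6]{CasSka}, whose argument is exactly the chain you describe --- the crossed-product equivalence (Proposition \ref{Prop=CrossedProduct}, whose converse direction already packages the tensor stability with $B(L^2(\mathbb{R}))$ and Takesaki duality that the paper's proof alludes to when mentioning \cite[Proposition 3.4 and Lemma 3.5]{CasSka}), weight-independence of the symmetric Haagerup property for the semifinite core (the lemma immediately preceding the theorem), and the Connes--Takesaki fact that the core is canonically independent of the weight. The one point to phrase carefully is the one you flag yourself: the canonical isomorphism $\cM \rtimes_{\sigma^\varphi} \mathbb{R} \cong \cM \rtimes_{\sigma^\psi} \mathbb{R}$ comes from Connes' cocycle derivative theorem for the weights $\varphi$ and $\psi$ themselves (rather than literally from Takesaki duality), and under it $\tilde{\varphi}$ is transported to a normal semifinite faithful weight on the common core, to which the semifinite lemma is then applied together with $\tilde{\psi}$.
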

\begin{proof}
The proof is now a mutatis mutandis copy of \cite[Theorem 5.6]{CasSka}. Note that it requires that $B(\Hil)$ has the symmetric Haagerup property, which follows from (the proof of) \cite[Proposition 3.4]{CasSka}. Also the proof of \cite[Lemma 3.5]{CasSka} remains valid.
\end{proof}

In the next lemma crossed product duality arguments are applied to show that Lemma \ref{Lem=MainResult} can be extended to arbitrary weights.

\begin{lem}\label{Lem=Contractive}
Let $(\cM, \varphi)$ be a pair of a von Neumann algebra with normal, semifinite, faithful weight $\varphi$. If $(\cM, \varphi)$ has the symmetric Haagerup property, then the completely positive maps $\{\Phi_k\}_{k \in \mathbb{N}}$ witnessing this may be chosen contractive.
\end{lem}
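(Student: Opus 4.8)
The plan is to reduce to the semifinite core via Proposition~\ref{Prop=CrossedProduct} and then to observe that the procedure reconstructing approximating maps on $\cM$ out of approximating maps on the core automatically respects the unit bound; thus the only genuinely new input over Proposition~\ref{Prop=CrossedProduct} is an elementary monotonicity check. First I would pass to the core $\cN = \cM \rtimes_{\sigma^\varphi}\mathbb{R}$. Since $(\cM,\varphi)$ has the symmetric Haagerup property, the converse assertion of Proposition~\ref{Prop=CrossedProduct} shows that $(\cN,\tilde{\varphi})$ has it as well. Now $\cN$ is semifinite, so by the weight-independence established above for semifinite algebras together with Proposition~\ref{Prop=RadonNikodym} and Lemma~\ref{Lem=ExtraJolissaint}, the perturbed pairs $(\cN,\tilde{\varphi}_j)$, where $\tilde{\varphi}_j(\,\cdot\,) = \tilde{\varphi}(\lambda(f_j)\,\cdot\,\lambda(f_j))$ and $f_j$ is the Gaussian of Lemma~\ref{Lem=Convergence}, admit \emph{contractive} completely positive approximating maps $\Phi_k^{(j)}$. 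This is precisely the family already used in the forward direction of the proof of Proposition~\ref{Prop=CrossedProduct}.

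Next I would rerun that forward direction verbatim to produce approximating maps for $(\cM,\varphi)$, namely $\Psi_k^{(j)}(\,\cdot\,) = \pi^{-1}\circ T(\lambda(f_j)\,\Phi_k^{(j)}(\pi(\,\cdot\,))\,\lambda(f_j))$. The proof of Proposition~\ref{Prop=CrossedProduct} already guarantees that these are completely positive and $\varphi$-reducing, and that a suitable diagonal subfamily in $(k,j)$ has compact KMS $L^2$-implementations converging strongly to the identity of $L^2(\cM,\varphi)$. The one remaining point is contractivity. Since $\lambda(f_j)\geq 0$ and $\Phi_k^{(j)}(1)\leq 1$, monotonicity of conjugation by $\lambda(f_j)$ followed by positivity of the operator-valued weight $\pi^{-1}\circ T$ give
\[
\Psi_k^{(j)}(1) = \pi^{-1}\circ T\bigl(\lambda(f_j)\,\Phi_k^{(j)}(1)\,\lambda(f_j)\bigr) \leq \pi^{-1}\circ T\bigl(\lambda(f_j)^2\bigr) = 1,
\]
the last equality being the identity $\pi^{-1}\circ T(\lambda(f_j)^2)=1$ already invoked in Proposition~\ref{Prop=CrossedProduct} to see that $\tilde{\varphi}_j$ restricts to $\varphi$ on $\cM$. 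Hence the reconstructed maps satisfy $\Psi_k^{(j)}(1)\le 1$, which is exactly the assertion of the lemma.

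I do not expect a serious obstacle here: all the analytic difficulty sits in Lemma~\ref{Lem=MainResult} and in Proposition~\ref{Prop=CrossedProduct}, and the present statement is essentially the remark that their constructions preserve the bound $\Phi(1)\le 1$. The only thing to verify with a little care is the displayed chain of inequalities, i.e.\ that $\lambda(f_j)$ is a positive element of the centralizer of $\tilde{\varphi}$ with $\pi^{-1}\circ T(\lambda(f_j)^2)=1$, so that pushing a completely positive contraction through the channel $\pi^{-1}\circ T(\lambda(f_j)\,\cdot\,\lambda(f_j))$ again yields a contraction. Once this is in place the lemma follows immediately, and in particular Lemma~\ref{Lem=MainResult} has now been extended from the special weights $\varphi(\,\cdot\,)=\tau(h\,\cdot\,h)$ to arbitrary normal semifinite faithful weights.
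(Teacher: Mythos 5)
Your overall architecture coincides with the paper's: pass to the core $\cN=\cM\rtimes_{\sigma^\varphi}\mathbb{R}$, obtain \emph{contractive} approximating maps there, and push them back to $\cM$ through the channel $\pi^{-1}\circ T(\lambda(f_j)\,\cdot\,\lambda(f_j))$. Your concluding computation is also correct: conjugation by $\lambda(f_j)$ and monotonicity of the operator valued weight $T$ give $\Psi_k^{(j)}(1)\leq\pi^{-1}\circ T(\lambda(f_j)^2)=\Vert f_j\Vert_{L^2}^2\, 1=1$, and this is indeed the (implicit) reason contractivity passes from the core to $\cM$ ``by the proof of Proposition \ref{Prop=CrossedProduct}''. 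The gap lies in where you get the contractive maps on the core. You invoke the semifinite weight-independence, Proposition \ref{Prop=RadonNikodym} and Lemma \ref{Lem=ExtraJolissaint} to produce contractive approximating maps for $(\cN,\tilde{\varphi}_j)$. But Proposition \ref{Prop=RadonNikodym} is stated, and proved, for a \emph{finite} von Neumann algebra: its engine is Lemma \ref{Lem=MainResult}, which requires a faithful normal tracial \emph{state} and a bounded, boundedly invertible density, and the reduction to the corners $e_n=\chi_{(1/n,n)}(\cdot)$ of the density only reaches that setting when those corners carry finite trace. The core is semifinite but not finite, the relevant densities (such as $h^{1/2}\lambda(f_j)$) are not boundedly invertible, and for a general bounded injective density the projections $\chi_{(1/n,n)}(\cdot)$ need not have finite trace; so the citation does not apply as stated. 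Pointing to ``the family already used in the forward direction of Proposition \ref{Prop=CrossedProduct}'' does not close this, because the statement of that proposition asserts nothing about contractivity, and the construction of that family inside its proof rests on exactly the same out-of-scope citation.

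What is missing is the one genuinely new ingredient of the paper's proof of this lemma, and it is special to the core: if $h$ is the generator of the left regular representation and $\tau$ is the canonical trace on $\cN$ with $\tau(h^{\frac{1}{2}}\,\cdot\,h^{\frac{1}{2}})=\tilde{\varphi}$, then by \cite[Lemma 5]{TerpI} the spectral projections $p_n=\chi_{[1/n,n]}(h)$ satisfy $\tau(p_n)<\infty$. Hence $p_n\cN p_n$ is a \emph{finite} algebra on which $\tau$ and $\tilde{\varphi}$ restrict to faithful positive functionals with bounded, boundedly invertible relative density $h^{\frac{1}{2}}p_n$, so Lemma \ref{Lem=MainResult} legitimately applies there; since $p_n$ lies in the centralizer of $\tilde{\varphi}$ and $p_n\to 1$ strongly, Proposition \ref{Prop=Corner} (whose converse direction is formulated precisely for contractive maps) lifts the contractivity back to $(\cN,\tilde{\varphi})$. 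With that established, your transfer-plus-monotonicity step does finish the proof exactly as in the paper; without it, the claim that the core admits contractive approximating maps is unsupported.
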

\begin{proof}
Assume that $(\cM,\varphi)$ has the symmetric Haagerup property. Let $\sigma^\varphi$ be the modular automorphism group of $\varphi$. Let $\cN = \cM \rtimes_{\sigma^\varphi} \mathbb{R}$ be the crossed product. We have $\cM \otimes B(L^2 \mathbb{R}) = \cN \rtimes_\theta \mathbb{R}$ where $\theta$ is the dual action on $\cN$. Since $\cM$ has the symmetric Haagerup property, so has $\cM \otimes B(L^2 \mathbb{R})$ (the proof is the same as \cite[Proposition 3.4 and Lemma 3.5]{CasSka}) and therefore $\cN$ has the symmetric Haagerup property by Proposition \ref{Prop=CrossedProduct}. If we can prove that the completely positive maps witnessing this for $\cN$ can be chosen contractive, then also $\cM$ has the symmetric Haagerup property with contractive completely positive maps by the proof of Proposition \ref{Prop=CrossedProduct}.

Let $h$ affiliated with $\cN$ be the generator of the left regular representation, i.e. $h^{it} = \lambda_t \in \cN$. Set $p_n = \chi_{[1/n, n]}(h)$. $\cN$ is well-known to be semifinite and carries a normal semifinite faithful trace $\tau$ such that $\tau(h^{\frac{1}{2}} \: \cdot \: h^{\frac{1}{2}}) = \tilde{\varphi}$. It follows from \cite[Lemma 5]{TerpI} that we have $\tau( p_n ) < \infty$. This means that the restrictions of both $\tau$ and $\tilde{\varphi}$ to $p_n \cN p_n$ are states and we have $\tau(p_n h^{\frac{1}{2}} \: \cdot \: h^{\frac{1}{2}} p_n) = \tilde{\varphi}( p_n \: \cdot \: p_n)$. From Proposition \ref{Prop=Corner} we know that $(p_n \cN p_n, \tilde{\varphi}(p_n \: \cdot \: p_n))$ has the symmetric Haagerup property and by Lemma \ref{Lem=MainResult}  we see that the completely positive maps of $\tilde{\varphi}(p_n \: \cdot \: p_n)$ may be taken contractive. From Proposition \ref{Prop=Corner}   it follows that $(\cN, \tilde{\varphi})$ has the Haagerup property with contractive maps. We conclude the lemma by the first paragraph.
\end{proof}

We now focus on the state case, beginning with a technical lemma, whose proof relies on ideas of \cite{TerpI}, \cite{TerpII}.

\begin{lem}\label{Lem=NCLP}
Let $\varphi$ and $\psi$ be normal states on a von Neumann algebra $\cM$. Suppose that $\psi \leq \varphi$. Then, there exists a $0 \leq c \leq 1$ such that $\psi = \varphi_c$, where $\varphi_c$ is defined by,
\[
\varphi_c(x) = \langle J x^\ast J \Lambda(c^{\frac{1}{2}}),\Lambda(c^{\frac{1}{2}}) \rangle.
\]
\end{lem}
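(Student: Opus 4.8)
The plan is to reduce the statement to the classical (Sakai) noncommutative Radon--Nikodym theorem, applied \emph{inside} the commutant $\cM'$, so that the resulting derivative is automatically forced into $(\cM')'=\cM$. Throughout I use that $\varphi$ is faithful (as standing), so that $\Omega:=\Lambda(1)=D_\varphi^{\frac12}$ is cyclic and separating for $\cM$ (hence also for $\cM'$) in the standard representation on $L^2(\cM,\varphi)$, that $J\Omega=\Omega$, and that $\varphi(x)=\langle x\Omega,\Omega\rangle$. First I would rewrite the target functional in a transparent form: since $Jx^*J\in\cM'$ commutes with the left action of $c^{\frac12}\in\cM$, and $c^{\frac12}$ is self-adjoint,
\[
\varphi_c(x)=\langle Jx^*J\, c^{\frac12}\Omega,\, c^{\frac12}\Omega\rangle=\langle Jx^*J\, c\,\Omega,\,\Omega\rangle,
\]
so that $\varphi_c$ is just the vector functional of $\Omega$ restricted to the image of $\cM'$, twisted by the prospective element $c$.

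Next I would transport $\psi$ from $\cM$ to $\cM'$. The map $\beta\colon\cM\to\cM'$, $\beta(x)=Jx^*J$, is a normal, linear, $*$-preserving anti-isomorphism (one checks directly $\beta(x)^*=\beta(x^*)$ and $\beta(x)\beta(y)=\beta(yx)$), and it carries $\cM_+$ bijectively onto $\cM'_+$. Hence $\rho:=\psi\circ\beta^{-1}$ is a normal positive functional on $\cM'$: if $y'=\beta(w)^*\beta(w)=\beta(ww^*)\ge 0$ then $\rho(y')=\psi(ww^*)\ge 0$. A short computation using $J\Omega=\Omega$ and the antiunitarity of $J$ gives $\langle\beta(x)\Omega,\Omega\rangle=\langle Jx^*\Omega,\Omega\rangle=\overline{\langle x^*\Omega,\Omega\rangle}=\varphi(x)$, i.e.\ $\omega_\Omega|_{\cM'}\circ\beta=\varphi$. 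Since also $\rho\circ\beta=\psi$ and $\beta$ is an order isomorphism onto $\cM'$, the hypothesis $\psi\le\varphi$ is exactly equivalent to $\rho\le\omega_\Omega|_{\cM'}$.

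Now the Radon--Nikodym theorem for the von Neumann algebra $\cM'$ equipped with the cyclic and separating vector $\Omega$ produces a (unique) element $c\in(\cM')'=\cM$ with $0\le c\le 1$ such that $\rho(y')=\langle y'\,c\,\Omega,\Omega\rangle$ for all $y'\in\cM'$. Specialising to $y'=\beta(x)=Jx^*J$ and reading the display of the first paragraph backwards yields $\psi(x)=\rho(Jx^*J)=\langle Jx^*J\,c\,\Omega,\Omega\rangle=\varphi_c(x)$ for every $x\in\cM$, which is the assertion. The only genuinely delicate point on this route is the bookkeeping: one passes through the anti-isomorphism $\beta$ precisely in order to push the Radon--Nikodym derivative into $\cM$ rather than $\cM'$, while retaining $0\le c\le 1$; everything else is direct verification.

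Alternatively, and closer to the ideas of \cite{TerpI}, \cite{TerpII}, one can argue entirely on the $L^1$-level: identifying $\psi$, $\varphi$ and $\varphi_c$ with the positive elements $h_\psi$, $D_\varphi$ and $D_\varphi^{\frac12}cD_\varphi^{\frac12}$ of $L^1(\cM,\varphi)$, the claim becomes the factorisation of $0\le h_\psi\le D_\varphi$ as $D_\varphi^{\frac12}cD_\varphi^{\frac12}$, i.e.\ formally $c=D_\varphi^{-\frac12}h_\psi D_\varphi^{-\frac12}$. Here the hard part is that $D_\varphi^{-\frac12}$ is unbounded and merely affiliated to the core, so one must show, by a Douglas-type approximation respecting the $\theta$-scaling that makes $c$ a genuine fixed point of the dual action, that $c$ is nevertheless a bounded element of $\cM$ with $0\le c\le 1$; I would prefer the commutant argument above since it sidesteps this unboundedness altogether.
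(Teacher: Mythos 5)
Your primary argument is correct, and it is genuinely different from the paper's proof; amusingly, the paper's proof is exactly the alternative you relegate to your final paragraph. The paper identifies $\psi$ and $\varphi$ with elements $D_\psi \leq D_\varphi$ of the Haagerup $L^1$-space (Proposition 15 of \cite{TerpI}), sets $c := D_\varphi^{-\frac{1}{2}}\cdot D_\psi\cdot D_\varphi^{-\frac{1}{2}}$, asserts that this is bounded with $0\leq c\leq 1$ (hence lies in $\cM$ viewed as the Haagerup $L^\infty$-space), and reads off $D_\psi = D_\varphi^{\frac{1}{2}}\cdot c\cdot D_\varphi^{\frac{1}{2}} \leftrightarrow \varphi_c$ from Eqn.~(38) of \cite{TerpII}; the boundedness-and-membership assertion, which the paper does not spell out, is precisely the Douglas-factorisation/$\theta$-homogeneity point you flag as the delicate step of that route. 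Your commutant argument sidesteps it: the rewriting $\varphi_c(x)=\langle Jx^*Jc\Omega,\Omega\rangle$, the properties of $\beta(x)=Jx^*J$, the identity $\omega_\Omega|_{\cM'}\circ\beta=\varphi$, and the equivalence $\psi\leq\varphi \Leftrightarrow \rho\leq\omega_\Omega|_{\cM'}$ are all verified correctly, and the Radon--Nikodym theorem applied to $(\cM',\Omega)$ indeed puts the derivative in $(\cM')'=\cM$. Two caveats. First, the theorem you need is the \emph{commutant} Radon--Nikodym theorem (for a von Neumann algebra $\cN$ with cyclic vector $\xi$ and normal $\rho\leq\omega_\xi$ there is a unique $t\in\cN'$, $0\leq t\leq 1$, with $\rho(\,\cdot\,)=\langle\,\cdot\;t\xi,\xi\rangle$; it has an elementary proof via the bounded sesquilinear form $(y\xi,z\xi)\mapsto\rho(z^*y)$), not Sakai's theorem, which would only give $\rho=\omega_\xi(h\,\cdot\,h)$ with $h$ in the algebra itself and would leave the derivative in $\cM'$ when applied to $\cM'$; your bookkeeping uses the former, so only the attribution in your opening sentence is off. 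Second, your route needs $\varphi$ faithful so that $\Omega$ is cyclic and separating, whereas the lemma assumes only normality and the $L^1$ proof needs no faithfulness; since $\varphi$ is faithful both under the paper's standing assumptions and in the lemma's only application (Proposition \ref{Prop=UnitalPreserving}, where $\psi=\varphi\circ\Phi_k$ may fail to be faithful but $\varphi$ does not), this costs nothing here, but it is worth recording as a hypothesis of your argument. In exchange, your proof stays entirely at the level of bounded operators on $L^2(\cM,\varphi)$, avoids $\tau$-measurable operators altogether, and makes facts used later --- such as the additivity $\varphi_{b}+\varphi_{c}=\varphi_{b+c}$, for which the paper invokes Corollary 12 of \cite{TerpII} --- evident, since $\varphi_c(x)=\langle Jx^*Jc\Omega,\Omega\rangle$ is visibly linear in $c$.
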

\begin{proof}
Let $D_\psi$ and $D_\varphi$ be the elements in the Haagerup $L^1$-space associated with $\cM$ corresponding to $\psi$ and $\varphi$, see Proposition 15 of \cite{TerpI}. Since $\psi \leq \varphi$ we have $D_\psi \leq D_\varphi$. Then, $c := D_\varphi^{-\frac{1}{2}}\cdot D_\psi \cdot D_\varphi^{-\frac{1}{2}}$ is bounded and in fact $0 \leq c \leq 1$. This implies that $c$ is contained in the Haagerup $L^\infty$-space and we have $D_\psi = D_\varphi^{\frac{1}{2}} \cdot c \cdot D_\varphi^{\frac{1}{2}}$. But the latter operator corresponds exactly to $\varphi_c$ by \cite[Eqn. (38)]{TerpII}.
\end{proof}

We are ready to show that the approximating maps in the state case can be chosen to be Markov.

\begin{propn}\label{Prop=UnitalPreserving}
Let $(\cM, \varphi)$ be a pair of a von Neumann algebra $\cM$ and a normal faithful state
$\varphi$. Suppose that $(\cM, \varphi)$ has the symmetric Haagerup property witnessed by
contractive approximating maps  $\Phi_k$. Then there exist Markov maps $\Psi_k$ that witness the
symmetric Haagerup property of $(\cM, \varphi)$.
\end{propn}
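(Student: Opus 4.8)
The plan is to repair each contractive map $\Phi_k$ by a single rank-one completely positive perturbation that fixes unitality and $\varphi$-invariance at once, and then to show this perturbation is negligible at the $L^2$-level. Write $a_k=\Phi_k(1)$, so contractivity gives $0\le a_k\le 1$, and set $\psi_k=\varphi\circ\Phi_k$, a normal positive functional with $\psi_k\le\varphi$. By Lemma~\ref{Lem=NCLP} there is $0\le c_k\le 1$ with $\psi_k=\varphi_{c_k}$, and since this functional corresponds to $D_\varphi^{\frac12}c_kD_\varphi^{\frac12}$ in the Haagerup $L^1$-picture, the defect $\varphi-\psi_k$ corresponds to $D_\varphi^{\frac12}(1-c_k)D_\varphi^{\frac12}$, i.e.\ $\varphi-\psi_k=\varphi_{1-c_k}$; this is exactly what makes the defect functional available as an honest $L^2$-vector. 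Put $\beta_k=\varphi(1-a_k)=1-\varphi(a_k)$, and note $\varphi_{1-c_k}(1)=\varphi(1-c_k)=1-\psi_k(1)=\beta_k$ as well. If $\beta_k=0$ then $a_k=1$ by faithfulness and $\psi_k=\varphi$, so $\Phi_k$ is already Markov; otherwise define
\[
\Psi_k(x)=\Phi_k(x)+\beta_k^{-1}\,\varphi_{1-c_k}(x)\,(1-a_k).
\]

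Next I would check that $\Psi_k$ is Markov. It is a sum of two normal completely positive maps (the second being a positive functional times the positive element $1-a_k$); it is unital since $\Psi_k(1)=a_k+\beta_k^{-1}\beta_k(1-a_k)=1$; and it is $\varphi$-preserving since $\varphi\circ\Psi_k=\varphi_{c_k}+\beta_k^{-1}\varphi(1-a_k)\,\varphi_{1-c_k}=\varphi_{c_k}+\varphi_{1-c_k}=\varphi$. As $\varphi\circ\Psi_k\le\varphi$, its KMS $L^2$-implementation exists by Lemma~\ref{Lem=L2Interpolation} and is computed on the vectors $D_\varphi^{\frac14}xD_\varphi^{\frac14}$ to be
\[
\Psi_k^{(2)}=\Phi_k^{(2)}+\beta_k^{-1}\,\bigl|\,D_\varphi^{\frac14}(1-a_k)D_\varphi^{\frac14}\,\bigr\rangle\bigl\langle\,D_\varphi^{\frac14}(1-c_k)D_\varphi^{\frac14}\,\bigr|,
\]
a rank-one perturbation of the compact operator $\Phi_k^{(2)}$; hence $\Psi_k^{(2)}$ is compact.

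The step I expect to be the real obstacle is strong convergence $\Psi_k^{(2)}\to 1$. Since $\Psi_k$ is Markov the operators $\Psi_k^{(2)}$ are contractions (Kadison--Schwarz), so it is enough to test on the dense set of vectors $\xi_x=D_\varphi^{\frac14}xD_\varphi^{\frac14}$ with $x$ in the Tomita algebra. On such a vector the correction is $\beta_k^{-1}\varphi_{1-c_k}(x)\,D_\varphi^{\frac14}(1-a_k)D_\varphi^{\frac14}$, and here the naive Cauchy--Schwarz bound fails: both the scalar and the output vector are only $O(\beta_k^{1/2})$, so after dividing by $\beta_k$ one gets $O(1)$. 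The resolution is to estimate the scalar by \emph{positivity of the functional} instead of $L^2$-duality, namely $|\varphi_{1-c_k}(x)|\le\|x\|\,\varphi_{1-c_k}(1)=\|x\|\,\beta_k$, while $0\le 1-a_k\le 1$ gives, via the elementary inequality $\|D_\varphi^{\frac14}yD_\varphi^{\frac14}\|_2^2\le\varphi(y)$ valid for $0\le y\le 1$, the bound $\|D_\varphi^{\frac14}(1-a_k)D_\varphi^{\frac14}\|_2\le\beta_k^{1/2}$. Hence the correction applied to $\xi_x$ has norm at most $\beta_k^{-1}\cdot\|x\|\beta_k\cdot\beta_k^{1/2}=\|x\|\,\beta_k^{1/2}$. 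Finally $\beta_k=1-\langle\Phi_k^{(2)}D_\varphi^{\frac12},D_\varphi^{\frac12}\rangle\to0$ because $\Phi_k^{(2)}\to 1$ strongly, so the correction vanishes on each $\xi_x$; combined with $\Phi_k^{(2)}\xi_x\to\xi_x$ and the uniform contractivity this yields $\Psi_k^{(2)}\to 1$ strongly. Thus $(\Psi_k)_k$ is a sequence of Markov maps with compact KMS $L^2$-implementations converging strongly to the identity, witnessing the symmetric Haagerup property of $(\cM,\varphi)$.
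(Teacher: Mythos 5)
Your proof is correct and is essentially the paper's own argument: your perturbation $\Psi_k(x)=\Phi_k(x)+\beta_k^{-1}\varphi_{1-c_k}(x)(1-\Phi_k(1))$ is literally the same map as the paper's $\Phi_k(x)+a_k^{\frac12}\varphi_{b_k}(x)a_k^{\frac12}$ (since $\beta_k=\varphi(1-\Phi_k(1))=\varphi(1-c_k)$), built on the same Lemma~\ref{Lem=NCLP}, with the same verification of unitality, $\varphi$-invariance, compactness via a rank-one correction, and strong convergence. The only cosmetic difference is in the final estimate, where you bound the correction by $\|x\|\beta_k^{1/2}$ while the paper identifies it with $\|x\|\,\Vert(1-T_k)D_\varphi^{\frac12}\Vert_2$; both vanish for the same reason, namely $T_kD_\varphi^{\frac12}\to D_\varphi^{\frac12}$.
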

\begin{proof}
Since $\varphi\circ \Phi_k \leq \varphi$ Lemma \ref{Lem=NCLP} yields that there exits a $0 \leq c_k \leq 1$ such that,
\[
\varphi \circ \Phi_k  = \varphi_{c_k}.
\]
If $\Phi_k(1) = 1$ then we set $\Psi_k = \Phi_k$. Note that in this case $\varphi(c_k) = \varphi(1)$ so that $\varphi(1 - c_k) = 0$ and since $1-c_k$ is positive this implies that $1-c_k = 0$. So $c_k = 1$ and $\varphi \circ \Phi_k = \varphi$. Else, define,
\[
a_k = \frac{1}{ \varphi(1-c_k)} (1 - \Phi_k(1)), \qquad b_k = 1-c_k.
\]
Put,
\[
\Psi_k(x) = \Phi_k(x) + a_k^{\frac{1}{2}} \varphi_{b_k}(x) a_k^{\frac{1}{2}}.
\]
Then,
\[
\begin{split}
\varphi \circ \Psi_k(x) = & \varphi \circ \Phi_k(x) + \varphi(a_k^{\frac{1}{2}} \varphi_{b_k}(x) a_k^{\frac{1}{2}} ) \\
= & \varphi_{c_k}(x) + \varphi_{b_k}(x) \frac{1}{\varphi(1-c_k)} \varphi(1 - \Phi_k(1)) \\
= & \varphi_{c_k}(x) + \varphi_{b_k}(x) = \varphi_{b_k + c_k}(x) = \varphi_1(x) = \varphi(x).
\end{split}
\]
Here the third last equality can be derived from \cite[Corollary 12]{TerpII}. Clearly, the KMS $L^2$-implementation of $\Psi_k$ is compact. Finally, let $S_k$ be the KMS $L^2$-implementation of $\Psi_k$ and $T_k$ of $\Phi_k$. Then, for $x \in \cM^+$ we see that,
\[
\Vert (S_k - 1) D_\varphi^{\frac{1}{4}}xD_\varphi^{\frac{1}{4}} \Vert_2 \leq \Vert (S_k - T_k) D_\varphi^{\frac{1}{4}}xD_\varphi^{\frac{1}{4}} \Vert_2 + \Vert (T_k - 1) D_\varphi^{\frac{1}{4}}xD_\varphi^{\frac{1}{4}} \Vert_2,
\]
and the second term goes to 0. So it remains to estimate,
\[
\begin{split}
& \Vert (S_k - T_k) D_\varphi^{\frac{1}{4}} x D_\varphi^{\frac{1}{4}} \Vert_2 = \Vert \varphi_{b_k}(x) D_\varphi^{\frac{1}{4}} a_k D_\varphi^{\frac{1}{4}} \Vert_2 = \varphi_{b_k} (x) \Vert D_\varphi^{\frac{1}{4}} a_k D_\varphi^{\frac{1}{4}} \Vert_2 \leq \Vert x \Vert \varphi(b_k) \Vert D_\varphi^{\frac{1}{4}} a_k D_{\varphi}^{\frac{1}{4}} \Vert_2 \\
= & \Vert x \Vert \Vert D_\varphi^{\frac{1}{4}} (1- \Phi_k(1)) D_\varphi^{\frac{1}{4}} \Vert_2 \\
= & \Vert x \Vert \Vert (1 - T_k) D_\varphi^{\frac{1}{2}}  \Vert_2 \rightarrow 0.
\end{split}
\]
This concludes the proof, since every element in $\cM$ can be written as the sum of four positive elements.
\end{proof}

The next theorem was one of the aims of this section. Note that for \emph{modular} Haagerup property (see the discussion after Theorem \ref{Thm=Equiv}) a corresponding fact was shown in \cite{OkaTom}.

\begin{tw}\label{Thm=MainTheorem}
If $\varphi$ is a normal state on $\cM$ and $(\cM, \varphi)$ has the symmetric Haagerup property, then the completely positive maps in Definition \ref{Dfn=HAPSymmetric}  may be chosen unital, i.e. $\Phi_k(1) = 1$ and state-preserving $\varphi \circ \Phi_k = \varphi$.
\end{tw}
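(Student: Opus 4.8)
The plan is to combine the two principal results established immediately before this theorem, so that the proof reduces to a clean two-step composition. Since a normal faithful state $\varphi$ is in particular a normal, semifinite, faithful weight, Lemma \ref{Lem=Contractive} applies verbatim and furnishes contractive completely positive maps $\Phi_k$ witnessing the symmetric Haagerup property of $(\cM, \varphi)$. This is the only place where one must observe that the hypothesis on $\varphi$ in Lemma \ref{Lem=Contractive} covers the present state case as a special instance.

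Given these contractive approximating maps, I would then invoke Proposition \ref{Prop=UnitalPreserving}, whose hypotheses are now met exactly: $\varphi$ is a normal faithful state and the symmetric Haagerup property of $(\cM, \varphi)$ is witnessed by contractive maps. Its conclusion is the existence of Markov maps $\Psi_k$ witnessing the symmetric Haagerup property. Unwinding the definition of \emph{Markov}, these maps are unital, i.e. $\Psi_k(1) = 1$, and state-preserving, i.e. $\varphi \circ \Psi_k = \varphi$, which is precisely the assertion of the theorem. One simply relabels the $\Psi_k$ as the maps $\Phi_k$ of Definition \ref{Dfn=HAPSymmetric}.

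I expect no genuine obstacle to remain at this stage; all the difficulty has been absorbed into the preceding results. The hard work lies in establishing contractivity of the approximating maps, carried out first in the tracial-perturbation setting of Lemma \ref{Lem=MainResult} and then propagated to arbitrary weights through the crossed-product duality argument of Lemma \ref{Lem=Contractive}, and subsequently in the passage from merely contractive to exactly Markov maps in Proposition \ref{Prop=UnitalPreserving}, which relies on the non-commutative Radon--Nikodym analysis of Lemma \ref{Lem=NCLP} to correct the defect $1 - \Phi_k(1)$ by a rank-one-type perturbation $a_k^{1/2}\varphi_{b_k}(\cdot)a_k^{1/2}$ while preserving compactness and strong convergence of the KMS $L^2$-implementations. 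Once those ingredients are in place, the present theorem is immediate.
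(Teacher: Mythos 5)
Your proposal is correct and coincides with the paper's own proof: the theorem is obtained exactly by applying Lemma \ref{Lem=Contractive} (a state being a particular normal, semifinite, faithful weight) to get contractive approximating maps, and then Proposition \ref{Prop=UnitalPreserving} to upgrade them to Markov maps. Your additional remarks about where the real difficulty lies also match the structure of the paper's argument.
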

\begin{proof}
The  result follows  by combining Lemma \ref{Lem=Contractive} with Proposition \ref{Prop=UnitalPreserving}.
\end{proof}

\section{Haagerup property for arbitrary von Neumann algebras revisited} \label{Sect=Equiv}

In this section we summarise the equivalence between various definitions of the Haagerup property for von Neumann algebras, collecting and building on the results of
\cite{CasSka}, \cite{OkaTom}, \cite{COST} and \cite{OkaTom2}, and focusing on the situation where we work with a faithful normal state. We make also some comments on the modular Haagerup property and the possible extensions to the weight case and finish by formulating the free product result.

\begin{tw} \label{Thm=Equiv}
Let $\mlg$ be a von Neumann algebra with a faithful normal state $\varphi$. Then the following conditions are equivalent (and in fact do not depend on the choice of the faithful normal state):
\begin{rlist}
\item there exists a sequence $\Phi_k: \cM \rightarrow \cM$ of normal, completely positive maps such that $\varphi \circ \Phi_k \leq \varphi$ and their GNS $L^2$-implementations are compact operators  converging to the identity of $L^2(\cM, \varphi)$ strongly (the  Haagerup property of \cite{CasSka});
\item there exists a sequence $\Phi_k: \cM \rightarrow \cM$ of normal, completely positive maps such that $\varphi \circ \Phi_k \leq \varphi$ and their KMS $L^2$-implementations are compact operators converging to the identity of $L^2(\cM, \varphi)$ strongly (the symmetric Haagerup property of Section \ref{Section=SymmetricHAP});
\item there exists a sequence $\Phi_k: \cM \rightarrow \cM$ of Markov maps such that  their GNS $L^2$-implementations are compact operators  converging to the identity of $L^2(\cM, \varphi)$ strongly (the   Haagerup property of \cite{DawFimSkaWhi});
\item there exists a sequence $\Phi_k: \cM \rightarrow \cM$ of KMS-symmetric Markov maps such that  their KMS $L^2$-implementations are compact operators converging to the identity of $L^2(\cM, \varphi)$ strongly (the  unital, selfadjoint symmetric Haagerup property of Section \ref{Section=SymmetricHAP});
\item there exists a sequence $\Phi_k: \cM \rightarrow \cM$ of Markov maps such that  their KMS $L^2$-implementations are compact operators converging to the identity of $L^2(\cM, \varphi)$ strongly (the  unital symmetric Haagerup property of Section \ref{Section=SymmetricHAP});
\item $\mlg$ has the standard form Haagerup property of \cite{OkaTom}.
\end{rlist}

\end{tw}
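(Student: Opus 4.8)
The plan is to prove a web of implications rather than a single cycle, handling the passages that stay within the KMS (symmetric) picture by the results of the previous section, and citing \cite{COST}, \cite{OkaTom} and \cite{OkaTom2} for the comparison with the GNS (right) embedding and for the standard form reformulation. First I would dispose of the trivial implications: a KMS-symmetric Markov map is in particular Markov, and a Markov map $\Phi$ is normal, completely positive and satisfies $\varphi\circ\Phi=\varphi\leq\varphi$, with the same operator serving as its GNS implementation. Hence (iv)$\Rightarrow$(v)$\Rightarrow$(ii) and (iii)$\Rightarrow$(i) are immediate, and (v)$\Rightarrow$(ii) as well.

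Next I would establish the equivalences on the KMS side, namely (ii)$\Leftrightarrow$(iv)$\Leftrightarrow$(v)$\Leftrightarrow$(vi). The implication (ii)$\Rightarrow$(v) is exactly Theorem \ref{Thm=MainTheorem} (which combines Lemma \ref{Lem=Contractive} with Proposition \ref{Prop=UnitalPreserving}), compactness and strong convergence of the KMS implementations being manifestly preserved by the constructions there. For (v)$\Rightarrow$(iv) I would symmetrise: given Markov maps $\Phi_k$ with KMS implementations $T_k$, replace each $\Phi_k$ by $\tfrac12(\Phi_k+\Phi_k^{\dagger})$, where $\Phi_k^{\dagger}$ is the KMS-adjoint. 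This map is again Markov and now KMS-symmetric, and since $(\Phi^{(2)})^{\ast}=(\Phi^{\dagger})^{(2)}$ its KMS implementation is $\tfrac12(T_k+T_k^{\ast})$, which is compact; moreover, as each $T_k$ is a contraction with $T_k\to 1$ strongly, a standard estimate gives $T_k^{\ast}\to 1$ strongly, whence $\tfrac12(T_k+T_k^{\ast})\to 1$ strongly. Together with the trivial implications this yields (ii)$\Leftrightarrow$(iv)$\Leftrightarrow$(v). Finally, for (iv)$\Leftrightarrow$(vi) I would observe that the KMS implementation of a KMS-symmetric Markov map is precisely a KMS-symmetric $L^2$-Markov operator in the sense following Lemma \ref{KMS2M} (self-adjoint, cone-preserving, and fixing $\hsq$, the last two because $\Phi$ is completely positive and unital), and such operators are exactly those witnessing the standard form Haagerup property of \cite{OkaTom}; conversely Lemma \ref{KMS2M} reconstructs a KMS-symmetric Markov map from such an operator, with compactness and strong convergence matching on both sides.

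It then remains to attach the GNS conditions (i) and (iii) to this web. The equivalence (i)$\Leftrightarrow$(vi) of the \cite{CasSka} definition with the standard form definition of \cite{OkaTom} is the content of \cite{COST}, so it suffices to obtain one further implication, say (i)$\Rightarrow$(iii); for this I would invoke \cite{OkaTom2}, where the comparison between the right (GNS) and symmetric (KMS) embeddings is carried out for contractive maps. This comparison is now legitimately applicable precisely because the maps produced in the KMS picture above are contractive, indeed Markov, which is exactly the hypothesis required in \cite{OkaTom2}. I expect this last passage to be the genuine obstacle: for a fixed map the GNS and KMS implementations are intertwined by the unbounded operator $\Delta^{1/4}$, so neither compactness nor strong convergence transfers formally between the two pictures, and it is exactly the contractivity secured in Lemma \ref{Lem=Contractive} and Theorem \ref{Thm=MainTheorem} that renders the transfer rigorous. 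Assembling all the implications closes the web and proves the six conditions equivalent; their independence of the chosen faithful normal state then follows from the weight-independence results established earlier in this section.
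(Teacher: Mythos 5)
Your handling of the KMS-side conditions matches the paper: the trivial implications, (ii)$\Rightarrow$(v) via Theorem \ref{Thm=MainTheorem}, and (v)$\Rightarrow$(iv) by averaging with the KMS-adjoint are exactly the paper's steps (your observation that for contractions $T_k\to 1$ strongly forces $T_k^*\to 1$ strongly is correct, and in fact a little more self-contained than the paper's appeal to weak convergence and convex combinations). The problem lies entirely in how you attach (i), (iii) and (vi) to this block.

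Your only bridge from $\{$(i),(iii),(vi)$\}$ into $\{$(ii),(iv),(v)$\}$ is the claimed equivalence (iv)$\Leftrightarrow$(vi), and its nontrivial direction (vi)$\Rightarrow$(iv) does not hold for the reason you give. The standard form Haagerup property of \cite{OkaTom} asks only for compact completely positive operators on $\ltwoH$ converging strongly to the identity; its witnesses need \emph{not} be self-adjoint, need not fix $\hsq$, and need not map the order interval $\{\xi : 0\leq\xi\leq\hsq\}$ into itself. So they are not ``exactly'' the KMS-symmetric $L^2$-Markov operators, and Lemma \ref{KMS2M} simply does not apply to them: upgrading the bare standard-form witnesses to operators satisfying its hypotheses is a substantive theorem (Theorem 4.8 of \cite{OkaTom}), not a definitional identity, and -- as Section \ref{Section=SymmetricHAP} of the paper stresses -- that route gives no control over the weight-compatibility $\varphi\circ\Phi_k\leq\varphi$ which the present paper needs. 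Your step (i)$\Rightarrow$(iii) is moreover circular as justified: you propose to apply \cite{OkaTom2} to ``the maps produced in the KMS picture above,'' but producing contractive (Markov) maps in the KMS picture starting from (i) requires precisely the bridge that is missing. The paper closes the web differently: it proves (i)$\Leftrightarrow$(ii) directly, by showing that both the symmetric and the GNS versions of the property pass to and from the core $\cM\rtimes_{\sigma^\varphi}\br$ (Proposition \ref{Prop=CrossedProduct} and its counterpart from \cite{CasSka}) and coincide there, since the core is semifinite and both properties are independent of the choice of weight; \cite{OkaTom2} is cited only for (v)$\Leftrightarrow$(iii), i.e.\ for the transfer between the two embeddings for maps that are \emph{already} Markov. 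Without an argument of this kind (or an explicit appeal to Theorem 4.8 of \cite{OkaTom} in place of your use of Lemma \ref{KMS2M}), your implications do not connect the two sides, and the six conditions are not proved equivalent.
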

\begin{proof}
Implications (iii)$\Longrightarrow$(i) and (iv)$\Longrightarrow$(v)$\Longrightarrow$(ii) are trivial. The equivalence (i)$\Longleftrightarrow$(vi) was noticed in \cite{OkaTom} (see also \cite{COST}). The implication (ii)$\Longrightarrow$(v) is Theorem \ref{Thm=MainTheorem}.  The equivalence (v)$\Longleftrightarrow$(iii) was was shown  in \cite{OkaTom2} (the latter paper proves also that the equivalence remains true if we consider completely contractive, $\varphi$-reducing maps). The equivalence (i)$\Longleftrightarrow$(ii) follows from the arguments of Section \ref{Section=SymmetricHAP} -- specifically from the fact that $\mlg$ has the symmetric Haagerup property if and only if its core algebra has the symmetric Haagerup property, that the same fact is true for the Haagerup property of \cite{CasSka} and that the two versions of the Haagerup property are clearly equivalent for semifinite von Neumann algebras (as neither depends on the choice of the state).

Thus it remains to comment on the implication (v)$\Longrightarrow$(iv).  Let $(\Phi_n)_{n=1}^{\infty}$ be a sequence of approximating Markov maps on $\mlg$, as defined in (iv). Consider the sequence $(\Phi_n^{\dagger})_{n=1}^{\infty}$: it consists of Markov maps, their KMS implementations are compact (as $ (\Phi_n^{\dagger})^{(2)} = (\Phi_n^{(2)})^*$), and obviously $(\Phi_n^{\dagger})^{(2)}$ converge in the wo-topology to $I_{\ltwo}$ (as $(\Phi_n)^{(2)}$ did). Thus  $(\Phi_n^{\dagger})_{n=1}^{\infty}$ is a sequence of approximating Markov maps on $\mlg$, and so is $(\frac{\Phi_n+ \Phi_n^{\dagger}}{2})_{n=1}^{\infty}$. The latter are obviously KMS-symmetric.
\end{proof}

Naturally from now on we will simply say that a von Neumann algebra $\mlg$ with a separable predual \emph{has the Haagerup approximation property} if any/all conditions listed in the above theorem hold. In \cite{OkaTom} the authors consider also the case where one can find the approximating maps (in the sense of condition (vi) of Theorem \ref{Thm=Equiv}) commuting with the action of the modular group, and show that in that case they can be in addition assumed to be Markov (see Theorem 4.11 of that paper). If such maps exist, we will say that $\mlg$ has \emph{the modular Haagerup property}. 
In general the Haagerup property of Theorem \ref{Thm=Equiv} is not equivalent to the modular Haagerup property, see Corollary \ref{modularperiodic}.
Note however the following result, observed also by R.\,Tomatsu.

\begin{propn}\label{qgroupmodular}
Let $\QG$ be a discrete quantum group with the Haagerup property. Then the von Neumann algebra $L^{\infty}(\hQG)$ has the modular Haagerup property.
\end{propn}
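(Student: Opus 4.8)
The plan is to read off the approximating maps from the quantum-group structure and then to verify that they commute with the modular automorphism group of the Haar state; once this is done the modular Haagerup property is immediate from its definition. Write $\varphi$ for the Haar state of the compact quantum group $\hQG$, so that $L^\infty(\hQG)$ is presented in its GNS representation with respect to the faithful normal state $\varphi$. By the analysis of the Haagerup property for discrete quantum groups in \cite{DawFimSkaWhi}, the hypothesis that $\QG$ has the Haagerup property supplies a convolution semigroup of states $(\mu_t)_{t \geq 0}$ on $\hQG$ whose associated semigroup of Schur multipliers $(\Phi_t)_{t>0}$ on $L^\infty(\hQG)$ consists of Markov maps (normal, unital, completely positive and $\varphi$-preserving), with $L^2$-implementations $\Phi_t^{(2)}$ that are compact and converge strongly to the identity of $L^2(L^\infty(\hQG),\varphi)$ as $t \to 0^+$.

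First I would make precise in what sense the $\Phi_t$ are Schur multipliers. Recall the Peter--Weyl picture: the matrix coefficients $u^\alpha_{ij}$ of the finite-dimensional irreducible unitary corepresentations, indexed by $\alpha \in \mathrm{Irr}(\hQG)$, span a $\sigma$-weakly dense $*$-subalgebra of $L^\infty(\hQG)$, and these multipliers act as a scalar on each block, $\Phi_t(u^\alpha_{ij}) = c_\alpha(t)\, u^\alpha_{ij}$, the scalars $c_\alpha(t)$ being determined by the values of $\mu_t$ on the characters. The crucial structural input is that the modular automorphism group $\sigma^\varphi$ of the Haar state preserves each spectral block $\mathcal{B}_\alpha = \linsp\{u^\alpha_{ij} : i,j\}$, acting there through the Woronowicz $F$-matrices; in particular $\sigma^\varphi_s(u^\alpha_{ij})$ is a linear combination of the $u^\alpha_{kl}$ with the \emph{same} $\alpha$.

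These two facts together give $\Phi_t \circ \sigma^\varphi_s = \sigma^\varphi_s \circ \Phi_t$ on $\linsp\{u^\alpha_{ij}\}$, and hence on all of $L^\infty(\hQG)$ by normality. Equivalently, letting $Q_\alpha$ denote the orthogonal projection of $L^2(L^\infty(\hQG),\varphi)$ onto the (finite-dimensional, mutually orthogonal by Peter--Weyl) image of $\mathcal{B}_\alpha$, one has $\Phi_t^{(2)} = \sum_\alpha c_\alpha(t) Q_\alpha$, which commutes with the $L^2$-implementation of $\sigma^\varphi$ because the latter leaves each $Q_\alpha$ invariant. From the commutation it follows (as recorded in Section \ref{Sect=HaagerupLp}) that the GNS and KMS $L^2$-implementations of $\Phi_t$ coincide, so $(\Phi_t)_{t>0}$ simultaneously witnesses the equivalent conditions of Theorem \ref{Thm=Equiv} through Markov maps commuting with the modular group. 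By the definition of the modular Haagerup property this is exactly what is required; the maps are moreover already Markov, so Theorem 4.11 of \cite{OkaTom} (which in general upgrades modular-commuting approximating maps to Markov ones) is automatically satisfied.

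The main obstacle is the passage from the Haagerup property of $\QG$ to approximating maps that are genuinely \emph{central} Schur multipliers --- that is, that act as a scalar on every block $\mathcal{B}_\alpha$ --- since it is exactly centrality that renders the commutation with $\sigma^\varphi$ transparent and is the feature of the convolution-semigroup-of-states construction of \cite{DawFimSkaWhi} that has to be invoked. Once centrality is secured, the commutation with the modular group, the compactness of $\Phi_t^{(2)}$ (block-diagonal with finite-dimensional blocks and $c_\alpha(t) \to 0$ at infinity), and the strong convergence to the identity are all routine.
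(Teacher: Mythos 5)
Your overall strategy --- produce approximating Markov maps commuting with $\sigma^\varphi$ and then invoke the definition of the modular Haagerup property --- is the right one, and your observation that $\sigma^\varphi$ preserves each spectral block $\mathcal{B}_\alpha$ is correct. But the step you yourself flag as ``the main obstacle'' is a genuine gap, not a feature you can read off from \cite{DawFimSkaWhi}. A state $\mu$ on $C^u(\hQG)$ induces a multiplier which acts on the block $\mathcal{B}_\alpha$ by a \emph{matrix}, essentially $(\mu(u^\alpha_{ik}))_{i,k}$ (up to the precise left/right convention); it acts by a scalar $c_\alpha$ precisely when $\mu$ is a \emph{central} state. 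Neither Theorem 7.4 nor Theorem 7.18 of \cite{DawFimSkaWhi} produces central states: the convolution semigroup of states constructed there witnesses the Haagerup property, but its multipliers are in general not central, and the assertion that the Haagerup property of an arbitrary discrete quantum group can be witnessed by central multipliers is a strictly stronger requirement which neither reference establishes. Note that it is exactly in the non-Kac case --- the only case where the modular group is nontrivial and the proposition has content --- that the naive averaging trick used to centralize multipliers breaks down. So as written, your proof assumes a property that does not follow from the hypothesis or from the cited results.

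The paper's proof requires much less than centrality: invariance of the states $\mu_i$ under the universal scaling group $(\hat{\tau}^u_t)_{t\in\br}$. This \emph{can} be arranged, by averaging with respect to an invariant mean on $L^\infty(\br)$, with Proposition 7.17 of \cite{DawFimSkaWhi} guaranteeing that the averaged states still yield an approximate unit in $c_0(\QG)$. Commutation of the maps $L_i(x)=(\id\otimes\mu_i)(\wW(x\otimes 1)\wW^*)$ with $\hat{\sigma}_t$ then follows from the identity $(\hat{\sigma}_t\otimes\hat{\tau}^u_t)(\wW)=\wW$, with no block-diagonality of the $L_i$ needed (on each block, scaling invariance forces the multiplier matrix to commute with the relevant modular $F$-matrix, which is all that commutation with $\sigma^\varphi$ actually requires). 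Alternatively, you could repair your argument without centrality along the lines of Tomatsu's remark recorded after the proposition: $\nabla_{\varphi}$ is diagonalizable with eigenvectors among the matrix coefficients, so by Lemme 3.7.3 of \cite{ConnesThesis} the modular flow factors through a compact group; averaging the approximating maps of \cite{DawFimSkaWhi} over that compact group produces modular-commuting approximating maps, and compactness of the $L^2$-implementations survives the averaging because $k\mapsto u_k T u_k^*$ is norm continuous when $T$ is compact. Either route closes the gap; asserting centrality does not.
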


\begin{proof}
 This is effectively a corollary of (the proofs of) Theorem 7.4 and Proposition 7.17 of \cite{DawFimSkaWhi}. We sketch the argument: as $\QG$ has the Haagerup property,
there exists a net of states $(\mu_i)_{i\in I} \in S(C^u(\hQG))$ such that the net $\Four(\mu_i) \in c_b(\QG)$ (see the notation in Section 6 of \cite{DawFimSkaWhi}) is in fact an approximate unit in $c_0(\QG)$. The proof of Proposition 7.17 in \cite{DawFimSkaWhi} shows that one can in addition assume that the states $\mu_i$ are invariant under the action of the universal scaling group $(\widehat{\tau}^u_t)_{t\in \br}$ (see \cite{kus}) -- effectively one uses the fact that $L^{\infty}(\br)$ admits an invariant mean and averages with respect to the action, the only non-trivial piece of the argument is showing that we still get an approximate unit in $c_0(\QG)$ -- this is however proved in the proposition mentioned above.
    Now Theorem 7.4 of \cite{DawFimSkaWhi} shows that each of the unital, normal, Haar state preserving completely positive maps $(L_i)_{i \in I}$ acting on $L^{\infty}(\hQG)$ defined by
\[    L_i(x)=(\id\otimes\mu_i)(\wW(x\otimes 1)\wW^*)\quad (x\in L^\infty(\hQG)) \]
has a compact implementation $T_i$ on $L^2(\hQG)$, and moreover $T_i$ tends strongly to $I_{L^2(\hQG)}$. It thus remains to verify that $\hat{\sigma}_t \circ L_i = L_i \circ \hat{\sigma}_t$ for $i \in I$, $t \in \br$. Note that abusing the notation we can view $\hat{\sigma}_t$ simply as an automorphism of $B(L^2(\hQG))$, so it makes sense to consider $(\hat{\sigma_t} \ot \hat{\tau}^u_t) (\wW)$. But
\begin{equation} (\hat{\sigma_t} \ot \hat{\tau}^u_t) (\wW) = \wW, \label{WW}\end{equation}
as follows from the formulas in Section 9 of \cite{kus} -- in particular note that Proposition 9.1 in the language of $\wW$ means that
\[ (\id \ot \hat{\tau}^u_t) (\wW) = (P^{-it} \ot \id)(\wW) (P^{it} \ot \id)\]
and the latter is equal to $(\tau_{-t} \ot \id)(\wW) = (\hat{\sigma}_{-t} \ot \id)(\wW)$ by Section 5.3 in \cite{kus2}.

The equation \eqref{WW} however means that
\begin{align*} \hat{\sigma}_t(L_i(x)) &=  (\id\otimes\mu_i) (\hat{\sigma}_t \ot \hat{\tau}^u_t) (\wW(x\otimes 1)\wW^*) \\&= (\id\otimes\mu_i \circ \hat{\tau}^u_t)(\wW(\hat{\sigma}_t(x)\otimes 1)\wW^*) =(\id\otimes\mu_i)(\wW(\hat{\sigma}_t(x)\otimes 1)\wW^*) \\&=
 L_i(\hat{\sigma}_t (x)), \end{align*}
which ends the proof.

\end{proof}

As mentioned before, the above fact was also observed by R.\,Tomatsu, who mentioned to us the following  proof of the above result (see also Corollary \ref{modularperiodic}):  the modular operator $\nabla_{\hat{\phi}}$ acting on $L^2(\hQG)$ is diagonalizable, in fact  the eigenvectors can be chosen among the entries of irreducible representations of $\hat{\QG}$. Thus Lemme 3.7.3 of \cite{ConnesThesis} implies that the action of the modular group $\{\hat{\sigma}_t:t\geq 0\}$ factorises through a compact group. This means that one can use the approximating maps constructed in Theorem 7.4 of \cite{DawFimSkaWhi} and average them with respect to the action of the modular group to obtain the modular Haagerup property.

We do not know if Theorem \ref{Thm=Equiv} (specifically the implication (i)$\Longrightarrow$(iii) can be extended to the case of faithful normal semifinite weights. In fact even the very special case of existence of Markov maps yielding suitable approximations for $(B(\ell^2), \Tr)$ remains open.

Finally we observe that the results of the last section allow us to improve on \cite[Theorem 4.12]{OkaTom}, establishing that the free product of von Neumann algebras with the modular Haagerup property has the (modular) Haagerup property by removing the modularity assumption.

\begin{cor}
Let $(\cM, \varphi)$ and $(\cN, \psi)$ be von Neumann algebras with normal faithful states. Suppose that $\cM$ and $\cN$ have the Haagerup property. Then also the von Neumann algebraic free product $\cM \star \cN$ with respect to the states $\varphi$ and $\psi$ has the Haagerup property.
\end{cor}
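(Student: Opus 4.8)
The plan is to reduce everything to the now-available Markov approximations and then to run the free product construction of Boca \cite{Boca} (in the form used for the modular Haagerup property in \cite{OkaTom}); the essential point is that Theorem \ref{Thm=MainTheorem}, summarised in Theorem \ref{Thm=Equiv}, removes the need for any modularity assumption on the factors. Write $\omega = \varphi \star \psi$ for the free product state on $\cM \star \cN$; it is faithful and normal, so by Theorem \ref{Thm=Equiv} it suffices to produce normal, unital, $\omega$-preserving completely positive maps on $\cM \star \cN$ whose GNS $L^2$-implementations are compact and converge strongly to the identity. Applying Theorem \ref{Thm=Equiv} to $(\cM,\varphi)$ and $(\cN,\psi)$, I first fix Markov maps $\Phi_k^{\cM}$ and $\Phi_k^{\cN}$ on the two factors, with GNS implementations $T_k^{\cM}, T_k^{\cN}$ that are compact and converge strongly to the respective identities.

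The naive free product $\Phi_k^{\cM} \star \Phi_k^{\cN}$ has an implementation that fixes the cyclic vector and hence cannot be compact on its own; worse, on the length-graded pieces of the free product Hilbert space its norm need not decay. To repair this I would pass to strictly contractive corners: for $0 < s < 1$ set
\[
\Phi_{k,s}^{\cM}(x) = s\,\Phi_k^{\cM}(x) + (1-s)\,\varphi(x)1, \qquad \Phi_{k,s}^{\cN}(y) = s\,\Phi_k^{\cN}(y) + (1-s)\,\psi(y)1.
\]
These are again normal Markov maps, and since the implementation of $x \mapsto \varphi(x)1$ is the rank-one projection $P_\Omega$ onto the cyclic vector, the implementation of $\Phi_{k,s}^{\cM}$ compresses, on the orthogonal complement $\mathring{\mathcal{H}}$ of $\Omega$, to $s$ times the compression $\mathring{T}_k^{\cM}$ of $T_k^{\cM}$ (here I use that a unital state-preserving implementation fixes $\Omega$ and therefore preserves $\mathring{\mathcal{H}}$). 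In particular its norm on $\mathring{\mathcal{H}}$ is at most $s$, and I would let $s = s_k \to 1$.

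Now form the free product Markov maps $\Phi_k = \Phi_{k,s_k}^{\cM} \star \Phi_{k,s_k}^{\cN}$ on $(\cM \star \cN, \omega)$, which are normal, unital, completely positive and $\omega$-preserving by Boca's theorem \cite{Boca}. Writing the free product GNS space as
\[
\mathcal{H}_\omega = \mathbb{C}\Omega \oplus \bigoplus_{n \geq 1}\ \bigoplus_{i_1 \neq \cdots \neq i_n} \mathring{\mathcal{H}}_{i_1} \otimes \cdots \otimes \mathring{\mathcal{H}}_{i_n},
\]
the implementation $\tilde{T}_k$ of $\Phi_k$ acts as $P_\Omega$ on $\mathbb{C}\Omega$ and as $(s_k\mathring{T}_{k,i_1}) \otimes \cdots \otimes (s_k\mathring{T}_{k,i_n})$ on each graded summand. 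Each such summand operator is a tensor product of compact operators, hence compact, and its norm is bounded by $s_k^n$; since there are only finitely many admissible index strings of each length $n$, these norms tend to $0$, so $\tilde{T}_k$ is a norm limit of finite partial direct sums of compact blocks and is therefore compact. For the strong convergence $\tilde{T}_k \to I$ I would test against elementary tensors in a fixed summand, where $\tilde{T}_k$ acts as $s_k^n(\mathring{T}_{k,i_1} \otimes \cdots \otimes \mathring{T}_{k,i_n})$; this converges to the identity since $s_k \to 1$ and each $T_k^{\cM}, T_k^{\cN} \to I$ strongly while preserving $\mathring{\mathcal{H}}$, and a $3\varepsilon$-argument using the uniform bound $\|\tilde{T}_k\| \leq 1$ together with density promotes this to strong convergence on all of $\mathcal{H}_\omega$. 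Invoking Theorem \ref{Thm=Equiv} once more shows that $\cM \star \cN$ has the Haagerup property.

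The only genuine obstacle is the compactness failure of the plain free product of the implementations, caused by the cyclic vector being fixed and by the absence of decay across the length grading; the strict-contraction modification together with the diagonal choice $s_k \to 1$ is exactly what resolves it while retaining enough control for strong convergence. Everything else — Boca's free product construction and the $3\varepsilon$ convergence estimate — is routine. It is worth stressing that the input making this argument work for arbitrary faithful normal states, rather than only under a modularity hypothesis as in \cite{OkaTom}, is precisely the existence of the Markov approximations furnished by Theorem \ref{Thm=MainTheorem}.
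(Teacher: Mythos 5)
Your proposal is correct and takes essentially the same route as the paper: the paper's proof simply notes that Theorem \ref{Thm=MainTheorem} supplies Markov (unital, state-preserving) approximating maps and then cites Boca's free product argument \cite{Boca} (cf.\ also \cite{OkaTom}), which is exactly the construction you have spelled out in detail --- mixing with the state to get strict contractivity off the cyclic vector, the blockwise tensor decomposition of the free product implementation giving compactness via the $s_k^n$ decay, and the $3\varepsilon$ argument for strong convergence. In short, you have reconstructed the content of the cited reference rather than found a different proof.
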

\begin{proof}
The proof is now the same as \cite[Proposition 3.9]{Boca} (or \cite[Theorem 4.12]{OkaTom})  since by Theorem \ref{Thm=MainTheorem} we may choose the completely positive maps witnessing the Haagerup property to be Markov.
\end{proof}

\section{Haagerup property via KMS symmetric Markov semigroups} \label{Sect=Semig}

The main result of this section is Theorem \ref{Thm:Semigroup}, describing the Haagerup property via Markov semigroups. We follow quite closely \cite{Sau} and \cite{JM}, at the same time clearly separating the purely Hilbert space-theoretic arguments from von Neumann algebraic considerations.

\begin{deft}
A \emph{Markov semigroup} $\{\Phi_t:t\geq 0\}$ on a von Neumann algebra $\mlg$ equipped with a faithful normal state $\varphi$ is a semi-group of Markov maps on $\mlg$ such that for all $x \in \mlg$ we have $\Phi_t(x) \stackrel{t\to 0^+} {\longrightarrow}\Phi_0(x) =x$ $\sigma$-weakly.
It is said to be \emph{KMS-symmetric} if each $\Phi_t$ is KMS symmetric, and \emph{immediately $L^2$-compact} if each of the maps $\Phi_t^{(2)}$ with $t>0$ is compact.
\end{deft}

The following extension of Lemma \ref{KMS2M} is straightforward.

\begin{lem} \label{KMS2Msemigroup}
If $\{T_t:t\geq 0\}$ is a $C_0$-semigroup consisting of symmetric $L^2$-Markov operators, then there exists a KMS-symmetric Markov semigroup $\{\Phi_t:t\geq 0\}$ on $\mlg$ such that $T_t=\Phi_t^{(2)}$ for each $t \geq 0$.
\end{lem}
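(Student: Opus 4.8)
The plan is to apply Lemma \ref{KMS2M} separately at each time $t$ and then show that the resulting family of von Neumann algebraic maps inherits both the semigroup law and the continuity from $\{T_t:t\geq 0\}$. First, for every fixed $t\geq 0$ the operator $T_t$ is by hypothesis a symmetric $L^2$-Markov operator, so Lemma \ref{KMS2M} produces a KMS-symmetric Markov map $\Phi_t$ on $\mlg$ with $T_t=\Phi_t^{(2)}$. This already secures the Markov property and KMS-symmetry of each individual $\Phi_t$; what remains is to verify $\Phi_0=\id$, the semigroup identity $\Phi_s\circ\Phi_t=\Phi_{s+t}$, and the required $\sigma$-weak continuity at $t=0^+$.

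The crux is that the assignment $\Phi\mapsto\Phi^{(2)}$ is both injective and multiplicative on Markov maps. Injectivity is immediate: if $\Phi^{(2)}=\Psi^{(2)}$ then $\hqu\Phi(x)\hqu=\hqu\Psi(x)\hqu$ for all $x$ in the relevant dense domain, and since $x\mapsto\hqu x\hqu$ is injective (as $\varphi$ is faithful, $\hsq$ has trivial kernel) this forces $\Phi=\Psi$. Multiplicativity follows from the defining formula of the KMS implementation: noting that a composition of Markov maps is again Markov, we have for $x$ in the Tomita algebra
\[
(\Phi\circ\Psi)^{(2)}(\hqu x\hqu)=\hqu\Phi(\Psi(x))\hqu=\Phi^{(2)}(\hqu\Psi(x)\hqu)=\Phi^{(2)}\Psi^{(2)}(\hqu x\hqu),
\]
whence $(\Phi\circ\Psi)^{(2)}=\Phi^{(2)}\Psi^{(2)}$ by density of $\hqu\mlg\hqu$ in $\ltwoH$.

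Granting these two facts, the semigroup structure transfers formally. Since $T_0=I=\id^{(2)}$, injectivity gives $\Phi_0=\id$. Since $\{T_t\}$ is a one-parameter semigroup,
\[
\Phi_{s+t}^{(2)}=T_{s+t}=T_sT_t=\Phi_s^{(2)}\Phi_t^{(2)}=(\Phi_s\circ\Phi_t)^{(2)},
\]
and injectivity yields $\Phi_{s+t}=\Phi_s\circ\Phi_t$. Hence $\{\Phi_t:t\geq 0\}$ is a semigroup of KMS-symmetric Markov maps. Finally, as $\{T_t\}$ is a $C_0$-semigroup we have $T_t\to I$ strongly, hence weakly, as $t\to 0^+$; because each $T_t$ is a contraction, the equivalence recorded in the discussion at the end of Section \ref{Sect=HaagerupLp} (between weak convergence of $L^2$-implementations and point-$\sigma$-weak convergence of contractive Markov maps) gives $\Phi_t(x)\to x$ $\sigma$-weakly for every $x\in\mlg$, which is exactly the continuity demanded in the definition of a Markov semigroup.

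I expect the only genuinely delicate point to be the injectivity and multiplicativity of $\Phi\mapsto\Phi^{(2)}$ together with the attendant domain bookkeeping, i.e.\ arranging that the identities above are first established on a common dense core (such as $\hqu\mathcal{T}_\varphi\hqu$) and then extended by density and contractivity. Everything else is a purely formal transfer of the semigroup axioms and of the strong continuity across the correspondence.
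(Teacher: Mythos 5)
Your proof is correct and takes essentially the same approach as the paper: the paper's own proof is just the one-line remark that the lemma follows from Lemma \ref{KMS2M} applied at each time $t$ together with the convergence considerations ending Section \ref{Sect=HaagerupLp}. Your write-up merely makes explicit the injectivity and multiplicativity of $\Phi\mapsto\Phi^{(2)}$ and the formal transfer of the semigroup axioms, which is exactly the argument the paper intends.
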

\begin{proof}
Follows from Lemma \ref{KMS2M} and the considerations ending Section \ref{Sect=HaagerupLp}.
\end{proof}

As mentioned in the beginning of this section, we intend to separate the Hilbert space arguments from the von Neumann algebra setup, as is clearly possible via Lemma \ref{KMS2Msemigroup}. Thus we first formulate the key statement purely in the Hilbert space language (see Proposition \ref{HilProp}). We will need some straightforward lemmas from the semigroup theory. The first one is Proposition 9.1.2 of \cite{Arlect} (or Proposition 2.1 of \cite{Ouh}).

\begin{lem} \label{Ar1}
Let $X$ be a Banach space, let $-A$ be the generator of a contractive $C_0$-semigroup $\{P_t: t\geq 0\}$ on $X$ and let $C \subset X$ be a closed convex set. Then the following conditions are equivalent:
\begin{rlist}
\item $P_t (C) \subset C$ for each $t\geq 0$;
\item $\lambda(\lambda I_X + A)^{-1} (C) \subset C$ for each $\lambda >0$.
\end{rlist}
\end{lem}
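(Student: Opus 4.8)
The statement to prove is Lemma \ref{Ar1}, a standard characterization of invariance of a closed convex set $C$ under a contractive $C_0$-semigroup in terms of invariance under the resolvents. Let me sketch a proof.

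\textbf{Proof strategy.} The plan is to establish the two implications separately, using the fundamental relationships between a contractive $C_0$-semigroup $\{P_t\}$, its generator $-A$, and the resolvent family $(\lambda I_X + A)^{-1}$. The key analytic tools are the resolvent formula expressing the resolvent as a Laplace transform of the semigroup, and the exponential (Euler/Yosida) formula recovering the semigroup from the resolvent. I expect the main obstacle to be handling the infinite averaging/limiting processes while keeping the closed convex set $C$ invariant — this is precisely where closedness and convexity of $C$ enter in an essential way.

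\textbf{From (i) to (ii).} First I would assume $P_t(C) \subset C$ for all $t \geq 0$. For $\lambda > 0$, since $-A$ generates a contractive $C_0$-semigroup, the resolvent admits the integral representation
\[
\lambda(\lambda I_X + A)^{-1} x = \lambda \int_0^\infty e^{-\lambda t} P_t x \, dt, \qquad x \in X.
\]
Fix $x \in C$. The idea is that $\lambda (\lambda I_X + A)^{-1} x$ is a limit of Riemann sums of the integrand; each such Riemann sum, weighted by the probability density $\lambda e^{-\lambda t}$ (note $\lambda \int_0^\infty e^{-\lambda t}\, dt = 1$), is a convex combination of the points $P_t x \in C$, hence lies in $C$ by convexity. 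Passing to the limit of the Riemann sums, which converges in norm since $t \mapsto P_t x$ is continuous and the integral converges absolutely, the limit stays in $C$ because $C$ is closed. This yields $\lambda(\lambda I_X + A)^{-1}(C) \subset C$.

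\textbf{From (ii) to (i).} Conversely, assume $\lambda(\lambda I_X + A)^{-1}(C) \subset C$ for all $\lambda > 0$. I would recover $P_t$ from the resolvent via the exponential formula, using the Yosida approximants $A_\lambda = \lambda A (\lambda I_X + A)^{-1} = \lambda I_X - \lambda^2 (\lambda I_X + A)^{-1}$; then
\[
P_t x = \lim_{\lambda \to \infty} e^{-t A_\lambda} x = \lim_{\lambda \to \infty} e^{t\lambda^2 (\lambda I_X + A)^{-1} - t\lambda I_X} x, \qquad x \in X,
\]
with convergence in norm. For fixed $x \in C$, the operator $e^{-t A_\lambda}$ can be written as $e^{-t\lambda} \exp\bigl(t\lambda^2 (\lambda I_X + A)^{-1}\bigr)$, and expanding the exponential in its power series shows
\[
e^{-tA_\lambda} x = e^{-t\lambda} \sum_{n=0}^\infty \frac{(t\lambda)^n}{n!} \bigl[\lambda(\lambda I_X + A)^{-1}\bigr]^n x.
\]
Each iterate $\bigl[\lambda(\lambda I_X + A)^{-1}\bigr]^n x$ lies in $C$ by hypothesis (iterating the invariance), and the scalar coefficients $e^{-t\lambda}\frac{(t\lambda)^n}{n!}$ are nonnegative and sum to $1$, so the right-hand side is again a convex combination of points of $C$, hence in $C$ by convexity and closedness (the series converges in norm). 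Letting $\lambda \to \infty$ and using that $C$ is closed gives $P_t x \in C$, establishing $P_t(C) \subset C$. This completes the equivalence.
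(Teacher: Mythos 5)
Your proposal is correct in substance, and it follows the standard route: the paper itself gives no proof of this lemma, citing instead Proposition 9.1.2 of \cite{Arlect} and Proposition 2.1 of \cite{Ouh}, and those references argue exactly as you do --- Laplace-transform representation of the resolvent for (i)$\Rightarrow$(ii), and an approximation of $P_t$ by functions of the resolvent for (ii)$\Rightarrow$(i). One technical point in your write-up needs repair, though it is easily fixed. In the direction (i)$\Rightarrow$(ii) you say each Riemann sum of $\lambda e^{-\lambda t}P_t x$ ``is a convex combination of the points $P_tx \in C$''; but any finite Riemann sum necessarily truncates the integral to some $[0,T]$, so its weights sum to roughly $1-e^{-\lambda T}<1$, and a sub-convex combination of points of $C$ need not lie in $C$ (unless $0\in C$). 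The fix is to normalise: the quotient $\bigl(\int_0^T \lambda e^{-\lambda t}P_t x\,dt\bigr)/\bigl(\int_0^T\lambda e^{-\lambda t}\,dt\bigr)$ is a genuine limit of convex combinations, hence in $C$, and it converges to $\lambda(\lambda I_X+A)^{-1}x$ as $T\to\infty$; alternatively one can invoke Hahn--Banach separation to see directly that the barycenter of a $C$-valued continuous function against a probability measure lies in the closed convex set $C$. The same normalisation remark applies to the Poisson-weighted series $e^{-t\lambda}\sum_n \frac{(t\lambda)^n}{n!}\bigl[\lambda(\lambda I_X+A)^{-1}\bigr]^n x$ in your converse direction: its partial sums have total weight $<1$, so one again passes to normalised partial sums before taking the limit. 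Finally, a slightly cleaner variant of (ii)$\Rightarrow$(i), and the one used in \cite{Ouh}, is the Euler formula $P_t x=\lim_{n\to\infty}\bigl[\tfrac{n}{t}(\tfrac{n}{t}I_X+A)^{-1}\bigr]^n x$: each iterate lies in $C$ by hypothesis applied $n$ times with $\lambda=n/t$, so closedness of $C$ finishes the argument with no series manipulation at all.
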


The second result shows that in some cases the conditions above are very easy to check.

\begin{lem} \label{bddgen}
Let $\Hil$ be a  Hilbert space, $C\subset \Hil$ a closed convex set, $n \in \bn$ and let $T_1, \ldots, T_n \in B(\Hil)$ be selfadjoint contractions such that $T_i(C)\subset C$. Put $A= n I_{\Hil} - \sum_{i=1}^n T_i $, and let $A_t:=\exp(-tA)$, $t\geq 0$. Then we have $A_t (C) \subset C$. Moreover for each $\lambda >0$ we have $\lambda(\lambda I_X + A)^{-1} (C) \subset C$.
\end{lem}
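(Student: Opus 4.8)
The plan is to reduce both assertions to a single elementary fact: a closed convex set is stable under \emph{infinite convex combinations}, understood as norm-convergent limits of finite ones. First I would set $B=\frac{1}{n}\sum_{i=1}^n T_i$, so that $A=n(I_{\Hil}-B)$. Since each $T_i$ is a selfadjoint contraction, $B$ is again a selfadjoint contraction, and for $\xi\in C$ the vector $B\xi=\frac1n\sum_{i=1}^n T_i\xi$ is an average of the points $T_i\xi\in C$, hence lies in $C$ by convexity; iterating, $B^k(C)\subseteq C$ for every $k\geq 0$ (with $B^0=I_{\Hil}$). Moreover $-I_{\Hil}\leq B\leq I_{\Hil}$ gives $0\leq A\leq 2n$, so $A$ is a bounded positive operator and $\{A_t\}$ is a uniformly continuous contraction semigroup with bounded generator $-A$.

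For the first assertion I would expand, using $A=n(I_{\Hil}-B)$,
\[
A_t=e^{-tA}=e^{-tn}\,e^{tnB}=\sum_{k=0}^\infty p_k(t)\,B^k,\qquad p_k(t)=e^{-tn}\frac{(tn)^k}{k!},
\]
the series converging in operator norm because $\|B\|\leq 1$. The weights $p_k(t)$ are nonnegative and sum to $e^{-tn}e^{tn}=1$. Fix $\xi\in C$; then $A_t\xi=\sum_{k\geq 0}p_k(t)B^k\xi$ with each $B^k\xi\in C$. Writing $q_N=\sum_{k=0}^N p_k(t)$, the renormalised partial sum $q_N^{-1}\sum_{k=0}^N p_k(t)B^k\xi$ is a genuine finite convex combination of elements of $C$, hence lies in $C$; as $N\to\infty$ we have $q_N\to 1$ and the series converges in norm to $A_t\xi$, so $q_N^{-1}\sum_{k=0}^N p_k(t)B^k\xi\to A_t\xi$. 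Since $C$ is closed this yields $A_t\xi\in C$, proving $A_t(C)\subseteq C$.

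For the resolvent statement there are two equally short routes. The cleanest is to invoke Lemma \ref{Ar1}: as $-A$ generates the contraction semigroup $\{A_t\}$ and we have just shown $A_t(C)\subseteq C$ for all $t\geq 0$, the equivalence there immediately gives $\lambda(\lambda I_{\Hil}+A)^{-1}(C)\subseteq C$ for every $\lambda>0$. Alternatively one argues directly: with $\mu=\frac{n}{\lambda+n}\in(0,1)$ one has $\lambda I_{\Hil}+A=(\lambda+n)I_{\Hil}-nB$, whence $\lambda(\lambda I_{\Hil}+A)^{-1}=(1-\mu)\sum_{k=0}^\infty\mu^kB^k$ (a Neumann series convergent since $\mu\|B\|<1$), and the geometric weights $(1-\mu)\mu^k$ are nonnegative with sum $1$; the very same renormalised-partial-sum argument applies.

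The only genuinely delicate point — and the one I would take care to spell out — is that $C$ is merely a closed convex set, not necessarily containing $0$, so the infinite combinations above cannot be handled termwise; the renormalisation by $q_N$ (resp. by the geometric tail) is exactly what turns each partial sum into a bona fide convex combination lying in $C$, after which closedness does the rest. Everything else is routine, the essential input being simply that $A$ is a \emph{bounded} positive operator built out of $C$-preserving pieces.
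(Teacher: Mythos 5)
Your proof is correct, but it follows a genuinely different route from the paper. The paper also reduces to the single operator $T=\frac{1}{n}\sum_{i=1}^n T_i$ and also disposes of the resolvent statement via Lemma \ref{Ar1}, but for the semigroup invariance it invokes the quadratic-form criterion of Ouhabaz--Arendt (Theorem 2.2 of \cite{Ouh}): writing $P$ for the metric projection onto $C$ and $Q(\xi,\eta)=\la \xi, A\eta\ra$, it suffices to check $\textup{Re}\,Q(P\xi,\xi-P\xi)\geq 0$, which follows in one line from self-adjointness of $T$ and the variational characterisation of $P$, since $TP\xi\in C$. Your argument instead expands $A_t=e^{-tn}e^{tnB}=\sum_k p_k(t)B^k$ as a Poisson mixture of powers of $B$ (and the resolvent as a geometric mixture), and uses only convexity, the renormalised partial sums, and closedness of $C$; this is more elementary and self-contained --- it needs neither the projection characterisation nor the form criterion, and it even yields the resolvent part directly without Lemma \ref{Ar1}. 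What it gives up is generality and thematic fit: the series expansions exist only because the generator $A$ is bounded, whereas the paper's form-theoretic argument is the template that survives for unbounded generators and ties in with the Dirichlet-form machinery developed in Section \ref{Sect=Dirichlet} (indeed the paper explicitly remarks that this projection language is the one introduced in \cite{Cip} for quantum Dirichlet forms). Your observation about the renormalisation being necessary because $C$ need not contain $0$ is exactly the right point to flag; with it, the limiting argument is airtight.
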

\begin{proof}
It suffices to consider the case $n=1$ (putting $T=\frac{1}{n} \sum_{i=1}^n T_i$). Note also that by Lemma \ref{Ar1} the last statement in the lemma will follow once we prove the rest. Denote by $P$ the orthogonal projection onto $C$, and recall its action on $\xi \in \Hil$ is characterised by the following conditions: $P\xi \in C$ and  for all $\eta \in C$ we have $\textup{Re}\, (\la \eta - P \xi, \xi - P\xi \rangle) \leq 0$.

We will use the following quadratic form fact (Theorem 9.1.5 of \cite{Arlect} or Theorem 2.2 of \cite{Ouh} -- in the context of quantum Dirichlet forms the Hilbert space projection language was introduced in \cite{Cip}): it suffices to prove that for all $\xi \in \Hil$ we have
$\textup{Re}\, Q(P\xi, \xi-P\xi)\geq 0$, where $Q(\xi,\eta) := \la \xi, A \eta\ra$ for all $\xi, \eta \in \Hil$ (see the discussion in Section \ref{Dirichlet}).
We have however
\begin{align*} \textup{Re}\, Q(P\xi, \xi-P\xi) = \textup{Re}\, \la P\xi, (I-T) (\xi - P\xi) \ra = \textup{Re}\, \la P\xi - TP\xi,  \xi - P\xi \ra \geq 0,
\end{align*}
as $TP\xi \in C$.

\end{proof}

The following proposition is key for the main theorem of this section.

\begin{propn} \label{HilProp}
Let $\Hil$ be a separable Hilbert space, $(C_i)_{i \in \Ind}$ a family of closed convex sets in $\Hil$, and $(T_n)_{n=1}^{\infty}$ a family of operators on $\Hil$ satisfying the following conditions:
\begin{rlist}
\item each $T_n$ is a self-adjoint contraction;
\item for all $i \in \Ind$, $n \in \bn$ we have $T_n (C_i) \subset C_i$;
\item for each $\xi\in \Hil$ we have $\lim_{n\to \infty} T_n \xi = \xi$.
\end{rlist}
Then there exists a $C_0$-semigroup $\{S_t: t\geq 0\}$ of self-adjoint contractions leaving each of the sets $C_i$-invariant.
Moreover if each of the original $T_n$ is compact we can assume that $\{S_t: t\geq 0\}$ is immediately compact (recall this means that $S_t$ is compact for each $t>0$)
\end{propn}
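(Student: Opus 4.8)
The plan is to realise $\{S_t\}$ as $\{e^{-tA}\}$ for a suitable nonnegative self-adjoint operator $A$ built as a weighted sum of the $I_{\Hil}-T_n$, and to read off all three required properties from the way $A$ is assembled. First I would reduce to a convenient subsequence: fixing a countable dense sequence $(e_m)_{m=1}^\infty$ in $\Hil$ (separability enters here), condition (iii) gives $\langle (I_{\Hil}-T_n)e_m, e_m\rangle \to 0$ as $n\to\infty$ for each fixed $m$, so by a diagonal argument I may pass to a subsequence, still denoted $(T_n)$, with $\langle(I_{\Hil}-T_n)e_m,e_m\rangle\le 2^{-n}$ for all $m\le n$. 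Then I fix weights, say $a_n=n$, and define the nonnegative quadratic form $q(\xi)=\sum_{n=1}^\infty a_n\langle(I_{\Hil}-T_n)\xi,\xi\rangle$ (note $I_{\Hil}-T_n\ge 0$ since $T_n$ is a self-adjoint contraction) with domain $\mathcal D(q)=\{\xi: q(\xi)<\infty\}$; its partial sums $q_N(\xi)=\sum_{n=1}^N a_n\langle(I_{\Hil}-T_n)\xi,\xi\rangle$ are bounded nonnegative forms increasing to $q$. The subexponential growth of $a_n$ against the $2^{-n}$ decay forces each $e_m$ into $\mathcal D(q)$, so $\mathcal D(q)$ is dense, while the monotone convergence theorem for quadratic forms shows $q$ is closed; I let $A\ge 0$ be the associated self-adjoint operator and set $S_t=e^{-tA}$, a $C_0$-semigroup of self-adjoint contractions.

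For the invariance of the convex sets I would argue through the truncations. Writing $s_N=\sum_{n=1}^N a_n$ and $\tilde T_N = s_N^{-1}\sum_{n=1}^N a_n T_n$, the operator $\tilde T_N$ is a convex combination of the $T_n$, hence a self-adjoint contraction with $\tilde T_N(C_i)\subset C_i$, and $A_N := \sum_{n=1}^N a_n(I_{\Hil}-T_n)=s_N(I_{\Hil}-\tilde T_N)$. Lemma \ref{bddgen}, applied to the single contraction $\tilde T_N$ with time parameter $t s_N$, then gives $e^{-tA_N}(C_i)\subset C_i$ for all $t\ge 0$ and all $i\in\Ind$. Since $q_N\uparrow q$, the monotone convergence theorem yields strong resolvent convergence $A_N\to A$, and hence, by the Trotter--Kato theorem, $e^{-tA_N}\to e^{-tA}=S_t$ strongly for each $t\ge 0$; as each $C_i$ is closed, passing to the limit gives $S_t(C_i)\subset C_i$.

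Finally, for the immediate compactness in the case that every $T_n$ is compact, I would show that $A$ has compact resolvent, i.e. that the form-bounded set $B=\{\xi: \|\xi\|^2+q(\xi)\le 1\}$ is precompact. Given $(\xi_k)\subset B$, pass to a weakly convergent subsequence $\xi_k\rightharpoonup\xi$. For each $n$ the bound $a_n\langle(I_{\Hil}-T_n)\xi_k,\xi_k\rangle\le 1$ gives $\langle T_n\xi_k,\xi_k\rangle\ge\|\xi_k\|^2-1/a_n$; fixing $\epsilon>0$ and choosing $n$ with $1/a_n<\epsilon$, compactness of $T_n$ yields $\langle T_n\xi_k,\xi_k\rangle\to\langle T_n\xi,\xi\rangle\le\|\xi\|^2$, whence $\limsup_k\|\xi_k\|^2\le\|\xi\|^2+\epsilon$. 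Letting $\epsilon\to 0$ and combining with weak lower semicontinuity of the norm gives $\|\xi_k\|\to\|\xi\|$, so $\xi_k\to\xi$ in norm and $B$ is precompact. Thus $(I_{\Hil}+A)^{-1}$ is compact, $A$ has discrete spectrum tending to $\infty$, and $S_t=e^{-tA}$ is compact for every $t>0$.

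I expect the density of $\mathcal D(q)$ to be the main obstacle: as the example of a single sequence decaying like $1/n$ shows, no choice of weights $a_n\to\infty$ (which the compactness argument forces) can keep a prescribed vector in the form domain in general, so the passage to a subsequence with geometrically controlled decay is essential and is the one genuinely delicate point. The invariance and the immediate compactness then follow cleanly from Lemma \ref{bddgen} together with the monotone convergence of forms, and from the precompactness estimate, respectively.
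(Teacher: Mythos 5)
Your proof is correct, but it takes a genuinely different route from the paper's. The paper follows Jolissaint--Martin \cite{JM}: it first replaces $(T_n)$ by a mutually commuting family (strong limits of the resolvents $\lambda(\lambda I+\Delta_n)^{-1}$, where $\Delta_n=n(I-\theta_n)$, $\theta_n=\frac{1}{n}(T_1+\cdots+T_n)$), then realises $S_t$ as a strong limit of $\exp(-t\Delta_n)$, using commutativity for the semigroup law and explicit norm estimates such as $\|(I-\theta_n)S_{t,n}\|\le K/\sqrt{nt}$ to carry compactness to the limit. You instead assemble the generator in one stroke as the closed form $q=\sum_n n\langle(I-T_n)\,\cdot\,,\,\cdot\,\rangle$ after a diagonal subsequence extraction guaranteeing a dense form domain (this is where separability enters, and you correctly single it out as the delicate point); invariance of the $C_i$ comes from Lemma \ref{bddgen} applied to the truncations $A_N=s_N(I-\tilde T_N)$ combined with Kato's monotone convergence theorem and Trotter--Kato, and immediate compactness from the compact-resolvent criterion via your weak-to-norm convergence argument -- each of these steps is sound. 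What each approach buys: yours is shorter, avoids the commutation step and the Jolissaint--Martin estimates entirely, and directly produces a generator with compact resolvent, i.e.\ discrete spectrum $\lambda_n\to\infty$, which is exactly the form in which the paper later exploits the semigroup (Theorem \ref{compgen}, Corollary \ref{Cor:formcompact}) and essentially gives the Dirichlet form of Section \ref{Sect=Dirichlet} as an explicit series; the price is heavier standard machinery (monotone convergence of forms, Trotter--Kato, the compact-resolvent characterisation). The paper's route uses only resolvent identities and elementary estimates quoted from \cite{JM}, requires no passage to a subsequence, and its constructions visibly commute with matrix liftings, which is what makes the extension to Proposition \ref{HilPropMatrix} a one-line remark; your construction lifts to matrix levels just as well (the truncations, the form and the resolvent convergence all respect tensoring with $M_k$), but this would need to be said explicitly for the later application.
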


\begin{proof}
The proof follows very closely these of \cite{JM} and \cite{Sau}, so we just indicate the main steps. We will also assume that the family $(C_i)_{i \in \Ind}$ consists of a single set $C$ (it will be clear that the same proof applies in general) and write $I$ for $I_{\Hil}$.

In the first step one shows (as in Theorem 1 of \cite{JM}) that without loss of generality we may assume that all maps $T_n$ mutually commute (replacing them by a family $(\wt{T}_n)_{n\in \bn}$ satisfying the same conditions plus the commutation requirement). To that end we first choose a dense subset $(\xi_l)_{l=1}^{\infty}$ in the unit ball of $\Hil$ and, by passing to a subsequence if necessary, assume that for all $n\in \bn$ and $\xi \in \Lin \{T_{j}^k (\xi_l): j,l=1,\ldots, n-1, k=0,1,\ldots, n^2\}$ we have
$ \|T_n \xi - \xi \|\leq 2^{-n} \|\xi\|.$ Then define (again for each $n \in \bn$) $\theta_n:= \frac{1}{n} (T_1+\cdots +T_n)$, $\Delta_n:= n(I-\theta_n)$.
Lemma \ref{bddgen} implies that for each $\lambda>0$ the element $R_{n,\lambda}:= \lambda(\lambda I+\Delta_n)^{-1}\in B(\Hil)$ is a self-adjoint contraction with  $R_{n,\lambda}(C)\subset C$.
Further we show by explicit estimates that for each $l\in \bn$ and $\lambda>0$ the sequence $(R_{n, \lambda} \xi_l)_{n=1}^{\infty}$ converges. This follows exactly as on pages 43--44 of \cite{JM}. The density argument allows us to define for each $\lambda >0$ a self-adjoint contraction $\rho_{\lambda}\in B(\Hil)$ as a strong limit:
\[ \rho_{\lambda}(\xi) = \lim_{n\to \infty} R_{n, \lambda^{-1}} \xi, \;\;\; \xi \in \Hil.\]
It is obvious that $\rho_{\lambda}(C) \subset C$.
Again explicit calculations using the resolvent formula (p.44 of \cite{JM}) show that for each $\xi \in \Hil$, $\mu, \nu >0$
\[ \lim_{\lambda \to 0^+} \rho_{\lambda} (\xi) = \xi, \;\;\; \lambda \rho_{\lambda} - \mu \rho_{\mu} = (\mu - \lambda) \rho_{\lambda} \rho_{\mu},\]
so putting $\wt{T_n} = \rho_{\frac{1}{n}}$ for each $n \in \bn$ we obtain the required mutually commuting maps satisfying the assumptions (i)-(iii).
We still need to argue that if the original maps $T_n$ were compact, the same will be true for $\wt{T_n}$. To that end, we define for all $n,m\in \bn, n<m$ the operator
$\Delta_{n,m}= \Delta_m - \Delta_n$, and follow the arguments on page 45 of \cite{JM} to obtain the following statements:
for each $n \in \bn$, $\lambda>0$ and $\xi \in \Hil$ the sequence $((I + \frac{\lambda}{n \lambda +1} \Delta_{n,m})^{-1}\xi)_{m=n+1}^{\infty}$ converges, so that we can define a new operator, say $\gamma_{\lambda,n}\in B(\Hil)$ as the strong limit of the sequence  $((I + \frac{\lambda}{n \lambda +1} \Delta_{n,m})^{-1})_{m=n+1}^{\infty}$. Finally putting ($n \in \bn$, $\lambda>0$)
\[ \psi_{n, \lambda}:=\theta_n (I + \lambda \Delta_n)^{-1} \gamma_{\lambda,n} + \frac{\lambda}{n \lambda +1} (\rho_{\lambda} - (I + \lambda \Delta_n)^{-1}) \theta_n \gamma_{\lambda, n}\]
we observe that if each $T_n$ is compact, so is $\theta_n$, and thus also, by the above formula $\psi_{n, \lambda}$. Finally, by the computations on pages 45-46 of \cite{JM} we obtain ($n \in \bn$, $\lambda>0$)
\[\|\rho_{\lambda} - \psi_{n, \lambda}\| \leq \frac{2}{n \lambda}.\]
This suffices to conclude that each $\rho_{\lambda}$ (and thus also each $\wt{T}_n$) is compact.

The second part of the proof follows the lines of Lemma 2 of \cite{JM}. We begin with a sequence $(T_n)_{n=1}^{\infty}$ of mutually commuting maps satisfying (i)-(iii) above. Fix a dense set $(\xi_l)_{l=1}^{\infty}$ in the unit ball of $\Hil$. Possibly passing to a subsequence, we can assume this time that for each $l\in \bn$ we have $\sum_{n=1}\|T_n(\xi_l) - \xi_l\| < \infty$. Define once again for $n, m \in \bn, n <m$, the maps $\theta_n:= \frac{1}{n} (T_1+\cdots T_n)$, $\Delta_n:= n(I-\theta_n)$, $\Delta_{n,m}= \Delta_m - \Delta_n$. Further for each $n \in \bn,$ $t\geq 0$ let $S_{t,n}= \exp(-t \Delta_n)$. Then each $\{S_{t,n}:t\geq 0\}$ is a family of self-adjoint contractions and Lemma \ref{bddgen} implies that $S_{t,n}(C) \subset C$. As $T_n$ are assumed to mutually commute, so do $\Delta_n$, so that we have for $m>n, t\geq 0$
the equality $S_{t,m} = \exp(-t\Delta_{n,m}) S_{t,n}$.  It is then easy to check that for each $l \in \bn$ the sequence $(S_{t,n} \xi_l)_{n=1}^{\infty}$ is convergent, so that further by a density argument we can define for $t\geq 0$ a self-adjoint contraction $S_t$ as a strong limit of $(S_{t,n})_{n=1}^{\infty}$. It is clear that $S_t (C)\subset C$. Further, as the arguments on page 40 of \cite{JM} show, $\{S_t:t\geq 0\}$ is a $C_0$-semigroup.

It remains thus to observe that if each of the original $T_n$ is compact, so are $S_t$ for $t>0$. This follows from the estimates obtained in \cite{JM} (pages 41-42): there exists $K>0$ such that for each $t>0$, $n \in \bn$
\[ \|(I - \theta_n)S_t \| \leq \|(I - \theta_n)S_{t,n}\| \leq \frac{K}{\sqrt{nt}},\]
so $S_t = \lim_{n\to \infty}  \theta_n S_t$, and the latter maps are clearly compact (as each $\theta_n$ is).

\end{proof}

For an operator $T$ on a Hilbert space $\Hil$ and $k \in \bn$ we denote by $T^{(k)}$ the natural matrix type lifting (which can be viewed also as tensoring) of $T$ to an operator on $M_{k} (\Hil) \approx \Hil \ot M_{k}$, where $M_{k}$ is viewed as the Hilbert space formed by Hilbert-Schmidt operators on $\bc^{k}$. The following generalization of the last proposition is now straightforward.

\begin{propn} \label{HilPropMatrix}
Let $\Hil$ be a separable Hilbert space, let $(k_i)_{i \in \Ind}$ be a collection of positive integers and for each $i\in \Ind$ let  $C_i \subset M_{k_i} (\Hil)$ be a closed convex set. Let $(T_n)_{n=1}^{\infty}$ be a family of operators on $\Hil$ satisfying the following conditions:
\begin{rlist}
\item each $T_n$ is a self-adjoint contraction;
\item for all $i \in \Ind$, $n \in \bn$ we have $T_n^{(k_i)} (C_i) \subset C_i$;
\item for each $\xi\in \Hil$ we have $\lim_{n\to \infty} T_n \xi = \xi$.
\end{rlist}
Then there exists a $C_0$-semigroup $\{S_t: t\geq 0\}$ of self-adjoint contractions leaving each of the sets $C_i$-invariant (by which we mean $S_t^{(k_i)} (C_i) \subset C_i$).
Moreover if each of the original $T_n$ is compact we can assume that $\{S_t: t\geq 0\}$ is immediately compact.
\end{propn}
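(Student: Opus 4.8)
The plan is to re-run the construction of Proposition \ref{HilProp} \emph{verbatim} on $\Hil$, producing a single $C_0$-semigroup $\{S_t:t\geq 0\}$ of self-adjoint contractions, and then to verify the matrix-level invariance $S_t^{(k_i)}(C_i)\subset C_i$ for each $i$ separately by transporting the argument to the Hilbert space $\Hil_i:=M_{k_i}(\Hil)$. The crucial observation is that the assignment $T\mapsto T^{(k_i)}$ is nothing but $T\mapsto T\otimes I_{M_{k_i}}$, hence a unital $*$-homomorphism $B(\Hil)\to B(\Hil_i)$. In particular it preserves self-adjointness and contractivity, it is multiplicative, and it commutes with the formation of resolvents (since $(\lambda I+\Delta)^{(k_i)}=\lambda I_{\Hil_i}+\Delta^{(k_i)}$ is invertible with inverse $((\lambda I+\Delta)^{-1})^{(k_i)}$) and of exponentials, $\exp(-t\Delta)^{(k_i)}=\exp(-t\Delta^{(k_i)})$.

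Next I would note that the construction of $S_t$ in Proposition \ref{HilProp} uses only hypotheses (i) and (iii), which concern the operators $T_n$ on $\Hil$ alone and are independent of the index $i$; the convex sets entered only at the stage of checking invariance. Thus the very same strong limits of convex combinations, resolvents and exponentials of the $T_n$ define $\{S_t:t\geq 0\}$, and the immediate compactness of each $S_t$ (for $t>0$) when the $T_n$ are compact is obtained exactly as before, since this is a statement about operators on $\Hil$ in which the matrices play no role. It therefore remains only to verify the invariance condition at each matrix level.

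Fixing $i\in\Ind$ and passing to $\Hil_i=M_{k_i}(\Hil)$, one uses that $M_{k_i}$ is finite-dimensional, so that strong convergence of a bounded sequence in $B(\Hil)$ is preserved under $T\mapsto T\otimes I_{M_{k_i}}$; hence $T_n^{(k_i)}\to I_{\Hil_i}$ strongly by (iii), each $T_n^{(k_i)}$ is a self-adjoint contraction leaving $C_i$ invariant by (i)--(ii), and, by the homomorphism property above, $S_t^{(k_i)}$ is precisely the strong limit, through the identical chain of operations, of the corresponding expressions in the $T_n^{(k_i)}$. Running the invariance part of the proof of Proposition \ref{HilProp} inside $\Hil_i$ -- that is, applying Lemma \ref{bddgen} to the self-adjoint contractions $T_n^{(k_i)}$ and the closed convex set $C_i$ to see that the lifted resolvents $\lambda(\lambda I_{\Hil_i}+\Delta_n^{(k_i)})^{-1}$ and exponentials $\exp(-t\Delta_n^{(k_i)})$ map $C_i$ into itself -- together with the closedness of $C_i$ (which guarantees that the relevant strong limits remain in $C_i$) yields $S_t^{(k_i)}(C_i)\subset C_i$.

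The proof is thus essentially bookkeeping, and I do not expect a genuine obstacle. The only points requiring attention are that the matrix lifting is a $*$-homomorphism which is strongly continuous on bounded sets and therefore intertwines every step of the construction, and that hypothesis (ii) is used solely through Lemma \ref{bddgen} on the enlarged Hilbert spaces $\Hil_i$ -- precisely where the finite-dimensionality of $M_{k_i}$ makes the transfer of strong convergence automatic. No step of Proposition \ref{HilProp} needs to be altered, only reinterpreted one matrix level up.
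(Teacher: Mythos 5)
Your proposal is correct and takes essentially the same approach as the paper: the paper's entire proof of Proposition \ref{HilPropMatrix} is the one-line observation that all the constructions (and norm estimates, for fixed $k$) in Proposition \ref{HilProp} are unchanged under the matrix lifting $T \mapsto T^{(k)}$, which is exactly the point you develop in detail. Your elaboration — that the lifting is a unital $*$-homomorphism commuting with resolvents, exponentials and (by finite-dimensionality of $M_{k_i}$) strong limits, so that hypothesis (ii) feeds into Lemma \ref{bddgen} at each matrix level while the compactness assertion lives entirely on $\Hil$ — is a faithful and accurate expansion of the paper's argument.
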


\begin{proof}
It remains to observe that all the constructions (and norm estimates, for a fixed $k$) do not change under passing to  under tensoring/matrix lifting to $M_k \ot \Hil$.
\end{proof}

We are ready for the main result of this section. The implication (ii)$\Longrightarrow$(i) in the theorem below is immediate,  the point lies in the possibility of constructing the approximating semigroup out of the approximating sequence. Note that this is precisely the situation in the case of $L^{\infty}(\hQG)$, where $\QG$ is a discrete quantum group with the Haagerup property, as can be deduced from Theorem 7.18 of \cite{DawFimSkaWhi}  -- the approximating semigroup of maps can  in that case be built of multipliers associated to  states on $C_u(\hQG)$ forming a convolution semigroup of states.

\begin{tw} \label{Thm:Semigroup}
Consider a pair $(\mlg, \varphi)$ of a von Neumann algebra with a faithful normal state. The following are equivalent:
\begin{rlist}
\item $(\mlg, \varphi)$ has the Haagerup property;
\item there exists an immediately $L^2$-compact KMS-symmetric Markov semigroup $\{\Phi_t:t\geq 0\}$ on $\mlg$.
\end{rlist}
\end{tw}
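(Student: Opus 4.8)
The plan is to prove the two implications separately, with essentially all of the content concentrated in (i)$\Longrightarrow$(ii); the reverse implication I would dispose of first. For (ii)$\Longrightarrow$(i), given an immediately $L^2$-compact KMS-symmetric Markov semigroup $\{\Phi_t:t\ge 0\}$, I would simply extract the sequence $(\Phi_{1/n})_{n=1}^{\infty}$. Each $\Phi_{1/n}$ is a KMS-symmetric Markov map whose KMS $L^2$-implementation $\Phi_{1/n}^{(2)}$ is compact; moreover the $\sigma$-weak continuity of $t\mapsto \Phi_t$ at $0$ together with the remarks ending Section \ref{Sect=HaagerupLp} shows that $\Phi_{1/n}^{(2)}\to I_{\ltwoH}$ weakly, and since these are contractions with weak limit the identity the convergence is in fact strong. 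This is precisely condition (iv) of Theorem \ref{Thm=Equiv}, so $(\mlg,\varphi)$ has the Haagerup property.

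For (i)$\Longrightarrow$(ii) the strategy is to reduce everything to the Hilbert-space statement of Proposition \ref{HilPropMatrix} and then re-lift the resulting semigroup to $\mlg$ via Lemma \ref{KMS2Msemigroup}. First I would invoke Theorem \ref{Thm=Equiv}(iv) to obtain a sequence $(\Phi_k)$ of KMS-symmetric Markov maps whose KMS $L^2$-implementations $T_k:=\Phi_k^{(2)}$ are compact and converge strongly to $I_{\ltwoH}$. By construction each $T_k$ is a KMS-symmetric $L^2$-Markov operator in the sense following Lemma \ref{KMS2M}: self-adjoint, a contraction, completely positive, and with $T_k(\hsq)=\hsq$. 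Next I would fix the family of closed convex sets for Proposition \ref{HilPropMatrix}: for each $k\in\bn$ let $\mathfrak{P}_k\subset M_k(\ltwoH)\cong L^2(M_k(\mlg),\mathrm{tr}_k\ot\varphi)$ be the self-dual positive cone of $M_k(\mlg)$ in standard form. Complete positivity of each $T_n$ in the sense of \cite{OkaTom} says exactly that $T_n^{(k)}(\mathfrak{P}_k)\subset\mathfrak{P}_k$ for every $k$, so hypotheses (i)--(iii) of Proposition \ref{HilPropMatrix} are met; as the $T_n$ are compact, the proposition produces an immediately compact $C_0$-semigroup $\{S_t:t\ge 0\}$ of self-adjoint contractions with $S_t^{(k)}(\mathfrak{P}_k)\subset\mathfrak{P}_k$ for all $k$.

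It then remains to check that each $S_t$ is a KMS-symmetric $L^2$-Markov operator, so that Lemma \ref{KMS2Msemigroup} applies. Self-adjointness and complete positivity are built in. The key remaining point is that $S_t(\hsq)=\hsq$: this is not encoded by a convex set but is preserved automatically, since $\theta_n:=\frac{1}{n}\sum_{i\le n}T_i$ fixes $\hsq$, whence $\Delta_n=n(I-\theta_n)$ satisfies $\Delta_n\hsq=0$, and therefore every resolvent $(\lambda I+\Delta_n)^{-1}$, every exponential $\exp(-t\Delta_n)$, and all their strong limits in the construction of Proposition \ref{HilProp} fix $\hsq$. Granting this, the order-interval condition of Lemma \ref{KMS2M} is then automatic: if $0\le\xi\le\hsq$ then $S_t\xi\ge 0$ by cone-invariance, while $\hsq-S_t\xi=S_t(\hsq-\xi)\ge 0$, so $0\le S_t\xi\le\hsq$. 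Hence each $S_t$ satisfies the hypotheses of Lemma \ref{KMS2M}, and Lemma \ref{KMS2Msemigroup} furnishes a KMS-symmetric Markov semigroup $\{\Phi_t\}$ with $\Phi_t^{(2)}=S_t$; immediate $L^2$-compactness of the semigroup is exactly the immediate compactness of $\{S_t\}$.

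I expect the main obstacle to be the bookkeeping in this last stage rather than any new estimate: one must make sure that the matrix-level cones $\mathfrak{P}_k$ are genuinely compatible both with the amplification $T\mapsto T^{(k)}$ used in Proposition \ref{HilPropMatrix} and with the notion of complete positivity of \cite{OkaTom}, and one must verify that the Markov normalisation $S_t\hsq=\hsq$ survives the two-stage limiting procedure (first the commutativity reduction producing the $\wt{T}_n$, then the exponential construction producing $S_t$). Once these compatibilities are in place, the heavy analytic work — building a strongly continuous, immediately compact semigroup out of the approximating sequence while respecting all the convex constraints — is already carried by Proposition \ref{HilPropMatrix}, following \cite{JM} and \cite{Sau}.
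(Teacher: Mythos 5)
Your proof is correct and follows essentially the same route as the paper: reduce via Theorem \ref{Thm=Equiv}(iv) to a sequence of KMS-symmetric $L^2$-Markov operators with compact implementations, feed these into Proposition \ref{HilPropMatrix}, and lift the resulting semigroup back to $\mlg$ by Lemma \ref{KMS2Msemigroup}; your treatment of (ii)$\Longrightarrow$(i) is the standard contraction argument that the paper dismisses as clear. The one point where you diverge is the handling of the Markov normalisation. The paper includes the singleton $C_{-1}=\{\hsq\}$ and the order interval $C_0=\{\xi\in\ltwoH:\ 0\leq\xi\leq\hsq\}$ among the closed convex sets handed to Proposition \ref{HilPropMatrix}, alongside the matrix-level positive cones, so that $S_t\hsq=\hsq$ and the order-interval hypothesis of Lemma \ref{KMS2M} come out of the proposition used as a black box. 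You instead feed in only the positive cones and then verify $S_t\hsq=\hsq$ by reopening the two-stage construction inside Proposition \ref{HilProp} (checking that $\Delta_n\hsq=0$ propagates through resolvents, exponentials and strong limits); this verification is correct, and so is your derivation of the order-interval condition from cone invariance together with unit preservation. The cost is that your argument depends on the internals of the proposition rather than its statement, whereas the singleton $\{\hsq\}$ is a perfectly good closed convex set and invariance of an arbitrary family of such sets is exactly what the proposition was designed to deliver -- so the paper's choice of convex sets makes the final assembly purely formal.
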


\begin{proof}
As mentioned above, the implication (ii)$\Longrightarrow$(i) is clear. Assume then that $(\mlg, \varphi)$ has the  Haagerup property and let $(\Phi_n)_{n\in \bn}$ be the approximating KMS-symmetric Markov maps, which are $L^2$-compact, whose existence is guaranteed by Theorem \ref{Thm=Equiv} (iv). Apply Proposition \ref{HilPropMatrix} to the Hilbert space $\ltwoH$, the family of maps $(\Phi_n^{(2)})_{n\in \bn}$, and the closed convex sets $C_{-1}=\{\hsq\}$, $C_0=\{\xi \in \ltwoH: 0 \leq \xi \leq \hsq\}$,  $C_k = P^{(k)}$, where $k \in \bn$ and $P^{(k)}$ is the positive cone in $M_k \ot \Hil$. The proposition allows us to conclude the existence of an immediately compact $C_0$-semigroup built of KMS-symmetric $L^2$-Markov operators. Lemma \ref{KMS2Msemigroup} ends the proof.
\end{proof}

It is worth noting that KMS-symmetric Markov semigroups extend automatically to all Haagerup $L^p$-spaces (see for example \cite{GL1}). One can also define naturally a Haagerup approximation property  for $L^p(\cM, \varphi)$. This was done in \cite{OkaTom2}, where the authors showed also that $\cM$ has the Haagerup property if and only if so does any (equivalently, so do all) of the associated $L^p(\cM, \varphi)$. 

We end this section by stating the following corollary, which is essentially based on a remark of S.\,Neshveyev. Recall that a faithful normal state $\varphi$ on a von Neumann algebra is called \emph{almost periodic} if and only if the associated modular operator $\nabla_{\varphi}$ on $L^2(\mlg, \varphi)$ is diagonalizable.

\begin{cor} \label{modularperiodic}
Let $\mlg$ be a von Neumann algebra (with a separable predual) which has the Haagerup property and let $\varphi$ be a faithful normal state on $\mlg$. Then  $(\mlg, \varphi)$ has the modular Haagerup property if and only if $\varphi$ is almost periodic.    
\end{cor}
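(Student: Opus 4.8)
The plan is to prove the two implications separately, working throughout on $\Hil := L^2(\mlg,\varphi)$ with the unitary group $\nabla_\varphi^{it}$ that implements $\sigma^\varphi_t$. The one fact I would record at the outset is that a map $\Phi$ commuting with $\sigma^\varphi$ has coinciding GNS and KMS $L^2$-implementations, and that these intertwine the modular group: by functoriality of the implementation together with $(\sigma^\varphi_t)^{(2)} = \nabla_\varphi^{it}$, commutation $\Phi\circ\sigma^\varphi_t = \sigma^\varphi_t\circ\Phi$ yields $\Phi^{(2)}\nabla_\varphi^{it} = \nabla_\varphi^{it}\Phi^{(2)}$ for all $t\in\br$.

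First I would treat the implication that the modular Haagerup property forces $\varphi$ to be almost periodic, which is the quick half. Suppose $(\Phi_k)$ are Markov maps commuting with $\sigma^\varphi$ whose $L^2$-implementations $T_k := \Phi_k^{(2)}$ are compact and converge strongly to the identity. By the intertwining relation each $T_k$ commutes with $\nabla_\varphi^{it}$, hence so does $T_k T_k^\ast$, which is a compact positive operator. Writing $\Hil_{\mathrm{pp}}$ for the closed span of the eigenvectors of $\nabla_\varphi$, I observe that every nonzero spectral projection of $T_k T_k^\ast$ is finite-dimensional and $\nabla_\varphi^{it}$-invariant; since a one-parameter unitary group on a finite-dimensional space is diagonalizable, the range of each such projection lies in $\Hil_{\mathrm{pp}}$. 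Therefore $\overline{\mathrm{ran}\,T_k} = \overline{\mathrm{ran}\,T_k T_k^\ast} \subseteq \Hil_{\mathrm{pp}}$, so $T_k\xi \in \Hil_{\mathrm{pp}}$ for every $\xi\in\Hil$. Letting $k\to\infty$ and using $T_k\xi\to\xi$ together with closedness of $\Hil_{\mathrm{pp}}$ gives $\xi\in\Hil_{\mathrm{pp}}$; hence $\Hil_{\mathrm{pp}} = \Hil$ and $\nabla_\varphi$ is diagonalizable.

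For the converse I would follow R.~Tomatsu's remark after Proposition \ref{qgroupmodular}: when $\nabla_\varphi$ is diagonalizable, Lemme 3.7.3 of \cite{ConnesThesis} shows that the modular action factorizes through a compact abelian group, so there is a compact group $G$, a strongly continuous unitary representation $g\mapsto U_g$ of $G$ on $\Hil$, and a continuous homomorphism $\iota\colon\br\to G$ with dense range such that $\nabla_\varphi^{it} = U_{\iota(t)}$. Each $U_g$ fixes $D_\varphi^{\frac{1}{2}}$, preserves the order structure of $\Hil$, and commutes with $\nabla_\varphi^{it}$ since $G$ is abelian. Starting from KMS-symmetric $L^2$-Markov operators $T_k$ provided by Theorem \ref{Thm=Equiv}(iv), I would form the Haar averages $\tilde T_k := \int_G U_g T_k U_g^\ast\,dg$. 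One checks that $\tilde T_k$ is again a KMS-symmetric $L^2$-Markov operator (it is self-adjoint, fixes $D_\varphi^{\frac{1}{2}}$, and sends the order interval $[0,D_\varphi^{\frac{1}{2}}]$ into itself because $U_g$ and $T_k$ do), that it commutes with $\nabla_\varphi^{it}$, and that $\tilde T_k\to I$ strongly by bounded convergence, since $\|(T_k-I)U_g^\ast\xi\|\to 0$ pointwise in $g$ and is dominated by $2\|\xi\|$. Lemma \ref{KMS2M} then yields KMS-symmetric Markov maps $\tilde\Phi_k$ with $\tilde\Phi_k^{(2)} = \tilde T_k$, and as $\tilde T_k$ commutes with $\nabla_\varphi^{it}$ these maps commute with $\sigma^\varphi$, witnessing the modular Haagerup property.

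The step I expect to require the most care is the compactness of the averaged operators $\tilde T_k$. Here I would argue that $g\mapsto U_g T_k U_g^\ast$ is norm-continuous: strong continuity of $U$ together with compactness of $T_k$ forces $\|(U_g - U_{g_0})T_k\|\to 0$, because strong convergence is uniform on the precompact image of the unit ball under $T_k$, and a symmetric argument (passing to adjoints, using compactness of $T_k^\ast$) handles the factor $U_g^\ast$. Consequently the Haar average is a genuine Bochner integral over the compact group $G$, and being a norm-limit of Riemann sums of compact operators it is itself compact. The remaining verifications — complete positivity, normality and the Markov conditions for $\tilde\Phi_k$ — are either immediate from Lemma \ref{KMS2M} or routine averaging arguments, and I would relegate them to a brief check.
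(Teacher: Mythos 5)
Your proof is correct, and both halves take routes genuinely different from the paper's. For the implication ``modular Haagerup property $\Rightarrow$ almost periodic'' the paper goes through the semigroup machinery: it asserts that the constructions of Section \ref{Sect=Semig} preserve commutation with the modular group, obtains from a ``modular'' version of Theorem \ref{Thm:Semigroup} an immediately $L^2$-compact KMS-symmetric Markov semigroup commuting with $\sigma^\varphi$, and then applies Theorem \ref{compgen}: the generator of the $L^2$-semigroup has finite-dimensional eigenspaces $(V_n)_{n\in\bn}$ with $L^2(\mlg,\varphi)=\bigoplus_n V_n$, each preserved by $\nabla_\varphi^{it}$, whence $\nabla_\varphi$ is diagonalizable. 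Your argument reaches the same conclusion directly from the approximating sequence, with no semigroup at all: the nonzero spectral projections of the compact positive operators $T_kT_k^*$, which commute with $\nabla_\varphi^{it}$, are finite-dimensional and invariant, so $\overline{\textup{ran}}\,T_k\subseteq \Hil_{\mathrm{pp}}$, and strong convergence $T_k\to I$ forces $\Hil_{\mathrm{pp}}=\Hil$. This is more elementary and, notably, bypasses the paper's unproved ``it is easy to see'' claim about modularity of the Section \ref{Sect=Semig} constructions. For the converse the paper simply cites Theorem 4.14 of \cite{OkaTom}; you instead execute in detail the averaging argument that the paper only sketches after Proposition \ref{qgroupmodular}, the key technical point being that $g\mapsto U_gT_kU_g^*$ is norm-continuous because $T_k$ and $T_k^*$ are compact, so the Haar average is a Bochner integral and remains compact. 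The only items you leave implicit are routine but worth recording: that each $U_g$ fixes $D_\varphi^{\frac{1}{2}}$ and preserves the positive cone follows from the corresponding properties of $\nabla_\varphi^{it}$ by strong continuity and density of $\iota(\br)$ in $G$; and complete positivity (in the sense of \cite{OkaTom}, needed to invoke Lemma \ref{KMS2M}) survives averaging because $1\otimes U_g$ preserves the matrix-level cones, the modular group of $\textup{tr}\otimes\varphi$ being $\id\otimes\sigma^\varphi_t$. In exchange for length, your proof is self-contained where the paper's relies on an external citation and an unverified assertion.
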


\begin{proof}
The backward implication, based on Lemme 3.7.3 in \cite{ConnesThesis} and described already after Proposition \ref{qgroupmodular}, is Theorem 4.14 in \cite{OkaTom}.
Assume then that  $(\mlg, \varphi)$ has the modular Haagerup property. It is easy to see that the construction in this Section preserve the commutation with the modular group, so a `modular' version of Theorem \ref{Thm:Semigroup} yields the existence of an immediately $L^2$-compact KMS-symmetric Markov semigroup $\{\Phi_t:t\geq 0\}$ such that each $\Phi_t$ commutes with the modular group. Passing to the $L^2$-picture we obtain an immediately compact semigroup $\{\Phi_t^{(2)}:t\geq 0\}$ which commutes with the unitary operators $\nabla_{\varphi}^{it}$ for each $t\in \br$. But then the generator of $\{\Phi_t^{(2)}:t\geq 0\}$ has finite dimensional eigenspaces $(V_n)_{n \in \bn}$ such that $L^2(\mlg, \phi)= \bigoplus_{n \in \bn} V_n$ (see Theorem \ref{compgen} in the following section) and each $\nabla_{\varphi}^{it}$ must preserve every $V_n$. This clearly implies that $\nabla_{\varphi}$ is diagonalizable.
\end{proof}

\section{Haagerup property via Dirichlet forms} \label{Dirichlet} \label{Sect=Dirichlet}

In this section we reformulate the statements obtained in Section \ref{Sect=Semig}. Begin by quoting the following standard/folklore result, whose detailed proof is available for example in Section 1.4 of \cite{Arlect} (we formulate a slightly different version, suited directly for our purposes).

\begin{tw} \label{compgen}
Let $\Hil$ be an (infinite-dimensional for simplicity of formulation) separable Hilbert space, and $\{T_t:t \geq 0\}$ a $C_0$-semigroup of self-adjoint contractions on $\Hil$ with the generator $-A$ (so that $A$ is a closed, self-adjoint positive operator on $\Hil$). Then the following conditions are equivalent:
\begin{rlist}
\item each $T_t$ with $t>0$ is compact;
\item there exists an orthonormal basis $(e_n)_{n\in \bn}$ in $\Hil$ and a non-decreasing sequence of non-negative numbers $(\lambda_n)_{n\in \bn}$ such that $\lim_{n\to \infty} \lambda_n = + \infty$ and
    \[ A e_n = \lambda_n e_n, \;\;\; n \in \bn.\]
\end{rlist}
\end{tw}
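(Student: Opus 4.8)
The plan is to reduce everything to the spectral theorem for the positive self-adjoint operator $A$. Since $\{T_t:t\geq 0\}$ is a $C_0$-semigroup of self-adjoint contractions, its generator $-A$ is self-adjoint and, as recorded in the statement, $A$ is closed, self-adjoint and positive; by the Borel functional calculus we may write $A=\int_{[0,\infty)}\lambda\,dE(\lambda)$ for a projection-valued spectral measure $E$, and correspondingly $T_t=e^{-tA}=\int_{[0,\infty)}e^{-t\lambda}\,dE(\lambda)$ for every $t\geq 0$. Condition (ii) is then precisely the assertion that $E$ is purely atomic with finitely many atoms (counted with multiplicity) in each bounded interval, so the whole argument consists in translating compactness of the $T_t$ into this structural property of $E$.

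The implication (ii)$\Longrightarrow$(i) is immediate: if $Ae_n=\lambda_n e_n$ with $\lambda_n\to\infty$, then in the basis $(e_n)$ the operator $T_t$ is diagonal with entries $e^{-t\lambda_n}$, and for $t>0$ these tend to $0$; hence $T_t$ is the norm limit of its finite-rank truncations and is therefore compact.

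For the converse (i)$\Longrightarrow$(ii), which is the heart of the matter, I would fix a single $t_0>0$ and exploit compactness of $T_{t_0}$. The key step, and the one I expect to carry the real content, is to show that for every $a>0$ the spectral projection $E([0,a])$ has finite rank. To see this, note that on the subspace $M_a:=\operatorname{Ran}E([0,a])$ one has, for $\xi\in M_a$,
\[ \langle T_{t_0}\xi,\xi\rangle=\int_{[0,a]}e^{-t_0\lambda}\,d\langle E(\lambda)\xi,\xi\rangle\geq e^{-t_0 a}\|\xi\|^2, \]
so $T_{t_0}$ is bounded below by the positive constant $e^{-t_0 a}$ on $M_a$. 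A compact operator cannot be bounded below on an infinite-dimensional subspace: if $(\xi_k)$ were an orthonormal sequence in $M_a$, then $\xi_k\to 0$ weakly, so $\|T_{t_0}\xi_k\|\to 0$ by compactness, contradicting $\langle T_{t_0}\xi_k,\xi_k\rangle\geq e^{-t_0 a}$. Hence $\dim M_a<\infty$.

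Once this is established, the conclusion is routine. The spectrum of $A$ below any level $a$ then consists of finitely many eigenvalues of finite multiplicity, so (using $E([0,\infty))=I$, valid since $A\geq 0$) the space $\Hil$ splits as the orthogonal direct sum of the finite-dimensional eigenspaces of $A$, with the distinct eigenvalues accumulating only at $+\infty$. Listing these eigenvalues in non-decreasing order with multiplicity as $(\lambda_n)$ and choosing an orthonormal basis of each eigenspace yields an orthonormal basis $(e_n)$ of $\Hil$ with $Ae_n=\lambda_n e_n$, $\lambda_n\geq 0$ and $\lambda_n\to+\infty$, which is exactly (ii). The only point requiring genuine care is the ``bounded below forces finite dimension'' argument above; everything else is bookkeeping with the spectral measure.
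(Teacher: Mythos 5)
Your proof is correct, but it takes a genuinely different route from the paper's. The paper handles (i)$\Longrightarrow$(ii) by first invoking the semigroup-theoretic fact (Lemma 4.28 in Engel--Nagel) that immediate compactness of $\{T_t\}$ forces the resolvent of $-A$ to be compact, and then applying the spectral theorem for \emph{compact} self-adjoint operators to a single resolvent operator $R_{\lambda}=\lambda(\lambda I+A)^{-1}$, recovering the eigenbasis of $A$ from that of $R_\lambda$. You instead work directly with the spectral measure $E$ of $A$: the key observation that $\langle T_{t_0}\xi,\xi\rangle \geq e^{-t_0 a}\|\xi\|^2$ on $\operatorname{Ran}E([0,a])$, combined with the standard fact that a compact operator cannot be bounded below on an infinite-dimensional subspace (orthonormal sequences go weakly to zero), shows that every spectral projection $E([0,a])$ has finite rank, after which the eigenbasis and the divergence $\lambda_n \to +\infty$ follow by bookkeeping. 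What your approach buys: it is self-contained, avoiding the citation to resolvent-compactness results from semigroup theory, and it makes the mechanism explicit; the only black box is the spectral theorem for unbounded self-adjoint operators, which you need anyway to write $T_t=e^{-tA}$. What the paper's approach buys: granted the cited lemma, it reduces the statement to the most familiar form of the spectral theorem and is shorter to state. Your (ii)$\Longrightarrow$(i) direction coincides with the paper's, and all steps in your argument are sound.
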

\begin{proof}
We sketch the proof: the implication (ii)$\Longrightarrow$(i) is immediate, as then we get an explicit spectral decomposition, $T_t = \sum_{n\in \bn} \exp(-t\lambda_n) |e_n \rangle \langle e_n |$.
The other implication is based on the following steps: first it is easy to see that (i) implies that the resolvent of $-A$ must be compact (Lemma 4.28 in \cite{Nagel}). Then we just use the standard spectral theorem for a self-adjoint compact operator to one of the operators $R_{\lambda} = \lambda (\lambda I + A)^{-1}$ in the resolvent (note that the resolvent consists of self-adjoint operators), and use the fact that it determines $A$ uniquely.
\end{proof}

For the general theory of closed quadratic forms we refer for example to \cite{Ouh}, and for quantum Dirichlet forms to \cite{GL1} and \cite{GL2}. We recall here the main results.

\begin{deft}
A non-negative closed densely defined quadratic form on a complex Hilbert space $\Hil$ is a map $Q:\Dom \, Q \to \br_+$ such that $\Dom\, Q$ is a dense subspace of $\Hil$, $Q$ is a quadratic form (i.e.\ there exists a sesquilinear form $\wt{Q}:\Dom\, Q \times \Dom \,Q \to \bc$ such that for all $\xi, \eta \in \Dom \, Q$ we have $\wt{Q}(\xi, \eta) = \overline{\wt{Q}( \eta, \xi)}$ and $Q(\xi) = \wt{Q}(\xi, \xi)$) and the space $\Dom \,Q$ equipped with the norm $\|\xi\|_{Q}:=\sqrt{\|\xi\|^2 + Q(\xi)}$ is complete.
\end{deft}

The following is a symmetric version of theorems in Section 1.5 of \cite{Ouh}.

\begin{tw} \label{formgen}
Let $\Hil$ be a Hilbert space.  There is a one-to-one correspondence between $C_0$-semigroups of self-adjoint contractions and non-negative closed densely defined quadratic forms on $\Hil$. Given a semigroup $\{P_t:t\geq 0\}$ the corresponding form $Q$ is given by the formula
\[  \Dom \, Q =\left\{ \xi \in \Hil: \sup_{t>0} \{\frac{1}{t} \la \xi, \xi - P_t \xi \ra\} < \infty \right\},\]
\[ Q(\xi) = \lim_{t\to 0^+} \frac{1}{t} \la \xi, \xi - P_t \xi \ra, \;\;\; \xi \in \Dom\, Q.\]
\end{tw}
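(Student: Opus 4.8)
The plan is to factor the asserted bijection through the classical one-to-one correspondence between non-negative self-adjoint operators and each of the two objects in the statement. \textbf{First} I would observe that if $-A$ generates a $C_0$-semigroup $\{P_t:t\geq 0\}$ of self-adjoint contractions, then $A$ is automatically a non-negative self-adjoint operator: self-adjointness of each $P_t$ forces self-adjointness of the generator, and by the spectral theorem $P_t=e^{-tA}$, so the contractivity $\|P_t\|\leq 1$ is equivalent to $\sigma(A)\subseteq[0,\infty)$, i.e.\ $A\geq 0$. Conversely, every non-negative self-adjoint $A$ yields such a semigroup via functional calculus, $P_t:=e^{-tA}$. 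This establishes a bijection between self-adjoint contraction $C_0$-semigroups and non-negative self-adjoint operators, which is just the spectral theorem in disguise.

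\textbf{Next} I would invoke the representation theorem for closed forms, in the self-adjoint case recorded in Section 1.5 of \cite{Ouh}: to a non-negative self-adjoint operator $A$ one associates the form $Q(\xi)=\|A^{\frac12}\xi\|^2$ with $\Dom\, Q=\Dom(A^{\frac12})$, which is non-negative, densely defined, and closed precisely because $A^{\frac12}$ is a closed operator; and this assignment $A\mapsto Q$ is a bijection onto all non-negative closed densely defined quadratic forms, with $A$ recovered from $Q$ uniquely. Composing the two bijections gives the claimed one-to-one correspondence between semigroups and forms.

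\textbf{Finally} I would extract the explicit formulae by spectral calculus. Writing the spectral resolution $A=\int_{[0,\infty)}\lambda\,dE_\lambda$ and the finite spectral measure $\mu_\xi(\cdot)=\langle E_{(\cdot)}\xi,\xi\rangle$, one has for every $\xi\in\Hil$
\[
\frac{1}{t}\langle \xi,\xi-P_t\xi\rangle=\int_{[0,\infty)}\frac{1-e^{-t\lambda}}{t}\,d\mu_\xi(\lambda).
\]
The integrand increases monotonically to $\lambda$ as $t\downarrow 0^+$ (equivalently, $s\mapsto\frac{1-e^{-s}}{s}$ is decreasing on $(0,\infty)$), so by the monotone convergence theorem the supremum over $t>0$ coincides with the limit as $t\to 0^+$, and both equal $\int_{[0,\infty)}\lambda\,d\mu_\xi(\lambda)=\|A^{\frac12}\xi\|^2=Q(\xi)$ whenever this quantity is finite; moreover its finiteness is exactly the condition $\xi\in\Dom(A^{\frac12})=\Dom\,Q$. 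This reproduces both the domain description and the value of $Q$ in the statement.

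The only non-elementary ingredient, and hence the main point to handle with care, is the form representation theorem guaranteeing that \emph{every} non-negative closed densely defined form arises from a unique non-negative self-adjoint operator; in the symmetric setting this is considerably easier than the general sectorial case of \cite{Ouh}, since $A^{\frac12}$ (and thence $A$) can be reconstructed directly from $Q$, but it is where the bijectivity and uniqueness genuinely reside. The monotonicity estimate underpinning the explicit formula, by contrast, is entirely elementary.
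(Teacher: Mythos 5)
Your proposal is correct, and it is essentially the argument the paper implicitly relies on: the paper gives no proof of this theorem at all, stating it as "a symmetric version of theorems in Section 1.5 of \cite{Ouh}", and those results are proved precisely by factoring both correspondences through the non-negative self-adjoint operator $A$ (semigroup $P_t=e^{-tA}$ on one side, Kato-type representation $Q(\xi)=\|A^{\frac12}\xi\|^2$, $\Dom Q=\Dom(A^{\frac12})$ on the other). Your spectral-calculus verification of the explicit domain and limit formulae via monotone convergence is sound, and you correctly identify the representation theorem for closed symmetric forms as the only genuinely non-elementary ingredient.
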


The above theorem and Theorem \ref{compgen} yield immediately the following corollary.

\begin{cor} \label{Cor:formcompact}
Let $\Hil$ be an (infinite-dimensional for simplicity of formulation) separable Hilbert space, and let $Q$ be a non-negative closed densely defined quadratic form on $\Hil$, with the corresponding $C_0$-semigroup $\{T_t:t \geq 0\}$. Then the following conditions are equivalent:
\begin{rlist}
\item each $T_t$ with $t>0$ is compact;
\item there exists an orthonormal basis $(e_n)_{n\in \bn}$ in $\Hil$ and a non-decreasing sequence of non-negative numbers $(\lambda_n)_{n\in \bn}$ such that $\lim_{n\to \infty} \lambda_n = + \infty$, $\Dom \, Q = \{\xi \in \Hil: \sum_{n=1}^{\infty} \lambda_n |\la e_n, \xi  \ra|^2 < \infty\}$, and for $\xi \in \Dom \, Q$
    \[ Q(\xi) = \sum_{n=1}^{\infty} \lambda_n |\la e_n, \xi \ra|^2.\]
\end{rlist}
\end{cor}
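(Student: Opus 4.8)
The plan is to deduce this corollary by combining the two preceding results, namely Theorem \ref{formgen} (the correspondence between $C_0$-semigroups of self-adjoint contractions and non-negative closed densely defined quadratic forms) and Theorem \ref{compgen} (the spectral characterisation of immediate compactness for such semigroups). Since Corollary \ref{Cor:formcompact} is stated precisely for a form $Q$ \emph{together with} its corresponding semigroup $\{T_t:t\geq 0\}$, the bridge between the two theorems is already in place: the form $Q$ and the semigroup $\{T_t\}$ are two encodings of the same self-adjoint positive generator $A$, via $-A$ being the generator of $\{T_t\}$.

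First I would invoke Theorem \ref{compgen} directly for the semigroup $\{T_t:t\geq 0\}$ with generator $-A$. This gives at once the equivalence of condition (i) (each $T_t$ with $t>0$ compact) with the existence of an orthonormal basis $(e_n)_{n\in\bn}$ of eigenvectors $A e_n = \lambda_n e_n$ with $\lambda_n$ non-decreasing, non-negative, and tending to $+\infty$. So the only remaining task is to translate the latter spectral statement into the explicit description of $\Dom\,Q$ and of $Q$ given in condition (ii) of the corollary. In other words, assuming the eigendecomposition of $A$, I would verify that
\[
\Dom\,Q = \Big\{\xi \in \Hil: \sum_{n=1}^{\infty} \lambda_n |\la e_n,\xi\ra|^2 < \infty\Big\}, \qquad Q(\xi) = \sum_{n=1}^{\infty} \lambda_n |\la e_n,\xi\ra|^2.
\]
This is the standard identification of a closed non-negative quadratic form with the form of its associated self-adjoint operator: one has $\Dom\,Q = \Dom\, A^{1/2}$ and $Q(\xi) = \|A^{1/2}\xi\|^2$, and the spectral theorem for $A$ (diagonal in the basis $(e_n)$ with eigenvalues $\lambda_n$) expresses $\Dom\,A^{1/2}$ and $\|A^{1/2}\xi\|^2$ exactly by the stated series. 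Alternatively, and perhaps more in keeping with the formulation of Theorem \ref{formgen}, I would compute directly from the formula there: for $\xi = \sum_n c_n e_n$ one has $\frac{1}{t}\la \xi, \xi - T_t\xi\ra = \sum_n \frac{1-e^{-t\lambda_n}}{t}|c_n|^2$, and since $\frac{1-e^{-t\lambda_n}}{t}$ increases monotonically to $\lambda_n$ as $t\to 0^+$, monotone convergence identifies both the finiteness condition defining $\Dom\,Q$ and the limiting value of $Q(\xi)$ with the claimed series.

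I do not expect any serious obstacle here; the result is genuinely a corollary, and the phrase ``yield immediately'' in the excerpt signals that only an assembly of the two quoted theorems is intended. The one point requiring a sentence of care is the matching of the two parametrisations of the generator $A$ — ensuring that the \emph{same} operator $A$ (and hence the same eigendata $(e_n,\lambda_n)$) appears in Theorem \ref{formgen} and in Theorem \ref{compgen}. Since both theorems describe the generator of the given semigroup $\{T_t\}$, this is automatic, but I would state it explicitly to make the deduction watertight. The verification of the series formula, whether via $A^{1/2}$ or via the monotone limit in the defining formula of Theorem \ref{formgen}, is routine and I would present it in a single short paragraph.
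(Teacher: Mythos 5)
Your proposal is correct and is precisely the paper's argument: the paper gives no separate proof, stating only that Theorem \ref{formgen} and Theorem \ref{compgen} ``yield immediately'' the corollary, which is exactly the assembly you carry out. Your extra verification (identifying $\Dom\,Q$ and the series formula via $\Dom\,A^{1/2}$, or via the monotone limit $\frac{1}{t}\la \xi,\xi-T_t\xi\ra \to \sum_n \lambda_n|\la e_n,\xi\ra|^2$) simply fills in the routine details the paper leaves implicit.
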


Before we define (quantum) Dirichlet forms, we need to introduce some notations. For a selfadjoint $\xi \in \ltwoH_h$ we denote by $\xi_+$ the positive part of $\xi$ and further write $\xi_{\wedge}=\xi - (\xi - \hsq)_+$ (note that the operations $\xi \mapsto \xi_+$ and $\xi \mapsto \xi_{\wedge}$ correspond to taking orthogonal projections from $\ltwoH_h$ respectively onto the positive cone  and onto the set $\{\eta \in \ltwoH: \eta = \eta^*, \eta \leq \hsq\}$).

\begin{deft}
A non-negative closed densely defined quadratic form $Q$ on $\ltwoH$ is called a \emph{conservative, Dirichlet form}, if it satisfies the following conditions:
\begin{rlist}
\item $Q$ is real, i.e.\ $\Dom \, Q$ is $^*$-invariant and $Q(\xi) = Q(\xi^*)$ for $\xi \in \Dom \, Q$;
\item $\hsq \in \Dom \,Q $ and  $Q(\hsq) =0$;
\item for each self-adjoint $\xi \in \Dom \,Q $ we have $\xi_+, \xi_{\wedge} \in \Dom \,Q $ and $Q(\xi_+)\leq Q(\xi)$, $Q(\xi_{\wedge}) \leq Q(\xi)$.
\end{rlist}
It is called \emph{completely Dirichlet}, if the counterparts of conditions in (i)-(iii) are also satisfied for each $n \in \bn$ by the natural matrix lifting of $Q$, i.e. the quadratic forms $Q^{(n)}$ on $M_n \ot \ltwoH$ (with the domain $M_n (\Dom \, Q)$), defined by
\[ Q^{(n)} ([\xi_{ij}]_{i,j=1}^n ) = \sum_{i,j=1}^n Q(\xi_{ij}), \;\;\; \xi_{ij} \in \Dom \, Q, \, i,j=1,\ldots, n.\]
\end{deft}

The following result is contained in \cite{GL1} in the context of the Haagerup $L^2$-space. It was independently obtained in \cite{Cip} in the context of an arbitrary standard form; it can be viewed as a non-commutative version of the Beurling-Deny criteria and can be deduced from the interplay between semigroups, their generating forms, and closed convex sets (see Chapter 2 of \cite{Ouh}). Note that in fact \cite{Cip} shows also in Proposition 4.10 that the first property in condition (iii) above (the one related to $\xi_+$) follows automatically from the second (the one related to $\xi_{\wedge}$).

\begin{tw} \label{Thm:Dirichletsemig}
Let $(\mlg, \varphi)$ be a von Neumann algebra with a faithful normal state $\varphi$. There is a one-to-one correspondence between $KMS$-symmetric Markov semigroups on $\mlg$ and conservative completely Dirichlet forms on $\ltwoH$.
\end{tw}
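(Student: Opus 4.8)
The plan is to combine the Hilbert-space correspondence of Theorem \ref{formgen} with the dictionary between $L^2$-Markov operators and Markov maps furnished by Lemma \ref{KMS2Msemigroup}. Given a KMS-symmetric Markov semigroup $\{\Phi_t:t\geq 0\}$ we pass to $\{T_t := \Phi_t^{(2)}:t\geq 0\}$, which is a $C_0$-semigroup of self-adjoint contractions on $\ltwoH$ (self-adjointness is precisely KMS-symmetry, and strong continuity follows from the considerations ending Section \ref{Sect=HaagerupLp}); conversely Lemma \ref{KMS2Msemigroup} recovers a KMS-symmetric Markov semigroup from any $C_0$-semigroup of symmetric $L^2$-Markov operators. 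Since Theorem \ref{formgen} sets up a bijection between non-negative closed densely defined forms $Q$ and $C_0$-semigroups of self-adjoint contractions, the statement will follow once we show that, under this bijection, the semigroups consisting of symmetric $L^2$-Markov operators correspond exactly to the conservative completely Dirichlet forms.

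The core of the argument is to translate each defining property of a symmetric $L^2$-Markov operator into a property of the generating form $Q$ using the invariance criterion for closed convex sets (Theorem 2.2 of \cite{Ouh}, already invoked in Lemma \ref{bddgen}; see also \cite{Cip}). The involution-invariance $T_t(\xi^*)=(T_t\xi)^*$, automatic for implementations of $*$-preserving maps, corresponds to the reality condition (i). Invariance of the positive cone $\{\xi\in\ltwoH_h:\xi\geq 0\}$, whose metric projection sends a self-adjoint $\xi$ to $\xi_+$, is equivalent via the criterion $\textup{Re}\,\wt{Q}(\xi_+,\xi-\xi_+)\geq 0$ to the first part of (iii), namely $\xi_+\in\Dom Q$ and $Q(\xi_+)\leq Q(\xi)$. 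Likewise invariance of $\{\eta\in\ltwoH_h:\eta\leq\hsq\}$, whose projection is $\xi\mapsto\xi_{\wedge}$, gives $Q(\xi_{\wedge})\leq Q(\xi)$; together these encode that $0\leq\xi\leq\hsq$ implies $0\leq T_t\xi\leq\hsq$. Finally, conservativeness $T_t(\hsq)=\hsq$ says exactly that $\hsq$ is a fixed vector of the contraction semigroup, which by the formula of Theorem \ref{formgen} is equivalent to $\hsq\in\Dom Q$ and $Q(\hsq)=0$, i.e. condition (ii).

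Running these equivalences in both directions yields the correspondence at the scalar level: from $\{\Phi_t\}$ we obtain a conservative Dirichlet form $Q$, while from such a $Q$ we get via Theorem \ref{formgen} a $C_0$-semigroup of self-adjoint contractions which, by the same equivalences, consists of symmetric $L^2$-Markov operators, so that Lemma \ref{KMS2Msemigroup} produces a KMS-symmetric Markov semigroup. These two assignments are mutually inverse because both $\{\Phi_t\}\mapsto\{\Phi_t^{(2)}\}$ and $\{T_t\}\mapsto Q$ are bijective on the relevant classes. To upgrade ``Markov/Dirichlet'' to ``completely Markov/completely Dirichlet'' one repeats the entire discussion for the matrix liftings $T_t^{(n)}$ on $M_n\ot\ltwoH$ and the forms $Q^{(n)}$, using that complete positivity of $\{\Phi_t\}$ is equivalent to invariance of each matricial positive cone $P^{(n)}$ under $T_t^{(n)}$; this is exactly parallel to the scalar case and introduces no new idea.

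The main obstacle is the rigorous Beurling--Deny step: showing that invariance of the two convex sets --- the positive cone and the order interval determined by $\hsq$ --- under the self-adjoint contraction semigroup is equivalent to the two form inequalities. This rests on the identification of the metric projections onto these sets as $\xi\mapsto\xi_+$ and $\xi\mapsto\xi_{\wedge}$ (recorded just before the definition of Dirichlet form) together with the Ouhabaz invariance theorem, and some care is needed because the positive cone is a genuine non-affine convex cone whereas $\{\eta\leq\hsq\}$ is affine. Both of these facts are, however, available off the shelf from \cite{Ouh}, \cite{Cip} and \cite{GL1}, so that the proof ultimately amounts to an assembly of Lemma \ref{KMS2Msemigroup}, Theorem \ref{formgen} and these invariance results rather than a fresh computation.
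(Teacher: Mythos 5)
Your proposal is correct and takes essentially the same route as the paper: the paper does not prove Theorem \ref{Thm:Dirichletsemig} from scratch but attributes it to \cite{GL1} and \cite{Cip}, remarking that the bijection ``arises in the expected way'' by combining Theorem \ref{formgen} with Lemma \ref{KMS2Msemigroup} and that the Markov--Dirichlet translation can be deduced from the interplay between semigroups, generating forms and closed convex sets (Chapter 2 of \cite{Ouh}). Your Beurling--Deny-style dictionary --- matching $*$-invariance with reality, invariance of the positive cone and of $\{\eta\leq\hsq\}$ with the two form inequalities via the metric projections $\xi\mapsto\xi_+$ and $\xi\mapsto\xi_\wedge$, and $T_t(\hsq)=\hsq$ with condition (ii), then lifting to matrix levels --- is precisely the content of those cited results, so your assembly coincides with the paper's intended proof.
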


The bijection above arises in the expected way: the quadratic form in question is the generating form (in the sense of Theorem \ref{formgen}) for the relevant semigroup of operators on $\ltwo$, connected to the semigroup on $\mlg$ via Lemma \ref{KMS2Msemigroup}.

We are ready to formulate the main result of this section. Note that once again quantum groups may be viewed as a guiding example:  convolution semigroups of states on the algebra of functions on a compact quantum group $\QG$ on one hand yield multipliers which can be used to prove the Haagerup property for the von Neumann algebra $L^{\infty}(\QG)$, and on the other hand lead to interesting quantum Dirichlet forms, studied recently in \cite{FabioUweAnna}.

\begin{tw} \label{Thm:DirichletHAP}
Consider a pair $(\mlg, \varphi)$. The following are equivalent:
\begin{rlist}
\item $(\mlg, \varphi)$ has the Haagerup property;
\item  $L^2(\mlg, \varphi)$  admits an orthonormal basis $(e_n)_{n\in \bn}$ and a non-decreasing sequence of non-negative numbers $(\lambda_n)_{n\in \bn}$ such that $\lim_{n\to \infty} \lambda_n = + \infty$  and the prescription
    \[ Q(\xi) = \sum_{n=1}^{\infty} \lambda_n |\langle e_n, \xi\rangle|^2, \;\;\; \xi \in \tu{Dom}\, Q,\]
where $\tu{Dom}\, Q=\{ \xi \in L^2(\mlg, \varphi):  \sum_{n=1}^{\infty} \lambda_n |\langle e_n, \xi\rangle|^2 < \infty\}$,   defines a conservative completely Dirichlet form.
    \end{rlist}
\end{tw}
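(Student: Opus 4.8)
The plan is simply to chain together three results already established: the semigroup characterisation of the Haagerup property (Theorem \ref{Thm:Semigroup}), the correspondence between KMS-symmetric Markov semigroups and conservative completely Dirichlet forms (Theorem \ref{Thm:Dirichletsemig}), and the spectral description of immediate compactness for the generating form (Corollary \ref{Cor:formcompact}). The point is that all three concern one and the same object, the $C_0$-semigroup $\{\Phi_t^{(2)}:t\geq 0\}$ of self-adjoint contractions on $\ltwoH$, regarded in turn through its von Neumann algebraic lift, its generating quadratic form, and its compactness properties.

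To prove (i)$\Longrightarrow$(ii), I would assume that $(\mlg,\varphi)$ has the Haagerup property. By Theorem \ref{Thm:Semigroup} there is an immediately $L^2$-compact KMS-symmetric Markov semigroup $\{\Phi_t:t\geq 0\}$ on $\mlg$; by Theorem \ref{Thm:Dirichletsemig} it corresponds to a conservative completely Dirichlet form $Q$ on $\ltwoH$, namely the generating form (in the sense of Theorem \ref{formgen}) of the $C_0$-semigroup $\{\Phi_t^{(2)}\}$. Immediate $L^2$-compactness of $\{\Phi_t\}$ means exactly that each $\Phi_t^{(2)}$ with $t>0$ is compact, so the implication (i)$\Longrightarrow$(ii) of Corollary \ref{Cor:formcompact} furnishes an orthonormal basis $(e_n)_{n\in\bn}$ and a non-decreasing sequence $(\lambda_n)_{n\in\bn}$ with $\lambda_n \to +\infty$ for which $Q$ takes the displayed diagonal form. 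This is precisely condition (ii).

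For the converse (ii)$\Longrightarrow$(i), I would start from a form $Q$ of the stated diagonal shape which is conservative and completely Dirichlet. Theorem \ref{Thm:Dirichletsemig} associates to it a KMS-symmetric Markov semigroup $\{\Phi_t\}$ on $\mlg$ whose $L^2$-implementation $\{\Phi_t^{(2)}\}$ is exactly the $C_0$-semigroup generated by $Q$. The diagonal shape with $\lambda_n \to +\infty$ activates the implication (ii)$\Longrightarrow$(i) of Corollary \ref{Cor:formcompact}, so each $\Phi_t^{(2)}$ with $t>0$ is compact, i.e.\ the semigroup $\{\Phi_t\}$ is immediately $L^2$-compact. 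Theorem \ref{Thm:Semigroup} then yields the Haagerup property.

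No substantial analytic difficulty remains, since all the hard work is carried by the results of Section \ref{Sect=Semig} and by Corollary \ref{Cor:formcompact}. The only point demanding care is the bookkeeping: one must verify that the three correspondences really refer to the same $C_0$-semigroup $\{\Phi_t^{(2)}\}$ on $\ltwoH$, and in particular that \emph{immediately $L^2$-compact} in the sense of Theorem \ref{Thm:Semigroup} coincides with compactness of the operators $T_t$ ($t>0$) appearing in Corollary \ref{Cor:formcompact}. Once this identification is recorded, the stated equivalence follows at once.
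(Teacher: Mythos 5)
Your proposal is correct and follows exactly the paper's own proof: the paper likewise derives the theorem by combining Theorem \ref{Thm:Semigroup}, Theorem \ref{Thm:Dirichletsemig} and Corollary \ref{Cor:formcompact}, with the first two giving the equivalence to the existence of a conservative completely Dirichlet form whose associated $C_0$-semigroup is immediately $L^2$-compact, and the last translating that compactness into the diagonal description of $Q$. Your additional bookkeeping remark about all three results referring to the same semigroup $\{\Phi_t^{(2)}\}$ is exactly the implicit identification the paper relies on.
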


\begin{proof}
It is a consequence of Theorem \ref{Thm:Semigroup}, Theorem \ref{Thm:Dirichletsemig} and Corollary \ref{Cor:formcompact}. The first two show that the Haagerup property for $\cM$ is equivalent to the existence of a conservative completely Dirichlet form $Q$ on $\ltwoH$ such that the $C_0$-semigroup of Hilbert space contractions associated to $Q$ is immediately $L^2$-compact and the last one interprets the immediate compactness of the semigroup directly in terms of $Q$.
\end{proof}

\subsection{Example}

We finish the article with an explicit description of a Dirichlet form with the properties of Theorem \ref{Thm:DirichletHAP} on a non-injective (finite) von Neumann algebra with the Haagerup approximation property. The example is based on the results of \cite{FabioUweAnna} and related to the observations in \cite{PierreRoland}.

Let $N\in \bn$, $N\geq 2$ and denote by $O_N^+$ the \emph{quantum orthogonal group of Wang} (see \cite{Wang}, and also \cite{Brannan} and references therein). The universal group $C^*$-algebra of $O_N^+$ is the universal unital $C^*$-algebra $\clg(O_N^+)$ generated by $N^2$ selfadjoint elements $\{u_{i,j}:i,j=1,\ldots, N\}$ such that the matrix $(u_{i,j})_{i,j=1,\ldots, N}$ is unitary. This algebra admits a unique bi-invariant (with respect to the natural coproduct) state $h$, which is tracial. The  von Neumann algebra associated to $O_N^+$, $L^{\infty}(O_N^+)$, is defined as the von Neumann completion of the image of $\clg(O_N^+)$ with respect to the GNS representation of $h$. Thus
$L^{\infty}(O_N^+)$ is a finite von Neumann algebra and $h$ induces a faithful tracial state on this algebra (we will denote it by the same letter).

It was proved by M.Brannan in \cite{Brannan} that $L^{\infty}(O_N^+)$ has the Haagerup approximation property -- in the language of \cite{DawFimSkaWhi} the dual of $O_N^+$ has the Haagerup property. Note that it follows from \cite{Teo} and \cite{BMT} that $L^{\infty}(O_N^+)$ is non-injective as soon as $N\geq3$. Explicit convolution semigroups of states on $\clg(O_N^+)$ were studied in Section 10 of \cite{FabioUweAnna}; in \cite{PierreRoland} it was observed explicitly that one of them has a `proper' generating functional and thus witnesses the Haagerup property of the dual of $O_N^+$. This is the basis of Proposition \ref{Prop:DirON} below. Before we formulate it we we need to introduce some more notations.

In \cite{Teo} T.\,Banica showed that the (equivalence classes of) irreducible representations of $O_N^+$ are indexed by non-negative integers; more explicitly he proved there (see also for example Section 10 of \cite{FabioUweAnna}) that if we denote for each $s\in \bn$ by $U_s$  the Chebyshev polynomial of the second kind, and put $n_s:=U_s(N)$, then there exists a linearly independent dense set $\{1, v_{i,j}^{(s)}:s \in \bn_0, i,j=1,\ldots, n_s\}$ in $\clg^*(O_N^+)$ such that the vectors $e_{0}:=\Omega_h$, $e_{i,j}^{s}:=
n_s^{-\frac{1}{2}}v_{i,j}^{(s)}\Omega_h, $ $s \in \bn_0, i,j=1,\ldots, n_s$, form an orthonormal basis in $L^2(O_N^+):= L^2(L^{\infty}(O_N^+), h)$.

\begin{propn} \label{Prop:DirON}
The following formula defines a completely conservative Dirichlet form on $L^2(O_N^+)$, satisfying the properties described in Theorem \ref{Thm:Dirichletsemig}:
\begin{equation} Q(\xi) = \sum_{s=1}^{\infty} \sum_{i,j=1}^{n_s} \frac{{U_s}'(N)}{U_s(N)}  |\langle e_{i,j}^s, \xi\rangle|^2, \;\;\; \xi \in \tu{Dom}\, Q, \label{DirON}\end{equation}
where
\[\tu{Dom}\, Q=\{ \xi \in L^2(O_N^+):   \sum_{s=1}^{\infty} \sum_{i,j=1}^{n_s} \frac{U_s(N)}{{U_s}'(N)}  |\langle e_{i,j}^s, \xi\rangle|^2 < \infty\}.\]

\end{propn}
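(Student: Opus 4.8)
The plan is to realise $Q$ as the conservative completely Dirichlet form associated, via Theorem \ref{Thm:Dirichletsemig}, to an explicit KMS-symmetric Markov semigroup on $L^{\infty}(O_N^+)$ arising from a central convolution semigroup of states, and then to read off immediate $L^2$-compactness from Corollary \ref{Cor:formcompact}. First I would invoke Section 10 of \cite{FabioUweAnna} to produce the relevant convolution semigroup of states $(\mu_t)_{t\geq 0}$ on $\clg(O_N^+)$. Since these states are central (determined by their values on the characters $\chi_s$), the associated multipliers
\[ \Phi_t(v_{i,j}^{(s)}) = c_s(t)\, v_{i,j}^{(s)}, \qquad s\in\bn_0,\ i,j=1,\ldots,n_s, \]
are unital, $h$-preserving, normal and completely positive, hence Markov; as $h$ is a trace and the $c_s(t)$ are real, each $\Phi_t^{(2)}$ is a real diagonal (hence self-adjoint) operator, so each $\Phi_t$ is KMS-symmetric. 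Thus $\{\Phi_t:t\geq 0\}$ is a KMS-symmetric Markov semigroup on $L^\infty(O_N^+)$, of the type linked to $\widehat{O_N^+}$ having the Haagerup property in \cite{DawFimSkaWhi} and \cite{Brannan}.

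The second step is to identify the eigenvalues. Writing $c_s(t)=\mu_t(\chi_s)/n_s$ (so $c_s(0)=1$ via the counit) and differentiating at $t=0$ against the generating functional $\psi=\frac{d}{dt}\big|_{0}\mu_t$, one finds $c_s(t)=e^{-t\lambda_s}$ with $\lambda_s=-\psi(\chi_s)/n_s={U_s}'(N)/U_s(N)$; this identification, the heart of the matter, is exactly what is computed in \cite{FabioUweAnna} and recorded in \cite{PierreRoland}. Consequently the $C_0$-semigroup $\{\Phi_t^{(2)}\}$ of self-adjoint contractions is diagonalised by the orthonormal basis $\{e_0\}\cup\{e_{i,j}^s\}$, with $\Phi_t^{(2)}e_0=e_0$ and $\Phi_t^{(2)}e_{i,j}^s=e^{-t\lambda_s}e_{i,j}^s$. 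By Theorem \ref{formgen} the generating quadratic form of this semigroup is precisely the diagonal form \eqref{DirON}, with $e_0=\Omega_h$ in its kernel — which realises the conservativity normalisation $Q(\Omega_h)=0$ — and by Theorem \ref{Thm:Dirichletsemig} it is automatically conservative and completely Dirichlet, since it comes from a genuine KMS-symmetric Markov semigroup.

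It then remains to verify properness, i.e. $\lambda_s\to\infty$, which by Corollary \ref{Cor:formcompact} is equivalent to immediate $L^2$-compactness of $\{\Phi_t^{(2)}\}$, each eigenvalue $\lambda_s$ occurring with finite multiplicity $n_s^2$. Here I would use that the zeros of $U_s$ are the numbers $\cos\tfrac{k\pi}{s+1}$, $k=1,\ldots,s$, all lying in $(-1,1)$, so that
\[ \lambda_s=\frac{{U_s}'(N)}{U_s(N)}=\sum_{k=1}^{s}\frac{1}{\,N-\cos\tfrac{k\pi}{s+1}\,}\geq \frac{s}{N+1}\longrightarrow\infty \quad (s\to\infty). \]
Combined with Theorem \ref{Thm:DirichletHAP}, this exhibits $Q$ as a conservative completely Dirichlet form witnessing the Haagerup property of $L^\infty(O_N^+)$. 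The only genuine obstacle is the second step: producing a bona fide convolution semigroup of \emph{states} — it is positivity, not mere quadraticity, that forces the full Dirichlet property — and pinning down its generator as ${U_s}'(N)/U_s(N)$. This is precisely the content of \cite{FabioUweAnna} and \cite{PierreRoland}, so our work reduces to transcribing it into the semigroup/Dirichlet-form dictionary of Sections \ref{Sect=Semig}--\ref{Sect=Dirichlet}; the non-injectivity of $L^\infty(O_N^+)$ for $N\geq 3$, quoted from \cite{Teo} and \cite{BMT}, then shows the example is non-trivial.
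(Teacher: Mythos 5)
Your proposal is correct, and it rests on the same two external pillars as the paper's own proof --- Section 10 of \cite{FabioUweAnna} for the relevant convolution structure on $O_N^+$, and \cite{PierreRoland} for the behaviour of the eigenvalues --- but it routes the argument differently. The paper works infinitesimally: it defines the generating functional $L$ with $L(v_{i,j}^{(s)})=-{U_s}'(N)/U_s(N)$ directly, quotes Corollary 10.3 of \cite{FabioUweAnna} to see that it is a generating functional, verifies KMS-symmetry via $L=L^{\dagger}\circ S$, and then invokes Theorem 7.1 of \cite{FabioUweAnna}, which manufactures the Dirichlet form straight from a KMS-symmetric generating functional. You instead work globally: you exponentiate to the convolution semigroup of central states, form the semigroup of multipliers $\Phi_t$, note that in this tracial setting KMS-symmetry is just self-adjointness of the diagonal $L^2$-implementations, and then feed the resulting KMS-symmetric Markov semigroup into the paper's own Theorem \ref{formgen} and Theorem \ref{Thm:Dirichletsemig}. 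What this buys is a proof whose Dirichlet-form step uses only the internal machinery of Sections \ref{Sect=Semig}--\ref{Sect=Dirichlet}, at the cost of needing the Sch\"onberg-type correspondence (generating functional $\leftrightarrow$ convolution semigroup of states), which again comes from \cite{FabioUweAnna}; the paper instead outsources the Dirichlet property itself to \cite{FabioUweAnna}. Your properness argument is genuinely more elementary than the paper's (which cites Remark 10.4 of \cite{FabioUweAnna} for $N=2$ and Lemma 4.4 of \cite{PierreRoland} for $N>2$): writing the logarithmic derivative ${U_s}'(N)/U_s(N)$ as a sum of reciprocals of distances from $N$ to the zeros of $U_s$ gives linear growth at once. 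One caveat: with the normalisation forced by $n_s=U_s(N)$ (so $U_1(x)=x$), these are the \emph{dilated} Chebyshev polynomials, whose zeros are $2\cos\tfrac{k\pi}{s+1}\in(-2,2)$ rather than $\cos\tfrac{k\pi}{s+1}\in(-1,1)$; your bound should read $\lambda_s\geq s/(N+2)$, and the argument is otherwise unaffected since all zeros lie in $(-2,2)$ and strictly below $N$. Note also that your reading of the statement --- with $\tu{Dom}\,Q$ defined by summability against the \emph{same} weights $\lambda_s={U_s}'(N)/U_s(N)$ appearing in \eqref{DirON} --- is the coherent one; the reciprocal weights in the displayed domain (and in the growth claims in the paper's proof) are evidently inverted.
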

\begin{proof}
Consider a linear, densely defined functional $L$ on $\clg(O_N^+)$ given by the formula
\[ L(1)=0, \;\;\; L(v_{i,j}^{(s)}) = -\frac{{U_s}'(N)}{U_s(N)}, \;\;\;s  \in \bn_0, i,j=1,\ldots, n_s.\]
Corollary 10.3 of \cite{FabioUweAnna} shows that it is a \emph{generating functional} on $O_N^+$ (it corresponds to $b=1$ and $\nu=0$ in that corollary). An elementary check shows that on its domain $L=L^{\dagger} \circ S$, where $S$ is the antipode of $O_N^+$, so that by Corollary 4.6 it is a $KMS$-symmetric functional. Further Theorem 7.1 of \cite{FabioUweAnna} implies that the formula \eqref{DirON} defines a Dirichlet form corresponding to the KMS-symmetric Markov semigroup of convolution operators/Schur multipliers associated with $L$ -- an apparent difference in comparison with the formulation there is related to the fact that we work with normalised eigenvectors.

The fact that the respective eigenvalues grow to infinity follows from explicit estimates: for $N=2$ we have
$\frac{U_s(2)}{{U_s}'(2)}= \frac{s(s+2)}{6}$ (Remark 10.4 of \cite{FabioUweAnna}), whereas for $N>2$ the sequence $(\frac{U_s(N)}{{U_s}'(N)} - \frac{s}{\sqrt{N^2-4}})_{s\in \bn}$ is bounded  (Lemma 4.4 of \cite{PierreRoland}).
\end{proof}

Note that as $h$ is tracial the choice of the embedding of $L^{\infty}(O_N^+)$ into $L^2(O_N^+)$ does not play any role in the above considerations.





\begin{thebibliography}{Run09xx}


\bibitem[Are]{Arlect}
W.\,Arendt, ``Heat Kernels'', ISEM 2005/06 Lectures, available at www.math.ist.utl.pt /~ czaja/ISEM/internetseminar200506.pdf

\bibitem[Ban]{Teo} T.\,Banica, Th\'eorie des repr\'esentations du groupe quantique compact libre $O(n)$, \emph{C.\,R.\,Math.\,Acad.\,Sci.\,Paris} {\bf 322} (1996), no.\,3, 241--244.

\bibitem[BMT]{BMT} E.\,B\'edos, G.J.\,Murphy and L. \,Tuset, Amenability and co-amenability of algebraic quantum groups II, \emph{J.\,Funct.\,Anal.} {\bf 201} (2003), 303–-340.


\bibitem[Boc]{Boca}
F. Boca, On the method of constructing irreducible finite index subfactors of Popa, \emph{Pacific J.\,Math.} {\bf 161} (1993), no. 2, 201--231.

\bibitem[Bra]{Brannan} M.\,Brannan, Approximation properties for free orthogonal and free unitary quantum groups, \emph{J.\,Reine Angew.\,Math.} \textbf{672} (2012), 223--251.

\bibitem[CS]{CasSka}
 M.\,Caspers and A.\,Skalski,
  The Haagerup property for arbitrary von Neumann algebras, \emph{Int.\,Math.\,Res.\,Not.}, to appear, available at
  arXiv:1312.1491.


\bibitem[COST]{COST}
 M.\,Caspers, R.\,Okayasu, A.\,Skalski and R.\,Tomatsu,
Generalisations of the Haagerup approximation property to arbitrary von Neumann algebras, \emph{C.\,R.\,Math.\,Acad.\,Sci.\,Paris} \textbf{352} (2014), no.\,6, 507–-510.


\bibitem[CCJJV]{book}
P.A.\,Cherix, M.\,Cowling, P.\,Jolissaint, P.\,Julg and A.\,Valette, {\it Groups with the Haagerup property.
Gromov's a-T-menability},  Progress in Mathematics, 197,  Basel, 2001.

\bibitem[Cho]{Cho}
  M. Choda,
  \emph{Group factors of the Haagerup type},
  Proc. Japan Acad. Ser. A Math. Sci. {\bf 59} (1983), 174--177.


\bibitem[Cip]{Cip}
F.\,Cipriani, Dirichlet forms and Markovian semigroups on standard forms of von Neumann algebras, \emph{J.\,Funct.\,Anal.} \textbf{147} (1997), no.\,2, 259–-300.


\bibitem[CFK]{FabioUweAnna}
F.\,Cipriani, U.\,Franz and A.\,Kula, Symmetries of L\'evy processes on compact quantum groups, their Markov semigroups and potential theory, \emph{J.\,Funct.\,Anal.} \textbf{266} (2014), no.\,5, 2789–-2844.

\bibitem[CiS]{CipSau}
F.\,Cipriani and J.-L. Sauvageot, Derivations as square roots of Dirichlet forms, \emph{J.\,Funct.\,Anal.} \textbf{201} (2003), no.\,1, 78–-120.

\bibitem[Con]{ConnesThesis}
 A.\,Connes,
Une classification des facteurs de type III,
\emph{Annales scientifiques de l'\'E.N.S., 4\`eme s\'erie} \textbf{6} (1973), no.\,2, 133--252.

\bibitem[DFSW]{DawFimSkaWhi}
  M.\,Daws, P.\,Fima, A.\,Skalski and S.\,White,
The Haagerup property for locally compact quantum groups,
  \emph{ J. Reine Angew. Math. (Crelle)}, to appear, arXiv:1303.3261.

\bibitem[EnN]{Nagel} K-J.Engel and R.Nagel, ``One-Parameter Semigroups for Linear Evolution Equations'', Springer.



\bibitem[FV]{PierreRoland}
P.\,Fima and R.\,Vergnioux, On a cocycle in the adjoint representation of the orthogonal free quantum groups, \emph{preprint},
  arXiv:1402.4798.


\bibitem[GL$_1$]{GL1}
S.\,Goldstein and J.M.\,Lindsay, KMS-symmetric Markov semigroups, \emph{Math. Z.} \textbf{219} (1995), no.\,4, 591–-608.

\bibitem[GL$_2$]{GL2}
S.\,Goldstein and J.M.\,Lindsay, Markov semigroups KMS-symmetric for a weight, \emph{Math.\,Ann.} \textbf{313} (1999), no.\,1, 39–-67.

\bibitem[HJX]{HaaJunXu}
  U. Haagerup, M. Junge and Q. Xu,
  A reduction method for noncommutative $L_p$-spaces and applications,
\emph{Trans.\,Amer.\,Math.\,Soc.} {\bf 362} (2010), 2125--2165.

\bibitem[HaK]{HaaKra}
 U. Haagerup and J. Kraus,
  Approximation properties for group $C^*$-algebras and group von Neumann algebras,
  \emph{Trans.\,Amer.\,Math.\,Soc.} {\bf 344} (1994), no. 2, 667--699.

\bibitem[HiK]{HK} N.\,Higson and G.\,Kasparov, $E$-theory and $KK$-theory for groups which act properly and isometrically on Hilbert space, \emph{Invent.\,Math.} \textbf{144} (2001), no.\,1, 23--74.


\bibitem[Jol]{Jol}
  P. Jolissaint,
 Haagerup approximation property for finite von Neumann algebras,
    \emph{J. Operator Theory} {\bf 48} (2002), 549--571.

 \bibitem[JM]{JM}
P.\, Jolissaint and F.\,Martin,
Algebres de von Neumann finies ayant la propri\'et\'e de Haagerup et semi-groupes $L^2$-compacts, \emph{Bull.\,Belg.\,Math.\,Soc.\,Simon Stevin} \textbf{11} (2004),
no.\,1, 35–-48.


\bibitem[Kos]{Kos}
  H. Kosaki,
  Applications of the complex interpolation method to a von Neumann algebra: noncommutative $L^{p}$-spaces,
  \emph{J. Funct. Anal.} {\bf 56} (1984), no. 1, 29--78.



\bibitem[Ku$_1$]{kus} J. Kustermans,
   Locally compact quantum groups in the universal setting,
   \emph{Internat.\,J.\,Math.} \textbf{12} (2001), no.\,3, 289--338.

\bibitem[Ku$_2$]{kus2}
J.\,Kustermans,
Locally compact quantum groups, \emph{in} ``Quantum Independent Increment Processes,
Vol. \!I: From Classical Probability to Quantum Stochastics,''
Lecture Notes in Mathematics \textbf{1865},
Springer, Heidelberg 2005.



\bibitem[OT$_1$]{OkaTom}
  R.\,Okayasu and R.\,Tomatsu,
  Haagerup approximation property for arbitrary von Neumann algebras, \emph{preprint}
  arXiv:1312.1033v4.

\bibitem[OT$_2$]{OkaTom2}
  R.\,Okayasu and R.\,Tomatsu,
  Haagerup approximation property and positive cones associated with a von Neumann algebra, \emph{preprint}
  arXiv:1403.3971v1.


\bibitem[Ouh]{Ouh}
E.M.\,Ouhabaz, `` Analysis of heat equations on domains'', London Mathematical Society Monographs Series, 31. Princeton University Press, Princeton, NJ, 2005.



\bibitem[Pop]{Po2} S.\,Popa, Deformation and rigidity for group actions and von Neumann algebras, \emph{in} `Proceedings of the International Congress of Mathematicians (Madrid 2006)', Volume I, EMS Publishing House, Zurich (2006/2007), 445--479.


\bibitem[Sau]{Sau} J.-L. Sauvageot, Strong Feller semi-groups on $C^*$-algebras, \emph{J. Operator Theory} \textbf{42} (1999), 83–-102.


\bibitem[Ta$_1$]{TakI}
 M. Takesaki,
 \emph{Theory of operator algebras I},
Springer-Verlag, New York-Heidelberg, 1979.

\bibitem[Ta$_2$]{TakII}
 M. Takesaki,
 \emph{Theory of operator algebras II},
 Springer, Berlin, 2003.

\bibitem[Te$_1$]{TerpI}
  M. Terp,
  $L^p$ spaces associated with von Neumann algebras. Notes,
  \emph{Report No. 3a + 3b}, K\o benhavns Universitets Matematiske Institut, Juni 1981.

\bibitem[Te$_2$]{TerpII}
  M. Terp,
Interpolation spaces between a von Neumann algebra and its predual,
    \emph{J.\,Operator Theory} {\bf 8} (1982), 327–-360.

\bibitem[Wan]{Wang} S.\,Wang, ``General constructions of compact quantum groups,'' Ph.D. Thesis, U. California Berkeley, 1993.


\end{thebibliography}
\end{document}